\newtheorem{theorem}{Theorem}[section]
\newtheorem{lemma}[theorem]{Lemma}
\newtheorem{proposition}[theorem]{Proposition}
\newtheorem{corollary}[theorem]{Corollary}
\theoremstyle{definition}
\newtheorem{definition}[theorem]{Definition}
\newtheorem{example}[theorem]{Example}
\theoremstyle{remark}
\newtheorem{remark}[theorem]{Remark}
\numberwithin{equation}{section}
\DeclareMathOperator{\Ext}{Ext}
\DeclareMathOperator{\gExt}{\underline{Ext}}
\DeclareMathOperator{\gr}{gr}
\DeclareMathOperator{\gldim}{gldim}
\DeclareMathOperator{\Gr}{Gr}
\DeclareMathOperator{\Grj}{Gr_J}
\DeclareMathOperator{\Hom}{Hom}
\DeclareMathOperator{\gHom}{\underline{Hom}}
\DeclareMathOperator{\im}{Im}
\DeclareMathOperator{\Ker}{Ker}
\DeclareMathOperator{\pdim}{pdim}
\begin{document}

\title{Generalized Koszul Algebra and Koszul Duality}


\author{Haonan Li}
\address{Shanghai Center for Mathematical Sciences, Fudan University, Shanghai 200433, China}
\email{17110840001@fudan.edu.cn}

\author{Quanshui Wu}
\address{School of Mathematical Sciences, Fudan University, Shanghai 200433, China}
\email{qswu@fudan.edu.cn}

\subjclass[2020]{
16S37, 
16W50, 
16L30, 
16E05, 
18G15
}

\keywords{Koszul rings, Koszul duality, Yoneda rings, semiperfect rings, AS regular algebras}

\date{}

\dedicatory{}

\begin{abstract}
We define generalized Koszul modules and rings and develop a generalized Koszul theory for $\mathbb{N}$-graded rings with the degree zero part noetherian semiperfect. This theory specializes to the classical Koszul theory for graded rings with degree
zero part artinian semisimple developed by Beilinson-Ginzburg-Soergel and the ungraded Koszul theory for noetherian semiperfect rings developed by Green and  Martin{\'e}z-Villa.
Let $A$ be a left finite  $\mathbb{N}$-graded ring  generated in degree $1$ with $A_0$ noetherian semiperfect, $J$ be its graded Jacobson radical.
By the Koszul dual of $A$ we mean the Yoneda Ext ring $\gExt_A^\bullet(A/J,A/J)$.
If $A$ is a generalized Koszul ring and $M$ is a generalized Koszul module, then it is proved that the Koszul dual of the Koszul dual of $A$ is the associated graded ring $\Grj A$ and
the Koszul dual of the Koszul dual of $M$ is the associated graded module $\Grj M$.
If $A$ is a locally finite algebra, then
the following statements are proved to be equivalent: $A$ is generalized Koszul; the Koszul dual $\gExt_A^\bullet(A/J,A/J)$ of $A$ is (classically) Koszul;   $\Grj A$ is (classically) Koszul;
the opposite ring $A^{op}$ of $A$ is generalized Koszul.
As an application, it is proved that if $A$ is generalized Koszul with finite global dimension then $A$ is generalized AS regular if and only if the Koszul dual of $A$ is self-injective.
\end{abstract}

\maketitle

\section{Introduction}
Koszul rings and Koszul duality play an important role in commutative algebra, algebraic
topology, (noncommutative) algebraic geometry, and in the theory of quantum groups, representation theory of algebras  and  Lie theory (see \cite{Pri, Lof, BGS1, Fr, CPS, Sm}, etc.).

Classically, a Koszul ring $A$ means an $\mathbb{N}$-graded ring $A=A_0\oplus A_1\oplus A_2\oplus \cdots$ with $A_0$ artinian semisimple and $A_0 \cong A/A_{\geqslant 1}$  having a linear projective resolution as a graded $A$-module (see \cite[Definition 1.2.1]{BGS1}).
Here are some major results in the classical Koszul theory. Suppose that $A$ is left finite, that is, every $A_n$ is finitely generated as an $A_0$-module, and $A_0$ is artinian semisimple. Then the following are equivalent: $A$ is Koszul; $A^{op}$ is Koszul \cite[Theorem 2.2.1]{BGS1}; the Koszul dual $\gExt^\bullet_A(A_0,A_0)$ (or the Yoneda Ext ring) of $A$ is Koszul \cite[Theorem 2.10.2]{BGS1} (see also \cite[Theorem 6.1]{GM1}); $\gExt^n_A(A_0,A_0)$ is concentrated in degree $-n$ for all $n \geqslant 0$ \cite[Proposition 2.1.3]{BGS1}; $A$ is isomorphic to the Koszul dual of the Koszul dual of $A$ \cite[Theorem 2.4]{GM2} (see also \cite[Theorems 10.1, 10.2]{GM1}, \cite[Theorem 2.10.2]{BGS1}). Moreover, there is a duality between the category of Koszul modules over a Koszul ring $A$ and the category of Koszul modules over the Koszul dual of $A$ \cite[Theorem 5.2]{GM2}.

The Koszul rings in the sense of \cite{BGS1} will be called sometimes classically Koszul later in  this paper.

There are several generalized Koszul theories in literature, where $A_0$ is not assumed being artinian semisimple.

To develop a unified approach to Koszul duality and cotilting theory, $T$-Koszul algebras were first defined in \cite{GRS} for left finite graded algebras $A$ with $A_0$ artinian, where $T$ is a Wakamatsu cotilting $A_0$-module. For locally finite graded algebra $A$ such that  $A_0$ has finite global dimension, an equivalent definition of $T$-Koszul algebras was given in \cite[Definition 4.1.1]{Ma}.
Similar to the classical Koszul theory, the following statements were proved: $A^{op}$ is $T^*$-Koszul if $A$ is $T$-Koszul \cite[Theorem 4.1.2]{Ma}; each $T$-Koszul algebra has a $T^*$-Koszul dual algebra which is the Yoneda Ext algebra of $T$ \cite[Theorem 4.2.1(a)]{Ma}; the $T^*$-Koszul dual of the $T$-Koszul dual algebra is isomorphic to the original algebra \cite[Theorem 4.2.1(b)]{Ma}; and there is a duality between categories of $T$-Koszul modules over a $T$-Koszul algebra  and its $T$-Koszul dual algebra \cite[Theorem 4.3.1]{Ma}.
If $A_0$ is artinian semisimple and $T=A_0$, then $T$-Koszul algebras are classically Koszul. $T$-Koszul theory specializes to classical Koszul
theory and to Wakamatsu tilting theory (see \cite{Ma} and the references therein).

An earlier generalization of a graded ring
$A$ being Koszul was given by Woodcock in \cite{Wo}, where $A$ is
both a left and a right projective $A_0$-module, but with no restrictions on $A_0$.
 This is not necessarily the case in the setting of \cite{GRS}.
 The point of view in \cite{Wo} is considering the Koszul complex
as the defining property of the so called Koszul modules.
However, assuming that $A$ is a
left finite graded ring generated in degree $1$ with $A_0$ artinian, and choosing $T=(A_0)^*$, the vector space dual of $A$, one can see that $A$ is $T$-Koszul in the sense of \cite{GRS}.

 To apply Koszul theory to study the Ext groups of representations of finite EI categories,  Li developed a generalized Koszul theory for locally finite $\mathbb{N}$-graded $k$-algebra $A$ with $A_0$ self-injective instead of semisimple \cite{Li1}.  The results were proved to be true more generally for $A_0$ with finitistic dimension $0$ \cite{Li2}. In fact, both finite dimensional local algebras and self-injective algebras have finitistic dimension $0$. In \cite{Li1, Li2}, generalized Koszul modules and algebras were defined via linear projective resolutions as in the classical case.
 The following results were proved in \cite[Section 1]{Li2}: if further $A$ is projective as an $A_0$-module then a graded $A$-module $M$ is generalized Koszul if and only if it is a projective $A_0$-module and  $\gExt_A^1(A_0,A_0)\cdot \gExt_A^{n-1}(M,A_0)=\gExt^{n}_A(M,A_0)$ for all $n > 0$; if $A$ is generalized Koszul then there is a duality between the generalized Koszul modules over $A$ and over $\gExt_A^{\bullet}(A_0,A_0)$ via $M \mapsto \gExt_A^{\bullet}(M,A_0)$; $A$ is generalized Koszul if and only if $A$ is  $A_0$-projective and $A/\mathfrak{R}$ is classically Koszul where $\mathfrak{R}=AJ(A_0)A$; if $A$ is $A_0$-projective, then a graded $A$-module $M$ is generalized Koszul if and only if it is  $A_0$-projective and  $M/{\mathfrak{R}M}$ is a classically Koszul $A/\mathfrak{R}$-module.

The concept of Koszul algebras has been generalized  to higher Koszul algebras \cite{Ber, GMMZ, HY, L3, LHL} and to  categories \cite{Man, BGS2, MOS}, which are not the topics in this paper.

A linear projective resolution of a graded module $M$ can be characterized by the property of (the radical filtration of) syzygies of $M$ \cite[Proposition 3.1 and Lemma 5.1]{GM1} (see Proposition \ref{qk and classical koszul}). Using this characterization,  Green and Martin{\'e}z-Villa defined (strongly) quasi-Koszul modules and rings, and developed a Koszul theory for noetherian semiperfect rings \cite{GM1,GM2}.

In this paper, we develop a generalized Koszul theory for $\mathbb{N}$-graded rings $A$ with the degree $0$ part $A_0$ noetherian semiperfect (not necessarily semisimple), which specializes to the classical graded Koszul theory if $A_0$ is artinian semisimple and the ungraded Koszul theory for noetherian semiperfect ring if $A$ is concentrated in degree $0$.
More explicitly, let $A$ be an $\mathbb{N}$-graded ring with $A_0$ noetherian semiperfect, $J=J(A_0)\oplus A_{\geqslant 1}$ be the graded Jacobson radical of $A$ and $S=A/J$.
Let $M$ be a left finite and bounded below graded $A$-module. It follows from Proposition  \ref{minimal projective resolution} that $M$ has a minimal graded projective resolution.
Following the idea in \cite{GM1}, a graded $A$-module $M$ with a minimal graded projective resolution $P_\bullet \to M \to 0$  is called Koszul (resp. quasi-Koszul) if $J^k\Ker d_n = \Ker d_n\cap J^{k+1}P_n$ for all $n,k\geqslant 0$ (resp.  $J\Ker d_n = \Ker d_n\cap J^2P_n$ for all $n$) (see Definition \ref{our-definition}).
Then, a left finite graded ring $A$ generated in degree $1$ with $A_0$ noetherian semiperfect is called a (quasi-)Koszul ring if $S \cong A_0/{J(A_0)}$ is  a  (quasi-)Koszul $A$-module. Here, we rename ``strongly quasi-Koszul" in \cite{GM1} by ``Koszul" but remain the name ``quasi-Koszul".

Let $E(A)=\gExt_A^{\bullet}(A/J,A/J)$ be the Yoneda Ext ring  of $A$.
The first result says that $A$ is a quasi-Koszul ring if and only if $E(A)$ is generated in degree $1$; and if $A$ is quasi-Koszul then $M$ is quasi-Koszul if and only if $\gExt_A^{\bullet}(M,A/J)$ is generated
in degree $0$ as an $E(A)$-module (see Theorem \ref{qK iff E(M) generated in degree 0}). Sometimes, $E(A)$ is called the Koszul dual of $A$.
\begin{theorem} \label{theorem 1}
\begin{itemize}
\item[(1)] $M$ is quasi-Koszul if and only if, for any $n>0$, $$\gExt_A^1(A/J,A/J)\cdot \gExt_A^{n-1}(M,A/J)=\gExt^n_A(M, A/J).$$
\item[(2)] $A$ is a quasi-Koszul ring if and only if $\gExt_A^{\bullet}(A/J,A/J)$ is generated by  $\gExt^1_A(A/J,A/J)$ over $\gHom_A(A/J,A/J)$.
\end{itemize}
\end{theorem}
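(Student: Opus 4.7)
Part (2) is the specialization of Part (1) to $M=A/J$, since $A$ is quasi-Koszul by definition exactly when $A/J$ is a quasi-Koszul $A$-module. The plan therefore focuses on Part (1). Fix a minimal graded projective resolution $(P_\bullet, d_\bullet)$ of $M$, whose existence is guaranteed by Proposition~\ref{minimal projective resolution}, and write $\Omega^n := \Ker d_{n-1}$ for the $n$-th syzygy, so that $\Omega^n \subseteq JP_{n-1}$ by minimality. Since any graded $A$-linear map into $A/J$ annihilates $JP_\bullet$, the complex $\gHom_A(P_\bullet, A/J)$ has zero differentials, yielding
\[
\gExt^n_A(M, A/J) \cong \gHom_A(P_n, A/J) \cong \gHom_A(\Omega^n, A/J) \cong \gHom_{A/J}(\Omega^n/J\Omega^n,\, A/J),
\]
where the last step uses semisimplicity of $A/J$ (as $A_0$ is semiperfect).

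Both conditions in Part (1) can be tested at each level $n\geq 1$. The quasi-Koszul condition $J\Omega^n = \Omega^n \cap J^2 P_{n-1}$ is equivalent to the injectivity of the natural map
\[
\iota_n\colon \Omega^n/J\Omega^n \longrightarrow JP_{n-1}/J^2 P_{n-1}
\]
induced by $\Omega^n \hookrightarrow JP_{n-1}$, since the reverse inclusion $J\Omega^n \subseteq \Omega^n \cap J^2 P_{n-1}$ is automatic. Because $A$ is the projective cover of $A/J$ (one has $\top A = A/JA = A/J$), I choose a minimal resolution $Q_\bullet\to A/J$ with $Q_0 = A$, so that $\Omega^1(A/J) = J$ and $\gExt^1_A(A/J, A/J)\cong \gHom_{A/J}(J/J^2, A/J)$.

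The Yoneda product is then described concretely: for $\beta\in \gExt^{n-1}_A(M, A/J)$ realized as $\beta\colon \Omega^{n-1}\to A/J$, extend along $P_{n-1}\twoheadrightarrow \Omega^{n-1}$ and lift through $A\twoheadrightarrow A/J$ (by projectivity of $P_{n-1}$) to obtain $\hat\beta\colon P_{n-1}\to A$. Then $\hat\beta(\Omega^n) \subseteq \hat\beta(JP_{n-1})\subseteq J$, and for $\alpha\in \gExt^1_A(A/J, A/J)$ viewed as $\alpha\colon J\to A/J$, the product $\alpha\cdot\beta\in \gExt^n_A(M, A/J)$ is represented by $\alpha\circ \hat\beta|_{\Omega^n}$. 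At the level of tops, this factors as
\[
\Omega^n/J\Omega^n \xrightarrow{\iota_n} JP_{n-1}/J^2 P_{n-1} \xrightarrow{\overline{\hat\beta}} J/J^2 \xrightarrow{\bar\alpha} A/J,
\]
so the image of the Yoneda pairing lies inside the image of
\[
\iota_n^*\colon \gHom_{A/J}(JP_{n-1}/J^2 P_{n-1},\, A/J) \longrightarrow \gHom_{A/J}(\Omega^n/J\Omega^n,\, A/J) \cong \gExt^n_A(M, A/J).
\]

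The reverse inclusion is the heart of the argument. Using flatness of $P_{n-1}$ to identify $JP_{n-1}/J^2 P_{n-1}\cong (J/J^2)\otimes_{A/J}\top P_{n-1}$, any $(A/J)$-linear $F\colon JP_{n-1}/J^2 P_{n-1}\to A/J$ decomposes as a finite sum $\sum_i \bar\alpha_i\circ\overline{\hat\beta_i}$: the projective lifting property $\gHom_A(P_{n-1}, A) \twoheadrightarrow \gHom_A(P_{n-1}, A/J)$ supplies the $\hat\beta_i$, and varying the $\alpha_i$ spans the dual $\gHom_{A/J}(J/J^2, A/J)$. Since $A/J$ is semisimple, $\iota_n^*$ is surjective if and only if $\iota_n$ is injective, matching Yoneda-surjectivity at level $n$ with the quasi-Koszul condition at level $n-1$; quantifying over $n\geq 1$ completes Part (1). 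The main obstacle is the realization of an arbitrary $F$ as a finite sum of Yoneda products, for which the semiperfect hypothesis on $A_0$ (yielding semisimple $A/J$) is essential.
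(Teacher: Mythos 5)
Your proposal is correct and follows essentially the same route as the paper's proof of this theorem: both reduce everything to the identification $\gExt^n_A(M,S)\cong\gHom_S(\Omega^n/J\Omega^n,S)$, the fact that the image of the Yoneda pairing coincides with the image of $\iota_n^*$ (Yoneda products factor through $JP_{n-1}/J^2P_{n-1}$, and conversely every functional on $JP_{n-1}/J^2P_{n-1}$ restricted along $\iota_n$ is a finite sum of Yoneda products), and the translation of quasi-Koszulity into injectivity of $\iota_n$ via semisimplicity of $S$. The one step you compress --- writing an arbitrary $F$ as a \emph{finite} sum $\sum_i\bar\alpha_i\circ\overline{\hat\beta_i}$ --- is exactly where the paper spends its care (decomposing $P_{n-1}/JP_{n-1}$ into graded simples, using left-finiteness and boundedness below to get only finitely many summands in the degree of $F$, and shifting each simple into a summand of $S$), and your sketch invokes the right ingredients for it.
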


We describe in Example \ref{qk-but-not-sqk} a quasi-Koszul ring which is not a  Koszul ring.

Koszul rings and modules are characterized in the following by using Theorem \ref{theorem 1} (see Theorem \ref{A sqk implies E(A) Koszul}).
\begin{theorem}\label{theorem 2}
\begin{itemize}
\item[(1)] $A$ is a Koszul ring if and only if $\gExt_A^\bullet(A/J,A/J)$ is a classical Koszul ring.
\item[(2)] Suppose that $A$ is Koszul. Then $M$ is a Koszul $A$-module if and only if $\gExt_A^{\bullet}(M,A/J)$ is a classical Koszul $E(A)$-module.
\end{itemize}
\end{theorem}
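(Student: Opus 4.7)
The strategy is to bootstrap from Theorem \ref{theorem 1} and exploit the stronger filtration condition $J^k\Ker d_n = \Ker d_n\cap J^{k+1}P_n$ for \emph{all} $k$ (not only $k=1$). The key preliminary observation is that under our standing hypotheses $A/J \cong A_0/J(A_0)$ is semisimple (since $A_0$ is noetherian semiperfect), hence $E(A)_0 \cong \gEnd_A(A/J) \cong \End_{A_0/J(A_0)}(A_0/J(A_0))$ is artinian semisimple. So the question ``is $E(A)$ classically Koszul?'' is meaningful in the sense of Beilinson--Ginzburg--Soergel, and amounts to producing a linear graded projective resolution of $E(A)_0$ over $E(A)$.

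\textbf{Forward direction of (1).} Assume $A$ is Koszul. Since Koszul implies quasi-Koszul, Theorem \ref{theorem 1}(2) already gives that $E(A)$ is generated in degree $1$ over $E(A)_0$. I would next show that for every $n\geqslant 0$ the $n$-th syzygy $\Omega^n(A/J)$ is itself a Koszul $A$-module generated in a single internal degree; this is where the full strength of the condition $J^k\Ker d_n=\Ker d_n\cap J^{k+1}P_n$ (varying $k$) is used, via a standard shift-of-filtration argument. Applying Theorem \ref{theorem 1}(1) to each $\Omega^n(A/J)$ then yields that $\gExt_A^\bullet(\Omega^n(A/J),A/J)$ is generated in degree $0$ as a left $E(A)$-module. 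Stitching these together through dimension-shift isomorphisms $\gExt_A^{m}(\Omega^n(A/J),A/J)\cong \gExt_A^{m+n}(A/J,A/J)$ and the Yoneda multiplication produces a complex of graded projective $E(A)$-modules that is linear by construction and has $E(A)_0$ as its cokernel. Proving exactness of this complex (equivalently, identifying its homology with the iterated $\Ext$'s and checking they vanish in the linear positions) is the crux of the argument.

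\textbf{Converse of (1) and part (2).} For the converse, assume $E(A)$ is classically Koszul. Then in particular $E(A)$ is generated in degree $1$, so Theorem \ref{theorem 1}(2) gives that $A$ is quasi-Koszul. To upgrade quasi-Koszul to Koszul I would argue inductively on $n$: the classical Koszul property of $E(A)$ pins down the internal degrees of $\gExt_A^n(A/J,A/J)$ (they collapse to a single internal degree $-n$ under the duality $E(A)_0$-semisimple), and this rigidity forces each $\Omega^n(A/J)$ to be generated in one internal degree; together with the quasi-Koszul identity this yields the full filtration identity $J^k\Omega^n=\Omega^n\cap J^{k+1}P_n$ by induction on $k$. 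Part (2) is proved by the same scheme, with $M$ in place of $A/J$: use Theorem \ref{qK iff E(M) generated in degree 0} (the module version of Theorem \ref{theorem 1}) to handle degree-$1$ generation of $\gExt_A^\bullet(M,A/J)$, and rerun the syzygy/linearity bootstrap, now over the classically Koszul ring $E(A)$ (whose Koszulness we have just established in (1)).

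\textbf{Main obstacle.} The hard part is the forward direction of (1): verifying exactness of the candidate linear resolution of $E(A)_0$ over $E(A)$. Concretely, one must show that the higher filtration identities $J^k\Omega^n=\Omega^n\cap J^{k+1}P_n$ for $k\geqslant 2$ (which go beyond what quasi-Koszul gives) preclude any ``hidden'' nonlinear generators in the resolution at each homological step. Equivalently, one must control the interaction between the internal grading on $A$ and the cohomological grading on $E(A)$ strongly enough to ensure that every $\gExt^n_A(A/J,A/J)$ is concentrated in a single internal degree; once that concentration is in place, linearity of the minimal $E(A)$-resolution of $E(A)_0$ follows. Organizing this book-keeping simultaneously in the homological index $n$ and the filtration index $k$ is the main technical point, and once it is carried out both directions of (1) and the module statement (2) fall into place.
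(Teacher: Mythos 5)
There is a genuine gap, and it lies exactly where you locate the crux. Your plan for both directions rests on the claim that each syzygy $\Omega^n(A/J)$ is generated in a single internal degree and that each $\gExt^n_A(A/J,A/J)$ is concentrated in a single internal degree (degree $-n$). That is the classical BGS criterion, and it fails in this generalized setting precisely because $A_0$ is not semisimple: already $\Omega^1(A/J)=J=J(A_0)\oplus A_{\geqslant 1}$ has minimal generators in internal degrees $0$ \emph{and} $1$ whenever $J(A_0)\neq 0$ and $A_1\neq 0$ (e.g.\ the paper's cyclic-quiver example with arrows of mixed weight), so $\gExt^1_A(A/J,A/J)\cong\gHom_S(J/J^2,A/J)$ has nonzero components in internal degrees $0$ and $-1$. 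The Koszulity of $E(A)$ asserted in the theorem is with respect to the \emph{homological} grading, and no internal-degree concentration is available to force linearity. Consequently the "stitching through dimension shifts" you propose does not produce the minimal $E(A)$-resolution of $E(A)_0$: the first syzygy of $E(A)_0$ is $E(A)_{\geqslant 1}\cong\gExt^\bullet_A(JP_0,A/J)(-1)$, and its projective cover is $\gExt^\bullet_A(JP_0/J^2P_0,A/J)(-1)$, which is not something the dimension-shift isomorphisms alone see.

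The paper's mechanism is different: it filters by powers of $J$ rather than by internal degree. For the forward direction one first shows that $J^kP_0$ is again Koszul (Proposition \ref{JM is qK}) and that the long exact Ext sequences attached to $0\to J^kM\to J^{k-1}M\to J^{k-1}M/J^kM\to 0$ break into short exact sequences (Proposition \ref{exact sequence of ext group} $(2)'$); since $J^kP_0/J^{k+1}P_0$ is a finite sum of graded simples, $\gExt^\bullet_A(J^kP_0/J^{k+1}P_0,A/J)(-k)$ is $E(A)$-projective and generated in homological degree $k$, and these assemble into the linear resolution of $E(A)_0$. For the converse, generation of $E(A)$ in degree $1$ only gives quasi-Koszulity, and the upgrade to the full identities $J^k\Omega^n=\Omega^n\cap J^{k+1}P_{n-1}$ is obtained by a double induction (on $n$ and on an auxiliary parameter $l$ measuring how many $k$'s the identity holds for), run simultaneously over all graded simples and the modules $J^2Q$ via the exact sequence $0\to\Omega(JP_0)\to\Omega(JP_0/J^2P_0)\to J^2P_0\to 0$ and Lemma \ref{fact 3}. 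Your outline does not supply a substitute for either of these steps, and the internal-degree rigidity it relies on is simply not present here.
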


Let $\Grj A=\oplus (J^i/J^{i+1})$, and $\Grj M=\oplus (J^iM/J^{i+1}M)$ for any graded $A$-module $M$. Then $\Grj M$ is a graded $\Grj A$-module. If $A_0$ is semisimple then $\Grj A\cong A$ and $\Grj M\cong M$.
We emphasize that $\Grj A$ is viewed  as a graded ring  via the graded degree induced by $J$-adic filtration, and $E(A)$ is viewed  as a graded ring  via the homological degree.

Let $\mathcal{K}_A$,  $\mathcal{K}_{E(A)}$ and $\mathcal{K}_{\Grj A}$ be the full subcategories  of finitely generated Koszul $A$-modules, Koszul ${E(A)}$-modules and  Koszul ${\Grj A}$-modules  respectively in the corresponding categories.
 Let
 $$\mathcal{E}=\gExt_A^\bullet(-,A/J),
\mathcal{F}=\gExt_{E(A)}^\bullet(-,E(A)/J_{E(A)}),
\mathcal{G}= \gExt^\bullet_{\Grj A}(-,A/J).$$

The following is a generalized version of Koszul algebra duality and Koszul module duality, which generalizes \cite[Theorem 2.10.2]{BGS1}, \cite[Theorem 6.3 (4)]{Sm}  and \cite[Theorems 10.1, 10.4]{GM1}.

\begin{theorem}\label{theorem 3}
Let $A$ be a Koszul ring. Then
\begin{itemize}
\item[(1)] $E(E(A))\cong \Grj A$ as graded rings.
\item[(2)]
The functors $\mathcal{E}, \mathcal{F}$ and $\mathcal{G}$ restrict to
$$\xymatrix{
\mathcal{K}_A\ar[r]^{\mathcal{E}} &\mathcal{K}_{E(A)}\ar@<1mm>[r]^{\mathcal{F}}& \mathcal{K}_{\Grj A}\ar@<1mm>[l]^{\mathcal{G}}.
}$$
For any $M\in \mathcal{K}_{A}$, $\mathcal{F}\mathcal{E}(M)\cong \Grj M$ as graded $\Grj A$-modules.
\item[(3)] The functors $\mathcal{F}$ and $\mathcal{G}$  give a duality between $\mathcal{K}_{E(A)}$ and $\mathcal{K}_{\Grj A}$.
\end{itemize}
\end{theorem}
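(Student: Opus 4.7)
The strategy is to reduce Theorem \ref{theorem 3} to classical Koszul duality applied to the $J$-adic associated graded ring $\Grj A$, via the following key lemma: if $A$ is Koszul and $M$ is a Koszul $A$-module, then $\Grj A$ is a classical Koszul ring, $\Grj M$ is a classical Koszul $\Grj A$-module, and there are natural isomorphisms $E(A) \cong E(\Grj A)$ of graded rings and $\gExt_A^\bullet(M, A/J) \cong \gExt_{\Grj A}^\bullet(\Grj M, A/J)$ of graded modules, compatible with one another.

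To prove the key lemma, take a minimal graded projective resolution $P_\bullet \to M$ over $A$. Because $A_0$ is semiperfect, each $P_n$ is a direct sum of shifts of $Ae_\alpha$ for primitive idempotents $e_\alpha \in A_0$, and its $J$-adic associated graded $\Grj P_n = \bigoplus (\Grj A) e_\alpha$ is projective over $\Grj A$. The defining Koszul identity $J^k \Ker d_n = \Ker d_n \cap J^{k+1} P_n$ is precisely the Artin--Rees-type compatibility needed to ensure that the induced complex $\Grj P_\bullet \to \Grj M$ is exact and that its differentials are linear with respect to the $J$-adic grading. Hence $\Grj P_\bullet \to \Grj M$ is a minimal linear graded projective resolution over $\Grj A$, which shows both that $\Grj A$ is classically Koszul (take $M = A/J$) and that $\Grj M$ is a classical Koszul $\Grj A$-module. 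Minimality kills the differentials after applying $\gHom(-, A/J)$, and the canonical identification $P_n/JP_n \cong \Grj P_n / J(\Grj P_n)$ produces $\gHom_A(P_n, A/J) \cong \gHom_{\Grj A}(\Grj P_n, A/J)$; functoriality of chain-level lifts makes these isomorphisms compatible with the Yoneda product.

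Given the key lemma, part (1) follows from $E(E(A)) \cong E(E(\Grj A)) \cong \Grj A$, the last isomorphism being classical Koszul duality for $\Grj A$. For part (2), the functor $\mathcal{E}$ sends $\mathcal{K}_A$ into $\mathcal{K}_{E(A)}$ by Theorem \ref{theorem 2}(2), while $\mathcal{F}$ and $\mathcal{G}$ restrict between $\mathcal{K}_{E(A)}$ and $\mathcal{K}_{\Grj A}$ as an instance of classical Koszul module duality between $E(A)$ and its Koszul dual $E(E(A)) \cong \Grj A$. The isomorphism $\mathcal{F}\mathcal{E}(M) \cong \Grj M$ is the chain
\[
\mathcal{F}\mathcal{E}(M) = \gExt_{E(A)}^\bullet\bigl(\gExt_A^\bullet(M, A/J), A/J\bigr) \cong \gExt_{E(A)}^\bullet\bigl(\gExt_{\Grj A}^\bullet(\Grj M, A/J), A/J\bigr) \cong \Grj M,
\]
in which the first step is the key lemma and the second is classical Koszul module duality applied to $\Grj M \in \mathcal{K}_{\Grj A}$. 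Part (3) is then precisely classical Koszul module duality between $\mathcal{K}_{E(A)}$ and $\mathcal{K}_{E(E(A))} \cong \mathcal{K}_{\Grj A}$.

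The main obstacle is the key lemma, specifically verifying exactness of $\Grj P_\bullet$ and the compatibility of the Yoneda product under $E(A) \cong E(\Grj A)$. Exactness amounts to showing: if $x \in P_n$ satisfies $d_n(x) \in J^{k+1} P_{n-1}$, then modulo $J^k P_n$ one can arrange $x \in \Ker d_n \cap J^{k+1} P_n$, which by the Koszul identity equals $J^k \Ker d_n \subseteq J^k P_n$. Ring-level compatibility reduces to choosing lifts of cocycles functorially on both $P_\bullet$ and $\Grj P_\bullet$. Once these steps are in place, the remainder of the argument is a transparent transport of structure through the classical Koszul duality theorems of Beilinson--Ginzburg--Soergel and Green--Martin{\'e}z-Villa.
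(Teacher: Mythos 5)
Your strategy is sound and genuinely different from the paper's. The paper proves part (1) by a direct construction: using the explicit linear resolution $P'_k = \gExt_A^\bullet(J^kP_0/J^{k+1}P_0, S)(-k)$ of $E(A)_0$ over $E(A)$ built in the proof of Theorem \ref{A sqk implies E(A) Koszul}, it identifies $\gExt^i_{E(A)}(E(A)_0,E(A)_0)$ with $\Hom_{S^{op}}(\gHom_S(J^i/J^{i+1},S),S) \cong J^i/J^{i+1}$ via evaluation, and then verifies multiplicativity of the resulting map $\theta$ by hand --- first for factors of homological degree $0$, then degree $1$ via explicit Yoneda-product diagrams, then in general using generation of $E(A)$ in degree $1$. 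Part (2) is likewise computed directly from the linear resolution of $\mathcal{E}(M)$, and part (3) is, as in your proposal, delegated to classical Koszul theory since $E(A)$ is left finite classical Koszul with semisimple degree-zero part. Your route instead inserts the intermediate isomorphism $E(A)\cong E(\Grj A)$ and then quotes classical Koszul duality \cite[Theorem 2.10.2]{BGS1} for the classically Koszul ring $\Grj A$. The inputs you need ($\Grj A$ and $\Grj M$ classically Koszul, with $\Grj P_\bullet$ a linear resolution) are exactly Theorem \ref{A sqk implies Gr A Koszul} and Corollary \ref{minimal projecitve resolution of filtration module}, which are proved in the paper independently of Theorem \ref{theorem 3}, so there is no circularity; if carried out, your argument is arguably more conceptual, since it exhibits the whole theorem as a transport of the classical statement along $\Grj$.

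The one place where your write-up is substantially thinner than what is required is the multiplicative comparison $E(A)\cong E(\Grj A)$ (and its module analogue $\mathcal{E}(M)\cong\mathcal{G}(\Grj M)$ as modules over it). The additive identification follows easily from minimality of the resolutions, but ``functoriality of chain-level lifts'' does not by itself give compatibility with Yoneda products: you must show that a lift $\alpha^k\colon P_{i+k}\to P_k$ of a cocycle $\alpha\in\gHom_A(P_i,S)$ can be chosen to be a filtered map for the $J$-adic filtrations, so that applying $\Grj$ to the entire ladder produces a valid lift over $\Grj A$ and hence $\Grj(\beta\circ\alpha^j)=\Grj(\beta)\circ\Grj(\alpha^j)$ computes the product on the associated graded side. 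This is where the Koszul identity $J^k\Ker d_n=\Ker d_n\cap J^{k+1}P_n$ enters a second time (beyond exactness of $\Grj P_\bullet$): it guarantees that $d_{i+1}(J^kP_{i+1})=J^k\Ker d_i\subseteq J^{k+1}P_i$, so that $\alpha^0 d_{i+1}$ is filtered of the right degree and the next lift can again be chosen filtered, inductively. Writing this out is essentially the same amount of diagram-chasing the paper spends on proving $\theta(f\cdot g)=\theta(f)\theta(g)$, so your proposal relocates rather than removes the technical core. With that verification supplied, the rest of your argument goes through.
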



 The following characterization for Koszul rings is proved under the assumption that  $A_0$ is artinian. Similar result holds for Koszul modules (see Theorem \ref{main-resut}).

\begin{theorem} \label{theorem 4}
Suppose that $A$ is a left finite $\mathbb{N}$-graded ring generated in degree $1$  with $A_0$ artinian. Then the following are equivalent.
\begin{itemize}
\item[(1)] $A$ is a  Koszul ring.
\item[(2)] $E(A)$ is a classical Koszul ring.
\item[(3)] $\Grj A$ is a classical Koszul ring.
\item[(4)] $E(A)$ is generated in degree $1$, and
$E(E(A))\cong \Grj A$ as graded rings.
\end{itemize}
Moreover if $A$ is right finite then the above statements are also equivalent to
\begin{itemize}
\item[(5)] $A^{op}$ is a  Koszul ring.
\end{itemize}

\end{theorem}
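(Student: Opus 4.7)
The plan is to close a cycle of implications among (1)--(4) using Theorems \ref{theorem 1}, \ref{theorem 2}, and \ref{theorem 3} together with the classical Koszul duality of Beilinson--Ginzburg--Soergel \cite{BGS1}. The essential role of the new hypothesis that $A_0$ is artinian is that then $S=A/J$ is semisimple, whence both $E(A)_0\cong\End_{A_0}(S)$ and $(\Grj A)_0\cong S$ are artinian semisimple. In other words, $E(A)$ and $\Grj A$ are locally finite $\mathbb{N}$-graded rings with semisimple degree-zero part, so the classical theory of \cite{BGS1} applies directly to them.

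First, (1)$\Leftrightarrow$(2) is exactly Theorem \ref{theorem 2}(1). For (1)$\Rightarrow$(4), a Koszul module is in particular quasi-Koszul, so Theorem \ref{theorem 1}(2) gives that $E(A)$ is generated in degree one, and Theorem \ref{theorem 3}(1) supplies $E(E(A))\cong\Grj A$. For (1)$\Rightarrow$(3), I would combine (1)$\Rightarrow$(2) with the fact that the Koszul dual of a classical Koszul ring is again classical Koszul (applied to $E(A)$, which has semisimple degree-zero part): this gives $E(E(A))$ classical Koszul, and via Theorem \ref{theorem 3}(1) this is exactly $\Grj A$. For (4)$\Rightarrow$(2), I would invoke the classical criterion \cite[Proposition 2.1.3]{BGS1}: with $E(A)_0$ semisimple and $E(A)$ generated in degree one (given), the bigraded identification $E(E(A))\cong\Grj A$ places $\gExt^n_{E(A)}(E(A)_0,E(A)_0)$ in the correct diagonal bidegree, which is exactly the defining condition for classical Koszulness of $E(A)$.

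The remaining direction (3)$\Rightarrow$(1) is approached by lifting: pick a minimal linear projective resolution $Q_\bullet\to S$ over $\Grj A$ with $Q_n$ generated in internal degree $n$, lift it inductively to a minimal graded projective resolution $P_\bullet\to S$ over $A$ with $\Grj P_n\cong Q_n$, and deduce the strictness identity $J^k\Ker d_n=\Ker d_n\cap J^{k+1}P_n$ from the exactness of $\Grj P_\bullet=Q_\bullet$. Finally, when $A$ is additionally right finite, $A^{op}$ is left finite with $(A^{op})_0=A_0^{op}$ artinian, so the equivalence (1)$\Leftrightarrow$(2) applies to $A^{op}$; the natural isomorphism $E(A^{op})\cong E(A)^{op}$ combined with the left--right symmetry \cite[Theorem 2.2.1]{BGS1} of classical Koszulness yields (2)$\Leftrightarrow$(5). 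The main obstacle is the lifting step (3)$\Rightarrow$(1): one must control the $J$-adic filtration at every stage of the inductive construction and verify the strictness identity, which is not a formal consequence of classical Koszulness of $\Grj A$ alone and requires careful bookkeeping of the filtration-compatible minimal covers.
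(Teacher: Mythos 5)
Your skeleton of implications is reasonable, and several legs are fine: (1)$\Leftrightarrow$(2) is indeed Theorem \ref{theorem 2}(1); (1)$\Rightarrow$(4) via Theorems \ref{theorem 1}(2) and \ref{theorem 3}(1) matches the paper; and your route (1)$\Rightarrow$(2)$\Rightarrow$``$E(E(A))$ classical Koszul''$\Rightarrow$(3) is a legitimate alternative to the paper's direct argument (the paper instead pushes a minimal resolution of $S$ through $\Grj$ and checks strict exactness), provided you note that $E(A)$ is left finite so that BGS duality applies to it. However, there are two genuine problems.

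First, you have misidentified the role of the hypothesis that $A_0$ is artinian. The semisimplicity of $S=A/J$, and hence of $E(A)_0$ and $(\Grj A)_0$, already follows from the standing assumption that $A_0$ is semiperfect; artinian buys nothing there. What artinian actually provides is the nilpotency of $J(A_0)$, equivalently the degree-wise nilpotency of $J$ (Lemma \ref{locally nilpotent}), and this is precisely the engine of the hardest direction (3)$\Rightarrow$(1), which you leave as ``careful bookkeeping.'' In the paper's proof of Theorem \ref{GrM is Koszul implies M is qK}, classical Koszulity of $\Grj M$ only yields
$\Ker d_0\cap J^nP_0=J^{n-1}\Ker d_0+\Ker d_0\cap J^{n+m}P_0$ for all $m$, and one needs $(J^{n+m}P_0)_{\leqslant t}=0$ for $m\gg 0$ to kill the error term and conclude $J^{n-1}\Ker d_0=\Ker d_0\cap J^nP_0$. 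Without this nilpotency input, exactness or generation-in-degree-one statements about the associated graded objects do not descend to the filtered level, so your lifting sketch does not close; this is the missing idea, not a bookkeeping issue.

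Second, your argument for (4)$\Rightarrow$(2) invokes \cite[Proposition 2.1.3]{BGS1}, which requires knowing the \emph{internal} degree of $\gExt^n_{E(A)}(E(A)_0,E(A)_0)$; but hypothesis (4) only supplies an isomorphism $E(E(A))\cong\Grj A$ of singly graded rings (homological degree against $J$-adic degree), not the bigraded identification you appeal to. The repair is the paper's route (Theorem \ref{FE(M) cong Grj M imply M sqk}): $\Grj A$ is automatically generated in degree $1$, hence so is $E(E(A))$; applying Theorem \ref{theorem 1}(2) to the left finite ring $E(A)$ shows $E(A)$ is quasi-Koszul, and since $E(A)_0$ is semisimple, Proposition \ref{qk and classical koszul} upgrades this to classical Koszulity. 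Finally, for (5) you use $E(A^{op})\cong E(A)^{op}$, which is an extra identification needing justification; the paper instead uses the transparent isomorphism $\Grj(A^{op})\cong(\Grj A)^{op}$ together with (1)$\Leftrightarrow$(3) and BGS left--right symmetry.
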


In the representation theory of quasi-hereditary algebras $A$, people are interested in the Koszul property of $\Grj A$ because of its connection with Kazhdan-Lusztig theory. Theorem \ref{theorem 4} gives a characterization of $A$ when $\Grj A$ is classically Koszul. In some sense Theorem \ref{theorem 4} answers the Questions in \cite{CPS} which concern the Koszulity of $A$ and $\Grj A$ (see subsection \ref{questions-in-CPS}).

For locally finite $\mathbb{N}$-graded algebras (with degree zero part not necessarily semisimple), generalized Artin-Schelter (for short, AS) regular property was studied  in \cite{MV2, MS, MM, RR1} (see Definition \ref{definition of GAS regluar} and Theorem \ref{equivalent definitions of AS regular}). Generalized AS-regular algebras are closely related to twisted Calabi-Yau algebras (see \cite[Thoeorem 1.5]{RR1}). It is well known that a connected Koszul algebra of finite global dimension is AS regular if and only if its Yoneda Ext algebra is Frobenius \cite[Proposition 5.10]{Sm}. The final result in this paper
is to prove that this fact holds for basic locally finite Koszul algebras  of finite global dimension. The proof is reduced to the classically Koszul case via $\Grj A$, which was proved essentially in \cite[Theorem 5.1]{MV1} (see Theorems \ref{A is generalized AS regular iff GrA is} and \ref{Char-generalized Koszul AS-regular algebra}). Recall that an $\mathbb{N}$-graded $k$-algebra $A$ is called basic if the degree $0$ part of $A/J$ is a finite direct sum of $k$. 

\begin{theorem}\label{theorem 5}
Suppose that $A$ is a basic locally finite Koszul algebra of finite global dimension. Let $E(A)=\gExt_A^\bullet(A/J,A/J)$ be the Yoneda Ext algebra of $A$. The following are equivalent.
\begin{itemize}
\item[(1)] $A$ is generalized AS regular.
\item[(2)] $\Gr_J A$ is generalized AS regular.
\item[(3)] $E(A)$ is a self-injective algebra.
\end{itemize}
\end{theorem}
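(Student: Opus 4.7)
My plan is to reduce everything to the classical Koszul--Frobenius correspondence by passing through the associated graded ring $\Gr_J A$.

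For (1) $\Leftrightarrow$ (2) I would invoke the forward-referenced Theorem \ref{A is generalized AS regular iff GrA is}, which asserts that for Koszul $A$ the generalized AS regular property is preserved under the passage to $\Gr_J A$. Intuitively, since $A$ is Koszul, the $J$-adic filtration is compatible with the minimal graded projective resolution of $A/J$, so the two defining conditions of generalized AS regularity (top Ext concentrated in one homological degree, and the appropriate invertible-bimodule condition on $\gExt^d_A(A/J,A)$) can be matched term by term between $A$ and $\Gr_J A$.

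For (2) $\Leftrightarrow$ (3), the key observation is that because $A$ is basic locally finite Koszul, the degree zero part $(\Gr_J A)_0 = A_0/J(A_0)$ is a finite product of copies of $k$, hence artinian semisimple. By Theorem \ref{theorem 2}(1), $\Gr_J A$ is then a classically Koszul algebra in the sense of Beilinson--Ginzburg--Soergel. In this classical Koszul setting with semisimple degree zero part, the theorem of Martin{\'e}z-Villa \cite[Theorem 5.1]{MV1} (extending \cite[Proposition 5.10]{Sm} to the non-connected case) gives that $\Gr_J A$ is generalized AS regular if and only if its Yoneda Ext ring $E(\Gr_J A)$ is self-injective. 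To close the loop I identify $E(\Gr_J A) \cong E(A)$ as follows: by Theorem \ref{theorem 3}(1), $E(E(A)) \cong \Gr_J A$ as graded rings; since $A$ is generalized Koszul, Theorem \ref{theorem 2}(1) gives that $E(A)$ is itself classically Koszul, and classical BGS Koszul duality applied to $E(A)$ then yields $E(E(E(A))) \cong E(A)$; composing these two isomorphisms produces $E(\Gr_J A) \cong E(A)$.

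The main obstacle is the step (1) $\Leftrightarrow$ (2): the relevant Ext groups $\gExt^\bullet_A(A/J,A)$ in the definition of generalized AS regularity have second argument $A$ (not $A/J$), and so are not controlled directly by the Koszul dualities in Theorems \ref{theorem 2} and \ref{theorem 3}. The expected route is a filtration argument on a minimal graded projective resolution of $A/J$, using the Koszul condition to guarantee the degeneration needed to compare $\gExt^\bullet_A(A/J,A)$ with $\gExt^\bullet_{\Gr_J A}((\Gr_J A)_0, \Gr_J A)$. Modulo this---which is exactly the content of the forward-referenced Theorem \ref{A is generalized AS regular iff GrA is}---the rest is formal bookkeeping on top of the Koszul duality already developed in the paper.
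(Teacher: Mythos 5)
Your proposal matches the paper's own proof: the equivalence (1) $\Leftrightarrow$ (2) is delegated to the forward-referenced Theorem \ref{A is generalized AS regular iff GrA is} (whose filtration/spectral-sequence argument is indeed where all the real work lies), and (2) $\Leftrightarrow$ (3) is obtained from Proposition \ref{Classical case about the relation of Yoneda and generalized AS regular} applied to the classically Koszul algebra $\Grj A$ together with the identification $E(\Grj A)\cong E(A)$, which you justify correctly by the triple-dual argument $E(E(E(A)))\cong E(A)$ --- a step the paper leaves implicit. The only quibble is a citation slip: the classical Koszulity of $\Grj A$ is Theorem \ref{A sqk implies Gr A Koszul} (equivalently Theorem \ref{theorem 4}, (1)$\Leftrightarrow$(3)), not Theorem \ref{theorem 2}(1), which concerns $E(A)$ rather than $\Grj A$.
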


The paper is organized as follows. In section 2, we introduce Yoneda products and minimal graded projective resolutions, whose existence is given for bounded below modules over some  $\mathbb{N}$-graded rings.  In section 3, we define (quasi-)Koszul modules and rings following the ideas in \cite{GM1}, and prove Theorem \ref{theorem 1}, which is about the  generating property of the Yoneda Ext rings  (resp. modules) of quasi-Koszul rings (resp. modules). In section 4, we prove Theorem \ref{theorem 2} and Theorem \ref{theorem 3}. In section 5, we consider the associated graded rings (resp. modules) of Koszul rings (resp. modules) with respect to the $J$-adic filtration and prove Theorem \ref{theorem 4}. In the last section, we focus on the locally finite algebras, and prove that if $A$ is Koszul then $A$ is generalized AS regular if and only if so is $\Grj A$ and if and only if $E(A)$ is a self-injective.

\section{Preliminaries}
Yoneda products play a key role in this paper and are repeatedly used. We recall the definition of Yoneda products first.

Let $R$ be a ring and $R^{op}$ be its opposite ring. When we say a module, it always means a left module. An $R^{op}$-module is exactly a right $R$-module.

\subsection{Yoneda products}\label{Yoneda Products}
Let $X$, $Y$ and $Z$ be $R$-modules. The map $\Ext_R^j(Y,Z) \times \Ext_R^i(X,Y)\to \Ext_R^{i+j}(X,Z)$  defined in the following is called the {\it Yoneda product} of the Ext groups.

Let $P_\bullet \to X \to 0$ and $Q_\bullet\to Y \to 0$ be projective resolutions. For any $\bar{\alpha}\in \Ext_R^i(X,Y)$ and $\bar{\beta}\in \Ext_R^j(Y,Z)$  represented by $\alpha\in \Hom_R(P_i, Y)$ and $\beta\in \Hom_R(Q_j, Z)$ respectively, there is a commutative diagram
\begin{center}
\begin{tikzcd}
P_{i+j} \arrow[r] \arrow[d, "\alpha^j"] & \cdots \arrow[r] & P_i \arrow[r] \arrow[d, "\alpha^0"] \arrow[rd, "\alpha"] & \cdots \arrow[r] & X \arrow[r] & 0 \\
Q_j \arrow[r] \arrow[d, "\beta"]        & \cdots \arrow[r] & Q_0 \arrow[r]                                            & Y \arrow[r]      & 0           &   \\
Z                                       &                  &                                                          &                  &             &
\end{tikzcd}
\end{center}
where $\alpha^0,\cdots, \alpha^j$ are the lifting of $\alpha$. Then
$$\bar{\beta}\cdot \bar{\alpha}: =\overline{\beta\circ \alpha^j}\in \Ext_R^{i+j}(X, Z)$$
is well-defined, and it is called the Yoneda product of $\bar{\alpha}$ and $\bar{\beta}$ (see, for example, \cite[Chapter 3]{ML}).

Let $\Ext_R^\bullet(X,Y)=\mathop{\oplus}\limits_{i\geqslant 0}\Ext^i_R(X,Y)$.  Then, with the Yoneda product, $\Ext_R^\bullet(Y,Y)$ is a graded ring, and $\Ext_R^\bullet(X,Y)$ is a graded $\Ext_R^\bullet(Y,Y)$-module. 

The graded ring $\Ext_R^\bullet(S,S)$ is called the {\it Yoneda Ext ring} of $R$ where $S=R/J_R$ with $J_R$ the Jacobson radical of $R$.

Yoneda products can be defined similarly in graded module categories.

\subsection{Minimal (graded) projective resolution}
Recall that a ring $R$ is semiperfect (resp. left perfect) if $R/J(R)$ is artinian semisimple and any idempotent of $R/J(R)$ can be lifted to $R$  (resp. $J(R)$ is left $T$-nilpotent) where $J(R)$ is the Jacobson radical of $R$.  A projective cover of an $R$-module $M$ is a surjective morphism $\pi:P\to M$ where $P$ is an $R$-projective module and its kernel is a superfluous submodule.
Projective cover of a module is unique up to isomorphism if it exists.

 Perfect rings and semiperfect rings are characterized by the existence of projective covers (see, for instance, \cite[Theorem 24.16]{L}).

\begin{proposition}
For any ring $R$, the following are equivalent.
\begin{itemize}
\item[(1)] $R$ is semiperfect (resp. left perfect).
\item[(2)] Every finitely generated left $R$-module (resp. Every left $R$-module) has a projective cover.
\end{itemize}
\end{proposition}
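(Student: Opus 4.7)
The plan is to handle both parts (semiperfect and left perfect) in parallel, reducing everything to the interaction between a projective cover and the Jacobson radical $J = J(R)$. I will use the following two standard tools without further comment: (i) Nakayama-type statement: if $M$ is finitely generated, then $JM$ is superfluous in $M$; (ii) a submodule $N\subseteq P$ is superfluous if and only if any lift of a projective cover $\pi: P\twoheadrightarrow P/N$ factors through $\pi$ uniquely up to iso. The deeper input for the perfect case is Bass's lemma: $J$ is left $T$-nilpotent if and only if $JM$ is superfluous in $M$ for every left $R$-module $M$.

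\textbf{Direction (1)$\Rightarrow$(2).} Assume $R$ is semiperfect. Given a finitely generated $M$, form $\overline{M}=M/JM$, which is finitely generated over the semisimple ring $R/J$, hence a finite direct sum $\bigoplus_i \overline{S_i}$ of simple $R/J$-modules. Lifting the primitive idempotents of $R/J$ to $R$ (which is possible since $R$ is semiperfect), we obtain indecomposable projective $R$-modules $P_i$ with $P_i/JP_i\cong \overline{S_i}$, and each $P_i\twoheadrightarrow \overline{S_i}$ is a projective cover. Assembling, $P:=\bigoplus_i P_i\twoheadrightarrow \overline{M}$ is a projective cover, and by projectivity it lifts to a map $\varphi: P\to M$. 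Since $M$ is finitely generated, $JM$ is superfluous in $M$, and this forces $\varphi$ to be surjective with superfluous kernel; hence $\varphi$ is a projective cover of $M$. The perfect case is identical except that we drop ``finitely generated'' and invoke Bass's lemma to ensure $JM$ is superfluous in every $M$.

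\textbf{Direction (2)$\Rightarrow$(1).} Assume every finitely generated $R$-module has a projective cover. Applying this to each simple $R$-module $S$ yields an indecomposable projective $P_S\twoheadrightarrow S$ with superfluous kernel, and the standard argument identifies $\mathrm{End}_R(P_S)$ as a local ring, so the corresponding idempotents in $R/J$ lift to $R$. Applying the hypothesis to $R/J$ itself produces a finite decomposition of $R$ into projective summands corresponding to the distinct simples, which forces $R/J$ to be semisimple artinian (any proper inclusion among left ideals of $R/J$ would contradict the superfluity of the kernel of the projective cover of the corresponding quotient). Together these prove $R$ is semiperfect. For the perfect case, the ingredient still needed is the $T$-nilpotence of $J$. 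By Bass's lemma this is equivalent to showing $JM$ is superfluous in every $M$, which I obtain by applying the projective-cover hypothesis to a suitable countable direct sum: given a sequence $a_1,a_2,\ldots\in J$, one constructs a module whose projective cover exists only if the sequence $a_1a_2\cdots a_n$ eventually vanishes, yielding the required $T$-nilpotence.

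The main obstacle is the last step, extracting $T$-nilpotence of $J$ from the mere existence of projective covers for arbitrary modules; this is Bass's theorem and is genuinely deeper than the semiperfect half, because one has to choose the test module cleverly enough that the superfluity of the kernel of its projective cover translates into the chain condition on products $a_1a_2\cdots a_n$. Everything else is bookkeeping with Nakayama's lemma and the lifting of idempotents modulo~$J$.
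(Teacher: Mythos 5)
The paper does not actually prove this proposition; it quotes it from the literature (Lam, \emph{A First Course in Noncommutative Rings}, Theorem 24.16), so the only thing to assess is your argument on its own terms. Your direction (1)$\Rightarrow$(2) is the standard one and is fine: build a projective cover $P\twoheadrightarrow M/JM$ from lifted idempotents, lift it to $\varphi\colon P\to M$, and use superfluity of $JM$ (Nakayama for finitely generated $M$, Bass's $T$-nilpotence criterion for arbitrary $M$) to get surjectivity, with $\Ker\varphi$ superfluous because it sits inside the superfluous kernel of $P\to M/JM$.

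The gap is in (2)$\Rightarrow$(1), in the step establishing that $R/J$ is semisimple. ``Applying the hypothesis to $R/J$ itself'' yields nothing new, since $R\twoheadrightarrow R/J$ is already a projective cover ($J$ is superfluous in the finitely generated module $R$); and your parenthetical reason --- that a proper inclusion among left ideals of $R/J$ would contradict superfluity --- cannot be right, because proper inclusions of left ideals exist in every semisimple ring that is not a division ring. The missing ingredient is the comparison lemma for projective covers: if $P\twoheadrightarrow M$ is a projective cover and $\psi\colon Q\twoheadrightarrow M$ is any surjection from a projective, then $Q=P'\oplus Q'$ with $\psi|_{P'}$ a projective cover of $M$ and $Q'\subseteq\Ker\psi$. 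Applying this with $Q=R$ and $M=R/I$ for an arbitrary left ideal $I\supseteq J$ gives $R=P'\oplus Q'$ with $Q'\subseteq I$ and $I\cap P'=\Ker(P'\to R/I)$ superfluous in $R$, hence contained in $J$; therefore $\bar I=\bar{Q'}$ is a direct summand of $R/J$, and since $\bar I$ was arbitrary, $R/J$ is semisimple, with the same decompositions lifting the idempotents. Note also that the finiteness of ``the distinct simples'' that your sketch presupposes is only available \emph{after} semisimplicity of the cyclic module $R/J$ is known. Finally, for the perfect case you correctly isolate the hard point (left $T$-nilpotence of $J$), but you only assert that a suitable test module exists; the standard choice is the free module $F=\bigoplus_{n\geqslant 1}Re_n$ together with the submodule generated by $\{e_n-a_ne_{n+1}\}$, and as written that step is a citation of Bass's theorem rather than an argument.
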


We are working on graded rings and modules.
For basic definitions and facts concerning  graded rings and  filtered rings refer to \cite{LO} or \cite{NO} for references.

Let $A$ be an $\mathbb{N}$-graded ring, $J=J(A_0)\oplus A_{\geqslant 1}$ be the graded Jacobson radical of $A$ and $S=A/J$. Let $M$ be a graded $A$-module. For any integer $l$, the $l$-shift $M(l)$ of $M$ is a graded module with $M(l)=M$ as ungraded modules but with the grading $M(l)_i=M_{i+l}$.

A graded projective resolution of a graded $A$-module  $M$
\begin{equation}\label{proj-reso}
\cdots \to P_{i+1}\xrightarrow{d_{i+1}} P_i\xrightarrow{d_i}\cdots \to P_0\xrightarrow{d_0} M\to 0
\end{equation}
 is called {\it minimal} if $\im d_{i+1}\subseteq JP_i$ for all $i \geqslant 0$. If each $P_i$ in the projective resolution \eqref{proj-reso} is generated in degree $i$, then we say $M$ has a {\it linear projective resolution} or  \eqref{proj-reso} is a  linear projective resolution of $M$.

Any linear projective resolution is minimal.

For any nonzero bounded below graded $A$-module $M$, let $l(M)$ to be the minimal integer $i$ such that $M_{i}\neq 0$.

The following is a more general version of Nakayama's lemma.
\begin{lemma}\label{Nak lemma}
Let $A$ be an $\mathbb{N}$-graded ring.
\begin{itemize}
\item[(1)] If $M$ is a  nonzero bounded below graded $A$-module with $M_{l(M)}$ finitely generated as an $A_0$-module, then $JM \lneq M$.
\item[(2)] $J(A_0)$ is left $T$-nilpotent  if and only if  $JM \lneq M$ for any nonzero bounded below graded $A$-module $M$.
\end{itemize}
\end{lemma}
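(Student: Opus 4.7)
The plan is to focus entirely on the lowest degree component $M_{l(M)}$ and reduce everything to classical statements about the ungraded ring $A_0$.

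For part (1), I would first compute $(JM)_{l(M)}$. Since $J = J(A_0) \oplus A_{\geqslant 1}$, a homogeneous element of $JM$ of degree $l(M)$ is a sum of products $j \cdot m$ with $j \in J$ homogeneous of some degree $i \geqslant 0$ and $m \in M$ of degree $l(M) - i$. For $i \geqslant 1$ we would need $m \in M_{l(M)-i} = 0$, so only the $i = 0$ contribution survives, giving
\[
(JM)_{l(M)} \;=\; J(A_0)\cdot M_{l(M)}.
\]
Since $M_{l(M)}$ is a nonzero finitely generated $A_0$-module, the classical Nakayama lemma applied to the ring $A_0$ gives $J(A_0) M_{l(M)} \neq M_{l(M)}$. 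Therefore $(JM)_{l(M)} \neq M_{l(M)}$, whence $JM \lneq M$.

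For part (2), the direction ($\Rightarrow$) runs along the same lines: for any nonzero bounded below graded $A$-module $M$, the same degree-counting argument yields $(JM)_{l(M)} = J(A_0) M_{l(M)}$, and Bass's characterization of left $T$-nilpotence (i.e.\ $J(A_0)$ is left $T$-nilpotent if and only if $J(A_0) N \neq N$ for every nonzero left $A_0$-module $N$) applied to the nonzero $A_0$-module $N = M_{l(M)}$ gives $J(A_0) M_{l(M)} \neq M_{l(M)}$, so $JM \lneq M$. Notice that here we no longer need $M_{l(M)}$ to be finitely generated, precisely because $T$-nilpotence of $J(A_0)$ is strictly stronger than $J(A_0) \subseteq J(A_0)$ (the classical Nakayama conclusion holds for arbitrary modules).

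For the converse ($\Leftarrow$), I would take an arbitrary nonzero left $A_0$-module $N$ and turn it into a bounded below graded $A$-module $\widetilde{N}$ concentrated in degree $0$, using the graded ring surjection $A \twoheadrightarrow A/A_{\geqslant 1} \cong A_0$. Then $\widetilde{N}$ is bounded below with $l(\widetilde{N}) = 0$, and since $A_{\geqslant 1}$ acts as zero we have $J \widetilde{N} = J(A_0) N$. The hypothesis forces $J(A_0) N = J\widetilde{N} \lneq \widetilde{N} = N$, and since $N$ was an arbitrary nonzero $A_0$-module, Bass's theorem again gives that $J(A_0)$ is left $T$-nilpotent.

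The only subtlety is quoting the correct form of Bass's characterization of $T$-nilpotence (e.g.\ Lam, \emph{A First Course in Noncommutative Rings}, Ch.~23), and verifying that turning an $A_0$-module into a graded $A$-module via $A \to A_0$ is legitimate; once these are in place the argument is a straightforward degree chase in the lowest-weight component, and no genuine obstacle arises.
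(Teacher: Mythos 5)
Your proposal is correct and follows essentially the same route as the paper: part (1) is the observation that $(JM)_{l(M)}=J(A_0)M_{l(M)}$ (since $A_{\geqslant 1}M$ contributes nothing in the bottom degree) combined with classical Nakayama over $A_0$, and part (2) is the same degree chase together with Bass's characterization of left $T$-nilpotence (the paper cites \cite[Lemma 28.3]{AF}), with the converse obtained by viewing an arbitrary $A_0$-module as a graded $A$-module concentrated in degree $0$. No gaps; only the parenthetical ``strictly stronger than $J(A_0)\subseteq J(A_0)$'' is a garbled phrase, but it does not affect the argument.
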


\begin{proof} (1) If
 $M=JM=J(A_0)M+A_{\geqslant 1}M$, then $M_i\subseteq J(A_0)M_i$ when $i=l(M)$, and so $M_i = J(A_0)M_i$, which implies that $M_i=0$.

 (2) By \cite[Lemma 28.3]{AF}, $J(A_0)$ is left $T$-nilpotent  if and only if  $JM_0 \lneq M_0$ for any nonzero $A_0$-module $M_0$.
\end{proof}

Whenever we say that Nakayama's lemma holds for $M$ we mean $JM \lneq M$.
If $P_\bullet\to M \to 0$ is a minimal graded projective resolution of $M$, and Nakayama's lemma holds for the quotients of all $P_i$, then $P_i$ is a graded projective cover of $\im d_{i}$, that is, $\ker d_i$ is a graded superfluous submodule of $P_i$. In particular, $P_0$ is a graded projective cover of $M$.

An $\mathbb{N}$-graded ring $A$ is called {\it left (resp. right) finite} if $A_i$ is a finitely generated left (resp. right) $A_0$-module for all $i$. A graded left (resp. right) $A$-module $M$ is called {\it left (resp. right) finite} if $M_i$ is a finitely generated left (resp. right) $A_0$-module for all $i$.

In this paper, a noetherian or artinian ring means a left noetherian or left artinian ring.

\begin{proposition}\label{minimal projective resolution}
Let $A$ be an $\mathbb{N}$-graded ring, $M$ be a bounded below graded $A$-module. \begin{itemize}
\item[(1)] If $A$ is left finite with $A_0$ noetherian semiperfect, and $M$ is left finite, then $M$ has a minimal graded projective resolution.
\item[(2)] If $A_0$ is left perfect, then $M$ has a minimal graded projective resolution.
\end{itemize}
\end{proposition}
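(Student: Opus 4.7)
The plan is to build the resolution degree by degree, at each stage producing a graded projective cover of the current syzygy. Everything reduces to showing the existence of a graded projective cover $\pi\colon P_0\to M$ which inherits the hypotheses on $M$ (bounded below, and left finite in case (1)). Granted such a $\pi$, the kernel $\Omega M = \Ker\pi$ is automatically bounded below, and in case (1) it is also left finite because $P_0$ will be left finite and $A_0$ is left noetherian. Iterating this construction then produces the desired minimal graded projective resolution with $\im d_{i+1}\subseteq JP_i$ at every step.

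To construct $P_0$, set $\overline{M} = M/JM$, a graded module over $S := A_0/J(A_0)$ (concentrated in degree $0$), which is semisimple since $A_0$ is semiperfect in case (1) and left perfect in case (2). For each $i$, pick a projective cover $\sigma_i\colon Q_i\to \overline{M}_i$ of the $A_0$-module $\overline{M}_i$: this exists in case (1) because $\overline{M}_i$ is finitely generated over the semiperfect ring $A_0$, and in case (2) because $A_0$ is left perfect. Use the surjection $M_i \twoheadrightarrow \overline{M}_i$ and projectivity of $Q_i$ to lift $\sigma_i$ to $\widetilde{\sigma}_i\colon Q_i \to M_i$, then put
$$P_0 \;:=\; \bigoplus_{i} (A\otimes_{A_0} Q_i)(-i)$$
with $Q_i$ placed in degree $i$, and define $\pi\colon P_0 \to M$ as the $A$-linear extension of the $\widetilde{\sigma}_i$'s. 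Note that $A\otimes_{A_0} Q_i$ is a direct summand of a free $A$-module, so $P_0$ is graded projective.

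The key verification is that the induced map $\overline{\pi}\colon \overline{P_0}\to \overline{M}$ is a degreewise isomorphism. In degree $n$, the $A_{\geqslant 1}$-part of $JP_0$ absorbs every tensor summand $A_{n-i}\otimes_{A_0} Q_i$ with $i<n$, while the $J(A_0)$-part cuts $A_0\otimes_{A_0} Q_n$ down to $Q_n/J(A_0)Q_n$; the map then reads as $\overline{\sigma}_n$, which is an isomorphism since $\overline{M}_n$ is semisimple. On one hand, the image $N$ of $\pi$ therefore satisfies $M = N + JM$, i.e.\ $M/N = J(M/N)$, so Lemma \ref{Nak lemma} forces $M/N = 0$ and $\pi$ is surjective. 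On the other hand, $\Ker\pi \subseteq JP_0$, giving the minimality condition. In case (1), $(P_0)_n = \bigoplus_{l(M)\leqslant i \leqslant n} A_{n-i}\otimes_{A_0} Q_i$ is a finite direct sum of finitely generated $A_0$-modules, so $P_0$ is left finite.

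The main technical obstacle is the simultaneous pursuit of surjectivity and minimality through the single isomorphism $\overline{\pi}$: both properties hinge on knowing that the degreewise projective covers $\sigma_i$ exhibit the correct $J$-quotient when passed through the tensor construction. Once this is in hand, the semiperfect (resp.\ left perfect) hypothesis on $A_0$ delivers the inputs, left noetherianness of $A_0$ keeps left finiteness intact along the iteration in case (1), and Lemma \ref{Nak lemma} closes the Nakayama-style loop in both cases.
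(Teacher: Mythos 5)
Your argument is correct and follows essentially the same route as the paper: construct a graded projective cover of $M$ by lifting degreewise $A_0$-projective covers of $M/JM$, deduce surjectivity from Nakayama's lemma and minimality from $\Ker\pi\subseteq JP_0$, and iterate, using left noetherianity of $A_0$ to keep the syzygies left finite in case (1). The only cosmetic difference is that you realize the graded lift as $\bigoplus_i (A\otimes_{A_0}Q_i)(-i)$, whereas the paper lifts via the indecomposable projectives $Ae_j$ attached to the idempotents of the semiperfect ring $A_0$; these produce the same module.
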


\begin{proof} (1)
Since $A_0$ is semiperfect, it has a finite complete set of orthogonal primitive idempotents $\{e_i\}_{i=1}^n$ which is also a finite complete set of orthogonal primitive idempotents of $A$. Each indecomposable $A_0$-projective module ${A_0e_i}$ can be viewed as a graded quotient $Ae_i/{A_{\geqslant 1} e_i}$   of the indecomposable projective $A$-module $Ae_i$.

Let $\overline{M}=M/JM=\oplus \overline{M}_i$. Then, for any $i$, $\overline{M}_i$ is a finitely generated $A_0$-module. Let $P_i'$ be aprojective cover of $A_0$-module $\overline{M}_i$.  By lifting $P_i'$ to a projective $A$-module $P_i$ via the $(Ae_j)'s$, we have a natural graded $A$-module morphism $\pi_i$ which is the composition of $P_i\to P_i'\to \overline{M}_i$. By a degree shifting, we may assume that $P_i$ is a graded projective cover of $A$-module $\overline{M}_i$ with $l(P_i) \geqslant i$ by Nakayama's lemma.

Since  $P_i$ is finitely generated, it is left finite. It follows from $l(P_i) \geqslant i$ that $P=\oplus_i P_i$ is left finite. Then $P$ is a graded projective cover of $\overline{M}$ with the surjective morphism $\pi=\oplus \pi_i$. Since $P$ is graded projective, there is a graded morphism $\pi': P \to M$ so that the following diagram is commutative, that is, $\epsilon\circ\pi'=\pi$.
\begin{center}
\begin{tikzcd}
                        & P \arrow[ld, "\pi'"', dashed] \arrow[d, "\pi"] &   \\
M \arrow[r, "\epsilon"] & \overline{M} \arrow[r]                 & 0
\end{tikzcd}
\end{center}
 Then $\pi'(P)+JM=M$. By Nakayama's lemma, $\pi'$ is surjective. Hence $P$ is a graded projective cover of $M$ because $\Ker \pi'\subseteq \Ker \pi\subseteq JP$.

 Since $A_0$ is noetherian, $\Ker \pi'$ is also left finite. Repeating the process by replacing $M$ with $\Ker \pi'$, we may construct a minimal graded projective resolution of $M$ inductively.

 (2) By a similar proof of (1) and  Lemma \ref{Nak lemma} (2).
\end{proof}

It follows from the proof of Proposition \ref{minimal projective resolution} that any left finite bounded below projective $A$-module is isomorphic to a  direct sum of the shifts of the indecomposable projective $A$-modules $Ae_i$ if $A_0$ is semiperfect.

\section{Koszul rings and modules}
In this section we first recall the definitions of classical graded Koszul rings, Koszul modules  \cite{BGS1}, and their characterizations by the property of syzygies given in \cite{GM1}.
By using the characterizations in graded case, Green and Martin{\'e}z-Villa defined (strongly) quasi-Koszul modules and rings in \cite{GM1} for ungraded noetherian semiperfect rings.
Following the idea in \cite{GM1} we define (quasi-)Koszul rings and modules in a more general setting. Theorem \ref{qK iff E(M) generated in degree 0} is the main result in this section, which characterizes the  quasi-Koszul property via the generating property of the Yoneda Ext ring and module, and hence generalizes the results in both classical graded and ungraded cases.

\subsection{Definition of (quasi-)Koszul modules and rings}

\begin{definition}\cite[Definitions 1.2.1, 2.14.1]{BGS1}
Let $A$ be an $\mathbb{N}$-graded ring with $A_0$ artinian semisimple. Suppose $M$ is a graded $A$-module generated in degree $0$. If $M$ has a linear projective resolution, then $M$ is called {\it classically Koszul}.
If $A_0$ considered as a graded $A$-module is a classically Koszul module, then  $A$ is called a {\it classically Koszul ring}.
\end{definition}

Recall that an $\mathbb{N}$-graded ring $A=\oplus A_{i \geqslant 0}$ is called {\it generated in degree $1$} if $A_iA_j=A_{i+j}$ for all $i, j \geqslant 0$ \cite[Lemma 2.1]{GM1}. In this case, $A$ is also said to be generated  by $A_1$ over $A_0$  \cite[Definition 1.2.2]{BGS1}.
An graded $k$-algebra $A$ is called {\it locally finite} if $A_i$ is finite dimensional as a $k$-vector space for all $i\geqslant 0$.
An $\mathbb{N}$-graded locally finite $k$-algebra generated in degree $1$ with $A_0$ being a direct sum of $k$ is called a {\it graded quiver algebra}, as it can be viewed as a quotient of a path algebra $kQ$ for a finite quiver $Q$ \cite[Proposition 1.1.1]{MV1}. In fact, most results proved in \cite{GM1, GM2} for graded quiver algebras hold for left finite $\mathbb{N}$-graded ring with $A_0$ artinian semisimple.

Here is a characterization of classical Koszul modules proved in \cite[Proposition 3.1 and Lemma 5.1]{GM1}.
\begin{proposition}\label{qk and classical koszul}
Let $A$ be an 
$\mathbb{N}$-graded ring generated in degree $1$ with $A_0$ artinian semisimple.
Suppose that $M$ is a graded $A$-module generated in degree $0$, and
$\cdots \to P_n\xrightarrow{d_n} P_{n-1}\to\cdots\to P_0\xrightarrow{d_0} M\to 0$
is a minimal graded projective resolution of $M$. Then the following are equivalent:
\begin{itemize}
\item[(1)] $M$ is a classical Koszul module.
\item[(2)] For any $n\geqslant 0$, $J\Ker d_n = \Ker d_n\cap J^2P_n$.
\item[(3)] For any $n,k\geqslant 0$, $J^k\Ker d_n = \Ker d_n\cap J^{k+1}P_n$.
\end{itemize}
\end{proposition}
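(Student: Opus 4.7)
The plan is to prove the three-way equivalence along the cycle $(1) \Rightarrow (3) \Rightarrow (2) \Rightarrow (1)$, exploiting throughout that since $A_0$ is artinian semisimple the graded Jacobson radical is $J = A_{\geqslant 1}$. Consequently, whenever $P$ is a graded projective module generated in a single degree $d$, one has the clean identification $J^k P = P_{\geqslant d+k}$, so the $J$-adic filtration and the grading filtration coincide on such modules. This is the tool that converts the homological conditions on $\Ker d_n$ into statements about in which degrees $P_n$ is generated.

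For $(1) \Rightarrow (3)$, assume the resolution is linear, so every $P_n$ is generated in degree $n$. Since $d_{n+1}$ is graded and $P_{n+1}$ is generated in degree $n+1$, the image $\Ker d_n = \image d_{n+1}$ is also generated in degree $n+1$, i.e.\ $\Ker d_n = A \cdot (\Ker d_n)_{n+1}$. Using $A$ is generated in degree $1$ over $A_0$, we get $J^k \Ker d_n = A_{\geqslant k}(\Ker d_n)_{n+1} = (\Ker d_n)_{\geqslant n+k+1}$. On the other hand, $\Ker d_n \cap J^{k+1}P_n = \Ker d_n \cap (P_n)_{\geqslant n+k+1}$, which equals the same thing because $\Ker d_n$ already lives in degrees $\geqslant n+1$. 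The implication $(3) \Rightarrow (2)$ is immediate on setting $k=1$.

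For $(2) \Rightarrow (1)$, I induct on $n$ to show that $P_n$ is generated in degree $n$. The base case $n=0$ holds because $M$ is generated in degree $0$, so its projective cover $P_0$ is too. For the inductive step, assume the claim through index $n$. Minimality of the resolution gives $\Ker d_n \subseteq JP_n = (P_n)_{\geqslant n+1}$, and since $P_{n+1}$ is a graded projective cover of $\Ker d_n$ one has $P_{n+1}/JP_{n+1} \cong \Ker d_n/J\Ker d_n$, so it suffices to show the latter is concentrated in degree $n+1$. But by hypothesis $(2)$, $J\Ker d_n = \Ker d_n \cap J^2 P_n = \Ker d_n \cap (P_n)_{\geqslant n+2} = (\Ker d_n)_{\geqslant n+2}$, forcing $\Ker d_n/J\Ker d_n$ to be supported in degree $n+1$ only, hence $P_{n+1}$ is generated in degree $n+1$.

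The main obstacle is verifying carefully in the inductive step that the projective cover identifies the generating degrees of $P_{n+1}$ with those of $\Ker d_n / J\Ker d_n$; this rests on $A_0$ being semisimple together with the uniqueness of graded projective covers supplied by the existence result of Proposition \ref{minimal projective resolution}. The rest of the argument is bookkeeping around the equality $J^k P = P_{\geqslant d+k}$ for singly generated projectives, which is the essential reason why the classical artinian-semisimple hypothesis on $A_0$ makes syzygy and linearity conditions interchangeable.
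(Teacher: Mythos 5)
Your proof is correct. The paper does not prove this proposition itself --- it cites \cite[Proposition 3.1 and Lemma 5.1]{GM1} --- and your argument, built on the identification $J^kP = P_{\geqslant d+k}$ for graded projectives generated in degree $d$ (valid because $J = A_{\geqslant 1}$ when $A_0$ is semisimple and $J^k = A_{\geqslant k}$ when $A$ is generated in degree $1$), together with graded Nakayama to pass from $\Ker d_n/J\Ker d_n$ being concentrated in degree $n+1$ to $P_{n+1}$ being generated in degree $n+1$, is the standard route and is essentially the argument of that reference.
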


The property (3) can be interpreted as follows: the $1$-shifting of the $J$-adic filtration on the syzygies is exactly the submodule filtration induced from the $J$-adic filtration of the projective modules in the minimal projective resolution.

Quasi-Koszul rings and modules were defined in \cite{GM1} first by the generating property of Koszul dual and were characterized by the condition (2) in Proposition \ref{qk and classical koszul} (see Theorem \ref{Theorem 4.4 in {GM1}}).

\begin{definition}\cite[page 263]{GM1}\label{ungraded qK ring}
Let $R$ be a noetherian semiperfect ring, $J$ be its Jacobson radical and $S=R/J$.
\begin{itemize}
\item[(1)]
A finitely generated $R$-module $M$ is called {\it quasi-Koszul} if, for any $n>0$,
$\Ext_R^1(S,S)\cdot \Ext_R^{n-1}(M,S)=\Ext^n_R(M,S)$.
\item[(2)] The ring $R$ is called {\it quasi-Koszul} if $S$ is a quasi-Koszul module, that is, $\Ext_R^{\bullet}(S,S)$ is generated by  $\Ext^1_A(S,S)$ over $\Hom_A(S,S)$.
\end{itemize}
\end{definition}

This means that $R$ is quasi-Koszul if and only if the Yoneda Ext ring $\Ext_R^{\bullet}(S,S)$ of $R$ is generated in degree $1$, and if $R$ is quasi-Koszul, then $M$ is quasi-Koszul if and only if $\Ext_R^{\bullet}(M,S)$ is generated in degree $0$ as a graded $\Ext_R^{\bullet}(S,S)$-module.
\begin{theorem} \cite[Theorem 4.4]{GM1} \label{Theorem 4.4 in {GM1}} Let $R$ be a noetherian semiperfect ring, $J$ be its Jacobson radical and $S=R/J$.
\begin{itemize}
\item[(1)] A finitely generated $R$-module $M$ is  quasi-Koszul if and only if $M$ has a minimal projective resolution $P_\bullet \to M \to 0$ such that $J\Ker d_n = \Ker d_n\cap J^2P_n$ for all $n$.
\item[(2)] The ring $R$ is  quasi-Koszul if and only if ${}_RS$ has a minimal projective resolution $P_\bullet \to S \to 0$ such that $J\Ker d_n = \Ker d_n\cap J^2P_n$ for all $n$.
\end{itemize}
\end{theorem}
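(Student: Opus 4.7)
The plan is to work with a minimal projective resolution $P_\bullet \to M \to 0$, which exists by Proposition \ref{minimal projective resolution}, set $K_n = \Ker d_n \subseteq P_n$, and translate the quasi-Koszul condition into a vanishing statement for an explicit $\Hom$-cokernel which Nakayama's lemma then converts into the syzygy identity. Because $d_{n+1}(P_{n+1}) \subseteq JP_n$ and $JS = 0$, the differentials on $\Hom_R(P_\bullet, S)$ all vanish, so
\[
\Ext_R^n(M, S) \;=\; \Hom_R(P_n, S) \;\cong\; \Hom_S\!\bigl(K_{n-1}/JK_{n-1},\, S\bigr),
\]
and likewise $\Ext_R^1(S,S) \cong \Hom_S(J/J^2, S)$ via $0 \to J \to R \to S \to 0$. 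Given $[\alpha] \in \Ext^{n-1}(M,S)$ represented by $\alpha : P_{n-1} \to S$, lift it through $R \twoheadrightarrow S$ to $\tilde\alpha : P_{n-1} \to R$; since $d_n(P_n) \subseteq JP_{n-1}$, the composite $\tilde\alpha \circ d_n$ lands in $J$, and any two lifts differ by a map $P_{n-1} \to J$, so agree modulo $J^2$. This produces a canonical map $\Phi(\alpha): P_n \to J/J^2$, and chasing the standard chain-map definition of the Yoneda product through a minimal projective resolution $Q_\bullet \to S$ gives
\[
[\beta]\cdot[\alpha] \;=\; \bar\beta \circ \Phi(\alpha) \;\in\; \Ext^n(M,S)
\]
for every $[\beta] \in \Ext^1(S,S)$ corresponding to $\bar\beta : J/J^2 \to S$.

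Since $\tilde\alpha(J^2 P_{n-1}) \subseteq J^2$, the map $\Phi(\alpha)$ factors as $P_n \twoheadrightarrow K_{n-1}/(K_{n-1}\cap J^2 P_{n-1}) \to J/J^2$, so the image of the Yoneda product is contained in the subspace
\[
\Hom_S\!\bigl(K_{n-1}/(K_{n-1} \cap J^2 P_{n-1}),\, S\bigr) \;\subseteq\; \Hom_S(K_{n-1}/JK_{n-1}, S) = \Ext^n(M,S),
\]
pulled back along the natural surjection. The reverse inclusion is the technical heart of the argument. Decompose $P_{n-1} = \bigoplus_j Re_{i_j}$ into indecomposable projectives, using semiperfectness of $R$; then $JP_{n-1}/J^2 P_{n-1} = \bigoplus_j Je_{i_j}/J^2 e_{i_j}$, and each summand is an $S$-module direct summand of $J/J^2 = \bigoplus_i Je_i/J^2 e_i$. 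Define $\alpha_j \in \Hom_R(P_{n-1}, S)$ by $\alpha_j(e_{i_j}) = \bar e_{i_j}$ and $\alpha_j(e_{i_k}) = 0$ for $k \neq j$; one checks directly that $\Phi(\alpha_j)$ realises the projection of $K_{n-1}/(K_{n-1} \cap J^2 P_{n-1})$ onto its $j$-th component $Je_{i_j}/J^2 e_{i_j} \hookrightarrow J/J^2$. Given an arbitrary $\phi \in \Hom_S(K_{n-1}/(K_{n-1} \cap J^2 P_{n-1}), S)$, the injectivity of $S$-modules over the semisimple artinian ring $S$ extends $\phi$ to $\bigoplus_j Je_{i_j}/J^2 e_{i_j} \to S$, and this extension decomposes coordinate-wise as $\sum_j \bar\beta_j \circ \Phi(\alpha_j)$ with $\bar\beta_j \in \Hom_S(J/J^2, S)$, so $\phi$ lies in the Yoneda image.

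Combining the two inclusions identifies the cokernel of the Yoneda product with $\Hom_S\!\bigl((K_{n-1}\cap J^2 P_{n-1})/JK_{n-1},\, S\bigr)$. Because $R$ is noetherian, this module is a finitely generated $S$-module; since $S$ cogenerates the category of finitely generated $S$-modules, its vanishing is equivalent to $(K_{n-1}\cap J^2 P_{n-1})/JK_{n-1} = 0$, i.e., to the desired identity $JK_{n-1} = K_{n-1}\cap J^2 P_{n-1}$. Reindexing $n \mapsto n+1$ yields part (1) for all $n \geq 0$, and part (2) is the special case $M = S$. The principal obstacle is the reverse inclusion analysed in the second paragraph: isolating a single component via $\Phi(\alpha_j)$ and invoking injectivity of semisimple $S$-modules is what allows an arbitrary $\Hom_S$ into $S$ to be exhibited as a finite sum of Yoneda products; without both the semiperfect decomposition of $P_{n-1}$ and the semisimplicity of $S$, one would only obtain the trivial forward inclusion.
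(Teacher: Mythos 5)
Your proof is correct. The paper itself only cites this result from \cite{GM1} and instead proves its graded generalization as Theorem \ref{qK iff E(M) generated in degree 0}; your argument is essentially the same as that proof --- identifying $\Ext^n_R(M,S)$ with $\Hom_S(K_{n-1}/JK_{n-1},S)$ via minimality, realizing the Yoneda pairing with $\Ext^1_R(S,S)\cong\Hom_S(J/J^2,S)$ as composition with the induced map $K_{n-1}\to J/J^2$, and using semisimplicity of $S$ together with the primitive idempotent decomposition of $P_{n-1}$ to identify the image of the multiplication --- merely packaged as a single computation of that image (with both inclusions at once) rather than as two separately argued implications.
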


Strongly Quasi-Koszul rings and modules were defined  in \cite{GM1} by the condition (3) in Proposition \ref{qk and classical koszul}.

\begin{definition}\cite{GM1}
Let $R$ be a noetherian semiperfect ring and $J$ be its Jacobson radical.
\begin{itemize}
\item[(1)]
A finitely generated
$R$-module $M$ is  called {\it strongly quasi-Koszul} if $J^k\Ker d_n = \Ker d_n\cap J^{k+1}P_n$ for any $n,k\geqslant 0$, where $P_\bullet \to M \to 0$ is
a minimal projective resolution of $M$.
\item[(2)] $R$ is called a {\it strongly quasi-Koszul ring} if $R/J$ is a strongly quasi-Koszul $R$-module.
\end{itemize}
\end{definition}

The Auslander algebra of a finite-dimensional algebra of finite type over an algebraically closed field is a quasi-Koszul algebra \cite[Theorem 9.6]{GM1}. If $R$ is further assumed  being hereditary, then any quasi-Koszul $R$-module is strongly quasi-Koszul \cite[Lemma 5.1(b)]{GM1}. 
A ring which is quasi-Koszul but not strongly quasi-Koszul is given in Example \ref{qk-but-not-sqk}.

 To unify the notion of the classical graded Koszulity and the ungraded (strongly) quasi-Koszulity, we propose a definition of (quasi) Koszulity as follows.
\begin{definition} \label{our-definition}
Let $A$ be a left finite $\mathbb{N}$-graded ring generated in degree $1$  with $A_0$ noetherian semiperfect, $J$ be its graded Jacobson radical.
Let $M$ be a graded $A$-module with a minimal graded projective resolution
$$\cdots\to P_n\xrightarrow{d_n} P_{n-1}\to \cdots P_0 \xrightarrow{d_0} M\to 0.$$
\begin{itemize}
\item[(1)] If $J\Ker d_n = \Ker d_n\cap J^2P_n$ for all $n$, then $M$ is called {\it quasi-Koszul}.
\item[(2)] If  $J^k\Ker d_n = \Ker d_n\cap J^{k+1}P_n$ for any $n,k\geqslant 0$, then $M$ is called {\it Koszul}.
\end{itemize}

If $S=A/J$ is (quasi-)Koszul as a graded $A$-module, then $A$ is called a {\it (quasi-)Koszul ring}.
\end{definition}

It follows from the definition that any finite direct sum and any direct summand of (quasi-)Koszul modules are  (quasi-)Koszul.  The  shift of (quasi-)Koszul modules is also (quasi-)Koszul, unlike the classical graded Koszulity  defined by linear projective resolutions.

\begin{remark}
Suppose that $M$ is a left finite bounded below graded $A$-module. If $M$ is (quasi-)Koszul then $M_{l(M)}$ is (quasi-)Koszul as an $A_0$-module. In particular, if $A$ is (quasi-)Koszul then $A_0$ is (quasi-)Koszul. In fact,
without loss of generality, we may assume that $l(M)=0$. Suppose
$$\cdots\to P_n\xrightarrow{d_n} P_{n-1}\to \cdots \to P_0 \xrightarrow{d_0} M\to 0,$$
is a minimal graded projective resolution of $M$. Then it is clear that
$$\cdots\to (P_n)_0\xrightarrow{} (P_{n-1})_0\to \cdots \to (P_0)_0\xrightarrow{} M_0\to 0$$
is a minimal projective resolution of $M_0$ as an $A_0$-module.
\end{remark}

\begin{remark}
Note that if $A$ is a classical graded Koszul ring, then $A$ is generated in degree $1$ by \cite[Proposition 1.2.3]{BGS1}. So for a (quasi-) Koszul ring $A$ and a graded $A$-module $M$, if $A_0$ is artinian semisimple and $M$ is generated in degree $0$, our definition of (quasi-)Koszul modules coincides with the definition of the classical graded Koszul module by Proposition \ref{qk and classical koszul}; if $A$ is concentrated in degree $0$ and $M$ is finitely generated concentrated in degree $0$, then our definition  of (quasi-)Koszul modules coincides with the definition in the ungraded case (see Definition \ref{ungraded qK ring} and Theorem \ref{Theorem 4.4 in {GM1}}). Therefore Definition \ref{our-definition} unifies and generalizes the concepts of classical graded Koszul rings and ungraded (strongly) quasi-Koszul rings.

Although the finiteness condition is not necessary in the definition of classical Koszul rings and modules,  it is needed to have the duality, as remarked before \cite[Definition 1.2.4]{BGS1} (see also Theorem \ref{qK iff E(M) generated in degree 0}). In \cite{GM1,GM2}, the algebras considered are the graded quiver algebras, so the finiteness condition is satisfied automatically. In our case, the finiteness condition imposed  is  to guarantee the existence of minimal graded projective resolutions (see Proposition \ref{minimal projective resolution}).
\end{remark}

In the rest of this section we always assume that $A$ is a left finite $\mathbb{N}$-graded ring generated in degree $1$  with $A_0$ noetherian semiperfect,  $J$ is the graded Jacobson radical of $A$ and $S=A/J$.

\begin{example}
If $A$ has global dimension one, then $A$ is Koszul, as $0\to J\to A\to S\to 0$ is a minimal graded projective resolution of ${}_AS$.

If $A=kQ$, where $Q$ is a quiver of the following form, with at least one arrow of positive weight,
\begin{center}
\begin{tikzcd}
\bullet \arrow[r] & \bullet \arrow[r] & \cdots \arrow[r] & \bullet \arrow[lll, bend right]
\end{tikzcd}
\end{center}
 it follows from \cite[Proposition 6.6]{RR2} that $A$ is a generalized Artin-Schelter regular algebra (see Definition \ref{definition of GAS regluar}) of dimension one. In particular, if there is at least one arrow of weight zero and at least one arrow of weight one, then $A_0$ is a non-semisimple finite-dimensional algebra and $A$ is a locally finite graded algebra generated by $A_1$ over $A_0$. This is a simple example of Koszul rings.
\end{example}

Next, for later use, we show that any finitely generated quasi-Koszul module has a finitely generated minimal projective resolution.
\begin{lemma}\label{JM is fg}
If $M$ is a finitely generated graded $A$-module, then so is $JM$.
\end{lemma}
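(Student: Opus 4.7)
The plan is to decompose $JM$ using the direct sum description of the graded Jacobson radical and verify finite generation on each summand separately. Since $M$ is a finitely generated graded $A$-module, I may assume it is generated by finitely many homogeneous elements $m_1,\dots,m_n$. Because $J = J(A_0)\oplus A_{\geqslant 1}$, we get
\begin{equation*}
JM \;=\; J(A_0)\,M \;+\; A_{\geqslant 1}\,M,
\end{equation*}
and it suffices to show each of these two graded submodules is finitely generated over $A$.

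For the $A_{\geqslant 1}M$ piece, I would use that $A$ is generated in degree $1$ over $A_0$. This implies $A_{\geqslant 1} = AA_1$, so $A_{\geqslant 1}M$ is the $A$-submodule generated by the finite set $\bigcup_{i=1}^n A_1 m_i$. Since $A$ is left finite, $A_1$ is finitely generated as a left $A_0$-module; hence each $A_1 m_i$ is finitely generated as an $A_0$-module, and we obtain finitely many homogeneous generators of $A_{\geqslant 1}M$ as an $A$-module.

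For the $J(A_0)M$ piece, I would invoke the standing hypothesis that $A_0$ is left noetherian. Then $J(A_0)$ is a finitely generated left ideal of $A_0$; say $J(A_0)=\sum_{j} A_0 t_j$ with finitely many $t_j$. Consequently $J(A_0)m_i = \sum_j A_0 t_j m_i$ is finitely generated as an $A_0$-module, and
\begin{equation*}
J(A_0)M \;=\; \sum_{i=1}^n J(A_0) m_i
\end{equation*}
is finitely generated as an $A_0$-module, hence also as an $A$-module. Summing the two contributions yields that $JM$ is finitely generated over $A$.

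There is no real obstacle here; the only thing to be careful about is not to conflate ``finitely generated over $A$'' with ``finitely generated over $A_0$'' and to use each of the three standing hypotheses in the right place, namely $A$ being generated in degree $1$ (to handle $A_{\geqslant 1}M$ via $A_1$), $A$ being left finite (so that $A_1$ is finitely generated over $A_0$), and $A_0$ being left noetherian (so that $J(A_0)$ is finitely generated as a left ideal).
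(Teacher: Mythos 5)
Your proof is correct in substance and follows essentially the same route as the paper: the paper first observes that $J=Ax_1+\cdots+Ax_r+Ay_1+\cdots+Ay_s$ is a finitely generated left ideal (using noetherianity of $A_0$ for $J(A_0)$, left finiteness for $A_1$, and generation in degree $1$ for $A_{\geqslant 1}$), and then, since $J$ is a two-sided ideal, $JM=Jm_1+\cdots+Jm_t=\sum Ax_im_j+\sum Ay_im_j$. One step of yours is imprecise, though easily repaired: the equality $J(A_0)M=\sum_{i}J(A_0)m_i$ is false in general, because $M=\sum_i Am_i$ gives $J(A_0)M=\sum_i J(A_0)Am_i$, and $J(A_0)A_{\geqslant 1}m_i$ need not lie in $\sum_i J(A_0)m_i$; for the same reason $J(A_0)M$ is spread over all degrees and is not finitely generated over $A_0$, nor is it an $A$-submodule. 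The repair is immediate: $J(A_0)A\subseteq J(A_0)+A_{\geqslant 1}$, so $J(A_0)M\subseteq \sum_i J(A_0)m_i+A_{\geqslant 1}M$, and hence $JM=\sum_{i,j}At_jm_i+\sum_{i,j}Ay_jm_i$ is finitely generated; the paper sidesteps this entirely by working with the two-sided ideal $J$ rather than with $J(A_0)$ separately.
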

\begin{proof} Since $A_0$ is noetherian, $J(A_0)=A_0x_1+\cdots + A_0x_r$ for some $x_i\in A_0$.
Since  $A$ is left finite, we may assume that  $A_1=A_0y_1+\cdots+A_0y_s$ for some $y_j \in A_1$. Then $A_{\geqslant 1}=Ay_1+\cdots+Ay_s$ as $A$ is generated in degree $1$.   Therefore $J=Ax_1+\cdots + Ax_r+Ay_1 +\cdots +Ay_s$ is a finitely generated $A$-module.

Suppose that $M=Am_1+\cdots +Am_t$. Then $JM=Jm_1+\cdots + Jm_t=\Sigma Ax_im_j+\Sigma Ay_{i}m_j$ is a finitely generated $A$-module.
\end{proof}

\begin{proposition}\label{minimal is fg}
If ${}_AM$ is a finitely generated  quasi-Koszul module, then any module $P_n$ in the minimal graded projective resolution $P_\bullet \to M \to 0$ of $M$ is finitely generated.
\end{proposition}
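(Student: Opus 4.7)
The plan is to induct on $n$, showing at each step that the top of the $n$th syzygy is a finitely generated module over the ring $S = A/J$; the projective cover $P_{n+1}$ will then inherit finite generation by the same construction used to build minimal projective resolutions. The essential tool is that $A_0$ being noetherian semiperfect forces $S = A/J = A_0/J(A_0)$ to be artinian semisimple, so any finitely generated $S$-module is a finite direct sum of simples and in particular all of its submodules are again finitely generated.

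For the base case $n=0$, I would use that $M$ finitely generated implies $M/JM$ is a finitely generated, hence semisimple of finite length, $S$-module. The construction in the proof of Proposition \ref{minimal projective resolution} then yields $P_0$ as a finite direct sum of shifted indecomposable projectives $Ae_i$, and so $P_0$ is finitely generated over $A$.

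For the inductive step, suppose $P_n$ is finitely generated. By Lemma \ref{JM is fg}, $JP_n$ is finitely generated; therefore $JP_n/J^2 P_n$ is a finitely generated $A$-module annihilated by $J$, hence a finitely generated $S$-module. Minimality of the resolution gives $\Ker d_n \subseteq JP_n$, and the quasi-Koszul identity $J\Ker d_n = \Ker d_n \cap J^2 P_n$ provides an embedding
$$\Ker d_n / J\Ker d_n \;=\; \Ker d_n / (\Ker d_n \cap J^2 P_n) \;\hookrightarrow\; JP_n / J^2 P_n.$$
Thus $\Ker d_n / J \Ker d_n$ is finitely generated over the artinian semisimple ring $S$, and the same argument as in the base case shows that the graded projective cover $P_{n+1}$ of $\Ker d_n$ is a finite direct sum of shifted $Ae_i$'s, hence finitely generated.

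The main obstacle is precisely to see that the quasi-Koszul hypothesis is exactly what makes the induction go through: it is this identity that realizes the otherwise uncontrolled top $\Ker d_n/J\Ker d_n$ as a submodule of the already-finite piece $JP_n/J^2 P_n$. Since $A$ itself is not assumed to be noetherian, one cannot argue directly that the submodule $\Ker d_n$ of the finitely generated module $P_n$ is finitely generated, so without the quasi-Koszul identity there is no evident mechanism to propagate finite generation along the syzygies.
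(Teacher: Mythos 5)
Your proof is correct and follows essentially the same route as the paper's: both use Lemma \ref{JM is fg} to get $JP_n$ finitely generated and then use the quasi-Koszul identity $J\Ker d_n = \Ker d_n \cap J^2P_n$ to embed $\Ker d_n/J\Ker d_n$ into the finitely generated semisimple module $JP_n/J^2P_n$, concluding finite generation of the next projective (the paper phrases the last step via Nakayama's lemma applied to $\Ker d_n$, which is equivalent to your direct argument about the projective cover).
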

\begin{proof}
Let $\cdots\to P_n\xrightarrow{d_n} P_{n-1}\to \cdots P_0 \xrightarrow{d_0} M\to 0$ be a minimal graded projective resolution of $M$. Since $M$ is finitely generated, $P_0$ is finitely generated.

By Lemma \ref{JM is fg}, $JP_0$ is finitely generated.
Since $J\Ker d_0=J^2P_0\cap \Ker d_0$, the natural map $\Ker d_0/J\Ker d_0\to JP_0/J^2P_0$ is injective. Hence $\Ker d_0/J\Ker d_0$ is a finitely generated $A/J$-module, and so it is a finitely generated $A$-module. By Nakayama's lemma, $\Ker d_0$ is finitely generated. The proof is completed by induction.
\end{proof}
\subsection{Generating property of quasi-Koszul modules and rings}
The following result characterizes quasi-Koszul modules (quasi-Koszul rings) in terms of Yoneda Ext-groups. The proof is based on the ideas in \cite[Sections 3 and 4]{GM1}.

Let $\Omega^i$ be the $i$-th syzygy functor.
\begin{theorem}\label{qK iff E(M) generated in degree 0}
Let $A$ be a left finite $\mathbb{N}$-graded ring generated in degree $1$  with $A_0$ noetherian semiperfect.
\begin{itemize}
\item[(1)] Suppose that $M$ is a left finite bounded below graded $A$-module. Then $M$ is quasi-Koszul if and only if, for any $n>0$, $$\gExt_A^1(S,S)\cdot \gExt_A^{n-1}(M,S)=\gExt^n_A(M,S).$$
\item[(2)] $A$ is a quasi-Koszul ring if and only if $\gExt_A^{\bullet}(S,S)$ is generated by  $\gExt^1_A(S,S)$ over $\gHom_A(S,S)$, that is, generated in degree $1$.
\end{itemize}
\end{theorem}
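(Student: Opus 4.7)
The plan is to prove part (1); part (2) will follow by specializing to $M=S$ and inducting on $n$.

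Fix a minimal graded projective resolution $P_\bullet \to M \to 0$, and observe that $Q_0 = A$ is a graded projective cover of $S = A/J$ (the graded Jacobson radical $J$ is graded superfluous in $A$), so $\Omega^1 S = J$ and a minimal resolution $Q_\bullet \to S$ has $Q_1$ a projective cover of $J$. Because any $A$-linear map into $S$ kills $J$, minimality forces all codifferentials in $\gHom_A(P_\bullet, S)$ and $\gHom_A(Q_\bullet, S)$ to vanish, giving $\gExt^n_A(M,S) \cong \gHom_A(P_n, S)$. Writing $K_n = \Ker d_n$, a second description
\[
\gExt^n_A(M, S) \;\cong\; \gHom_A(K_{n-1}/JK_{n-1}, S)
\]
follows from the long exact sequence attached to $0 \to K_{n-1} \to P_{n-1} \to \Omega^{n-1}M \to 0$: the restriction map $\gHom_A(P_{n-1}, S) \to \gHom_A(K_{n-1}, S)$ vanishes because $K_{n-1} \subseteq JP_{n-1}$, so the coboundary $\gHom_A(K_{n-1}, S) \to \gExt^n_A(M,S)$ is an isomorphism. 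The same device yields $\gExt^1_A(S,S) \cong \gHom_A(J/J^2, S)$.

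Next, I unwind the Yoneda product explicitly. Given $\phi \in \gExt^1_A(S,S)$ represented by $\tilde\phi: Q_1 \to S$ (factoring as $\bar{\tilde\phi}: J/J^2 \to S$) and $\psi \in \gExt^{n-1}_A(M,S)$ represented by $\tilde\psi: P_{n-1} \to S$, choose a lift $\tilde\psi^0: P_{n-1} \to Q_0 = A$; then $\tilde\psi^0 \circ d_n$ lands in $J$ (its composite with $A \to S$ is $\tilde\psi \circ d_n = 0$) and lifts through $Q_1 \twoheadrightarrow J$ to $\tilde\psi^1: P_n \to Q_1$, with $\phi\cdot\psi$ represented by $\tilde\phi\circ\tilde\psi^1$. $A$-linearity of $\tilde\psi^0$ makes it respect $J$-adic filtrations, inducing $\overline{\tilde\psi^0}: JP_{n-1}/J^2P_{n-1} \to J/J^2$ independently of the choice of lift (two lifts differ by a map into $J$, which sends $JP_{n-1}$ into $J^2$). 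Using $Q_1/JQ_1 \cong J/J^2$ (from $Q_1$ being the projective cover of $J$), a direct diagram chase shows that under the isomorphism of the previous paragraph, $\phi\cdot\psi$ corresponds to the composite
\[
K_{n-1}/JK_{n-1} \xrightarrow{\iota} JP_{n-1}/J^2P_{n-1} \xrightarrow{\overline{\tilde\psi^0}} J/J^2 \xrightarrow{\bar{\tilde\phi}} S,
\]
where $\iota$ is the canonical map induced by $K_{n-1} \hookrightarrow JP_{n-1}$.

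This factorization immediately gives $\image(\mu_n) \subseteq \image(\iota^*)$, where $\mu_n$ is the Yoneda product and $\iota^*$ restriction along $\iota$. For the reverse containment, use the decomposition $P_{n-1} = \bigoplus Ae_j(l_j)$ (available by semiperfectness of $A_0$) to identify $JP_{n-1}/J^2P_{n-1} = (J/J^2)\otimes_{A/J}(P_{n-1}/JP_{n-1})$; one then verifies $\overline{\tilde\psi^0} = \id_{J/J^2}\otimes\psi$ under this identification, so $\bar{\tilde\phi}\circ\overline{\tilde\psi^0}$ realizes the natural pairing
\[
\gHom_A(J/J^2, S)\otimes\gHom_A(P_{n-1}/JP_{n-1}, S)\to\gHom_A(JP_{n-1}/J^2P_{n-1}, S),
\]
which is surjective by semisimplicity of $A/J = A_0/J(A_0)$ together with left-finiteness of $P_{n-1}$. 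Hence $\image(\mu_n) = \image(\iota^*)$. Semisimplicity of $A/J$ also forces $\iota^*$ to be surjective iff $\iota$ is (split) injective, i.e., iff $JK_{n-1} = K_{n-1}\cap J^2P_{n-1}$. Running this over all $n\geqslant 1$ proves (1), and part (2) follows by setting $M = S$ and noting that, by induction, surjectivity of every $\mu_n$ is equivalent to $\gExt^\bullet_A(S,S)$ being generated by $\gExt^1_A(S,S)$ over $\gHom_A(S,S)$.

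The main obstacle will be the reverse containment $\image(\iota^*) \subseteq \image(\mu_n)$: identifying the Yoneda product with the natural tensor-to-Hom pairing and establishing its surjectivity requires careful bookkeeping of lifts together with a graded-degree-wise reduction to finite-dimensional dualization over the semisimple ring $A_0/J(A_0)$, leveraging both the semiperfectness of $A_0$ and the left-finiteness hypotheses.
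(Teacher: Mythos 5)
Your proposal is correct and follows essentially the same route as the paper: both identify $\gExt^n_A(M,S)$ with $\gHom_A(\Omega^n M/J\Omega^n M,\, S)$, reduce the Yoneda product to the canonical map $\iota:\Omega^n M/J\Omega^n M\to JP_{n-1}/J^2P_{n-1}$, and use semisimplicity of graded $S$-modules together with left-finiteness (finitely many graded simples per degree) to translate surjectivity of the multiplication map into injectivity of $\iota$, i.e., the quasi-Koszul condition. The paper performs your ``reverse containment'' by decomposing $P_{n-1}/JP_{n-1}$ into graded simples and writing a homogeneous class as a finite sum $\sum g_jh_j$, and its pushout argument in the converse direction is exactly your inclusion $\image(\mu_n)\subseteq\image(\iota^*)$; packaging both directions into the single identity $\image(\mu_n)=\image(\iota^*)$ is tidier but not a different argument.
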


\begin{proof}
It suffices to prove the first statement. Suppose $M$ is quasi-Koszul. Let $\cdots\to P_n\xrightarrow{d_n} P_{n-1}\to \cdots \to P_0 \xrightarrow{d_0} M\to 0$ be a minimal graded projective resolution of $M$.
By Proposition  \ref{minimal projective resolution}, each $P_n$ is left finite.
It follows from the minimalism of the projective resolution  that
$$\gExt_A^n(M,S)=\gHom_A(P_n,S)\cong \gHom_A(\Omega^n(M),S).$$

Let $\Omega^n=\Omega^n(M)=\im d_n$. Suppose $g\in \gHom_A(\Omega^n,S)$ is a homogeneous element, say, of degree $t$. Since $g(J\Omega^n)=0$, $g$ factors through $\Omega^n/J\Omega^n$, that is, $g=\bar{g}p$ where $p: \Omega^n\to \Omega^n/J\Omega^n$ is the projection and $\bar{g}:\Omega^n/J\Omega^n\to S$ is induced by $g$.
\begin{center}
\begin{tikzcd}
\Omega^n \arrow[dd, "p"] \arrow[rr, "i", hook] \arrow[rd, "g"] &   & JP_{n-1} \arrow[dd] \arrow[ld, "g'"', dashed] \arrow[lldd, "p'", dashed, bend left=20] \\
                                                        & S &                                                                             \\
\Omega^n/J\Omega^n \arrow[rr, "\bar{i}"', tail] \arrow[ru, "\bar{g}", dashed] &   & JP_{n-1}/J^2P_{n-1}
\end{tikzcd}
\end{center}


Since $J\Omega^n=\Omega^n \cap  J^2P_{n-1}$ by assumption, the induced morphism $\bar{i}$ is injective. Since both $\Omega^n/J\Omega^n$ and $JP_{n-1}/J^2P_{n-1}$ are semisimple $S$-modules, $\bar{i}$ splits and there is a morphism $p':JP_{n-1}\to \Omega^n/J\Omega^n$ such that $p=p'i$ .
Let $g'=\bar{g}p'$. Then $g=g'i$.

Let $\pi:P_{n-1}\to P_{n-1}/JP_{n-1}$ be the natural projection. Then $\pi d_n=0$ and $\overline{\pi}\in \gExt_A^{n-1}(M,P_{n-1}/JP_{n-1})$.


We may assume that $P_{n-1}/JP_{n-1} = \mathop{\oplus}\limits_{j}S_j$ where $S_j$ are some graded simple $A$-modules. Since $P_{n-1}$ is left finite, there are only finitely many $S_j$ of degree $t$.  Suppose $Q_j\to P'_j\to S_j\to 0$ is the starting part of a minimal graded projective resolution of $S_j$ such that $P_{n-1} = \mathop{\oplus}\limits_{j} P'_j$ and $\mathop{\oplus}\limits_{j}Q_j \stackrel{\pi_2}{\to} JP_{n-1} \to 0$ is a projective cover of $JP_{n-1}$. Let $Q =\mathop{\oplus}\limits_{j}Q_j$.

Then we have the following commutative diagram
\begin{center}
\begin{tikzcd}
P_n \arrow[r, "\pi_1", two heads] \arrow[dd, "g\pi_1"', bend right] \arrow[d, "\exists h", dashed] & \Omega^n \arrow[d, "i", hook] \arrow[r, hook]    & P_{n-1} \arrow[rd, "\pi"] \arrow[d, "="] \arrow[r] & \cdots \arrow[r]           & M \arrow[r] & 0 \\
Q \arrow[r, "\pi_2", two heads] \arrow[d, "g'\pi_2"]                                               & JP_{n-1} \arrow[ld, "g'"] \arrow[r, hook] & P_{n-1} \arrow[r, "\pi"]                           & \frac{P_{n-1}}{JP_{n-1}} \arrow[r] & 0           &   \\
S                                                                                                  &                                           &                                                    &                            &             &
\end{tikzcd}
\end{center}
where $h$ is a lifting  of $\pi$, and  $d_n: P_n \to P_{n-1}$ factors through $\Omega^n$ via $\pi_1$. 




Now
$g'\pi_2 \in \gHom_A(Q,S) = \gExt_A^1(\dfrac{P_{n-1}}{JP_{n-1}}, S)$, which is of degree $t$. Since $Q =\mathop{\oplus}\limits_{j}Q_j$ is left finite, there is a finite subset $\{Q_1, Q_2, \cdots, Q_m\}$ of $\{Q_j\}$ such that $$g'\pi_2 \in \gHom_A(\mathop{\oplus}\limits_{j=1}^mQ_j, S) \cong \mathop{\oplus}\limits_{j=1}^m\gHom_A(Q_j,S) = \mathop{\oplus}\limits_{j=1}^m\gExt_A^1(S_j,S).$$

Let $g_j = g'\pi_2l_j: Q_j \to S$ and $h_j = p_jh: P_n \to Q_j$ for $1 \leqslant j \leqslant m$, where  $l_j: Q_j \to Q$ and $p_j: Q \to Q_j$ are the canonical injections and  projections respectively. It follows from $g\pi_1 = g'\pi_2 h$ that $g\pi_1=\sum\limits_{j=1}^m g'\pi_2 l_j p_jh=\sum\limits_{j=1}^m g_jh_j$, and by the definition of Yoneda products,
$$\gExt^n_A(M,S) \ni g\pi_1=\sum\limits_{j=1}^m g_j h_j\in \gExt_A^1(S_j,S)\cdot \gExt_A^{n-1}(M,S_j).$$


 By a degree shifting, we may assume that $S_j$ is concentrated in degree $0$, and so $\gExt_A^{n-1}(M,S_j)$ and $\gExt_A^1(S_j,S)$ can be viewed as direct summands of the groups $\gExt_A^{n-1}(M,S)$ and $\gExt_A^1(S,S)$ respectively. Therefore $$g\pi_1\in \gExt_A^1(S,S)\cdot \gExt_A^{n-1}(M,S).$$
It follows that
$\gExt_A^1(S,S)\cdot \gExt_A^{n-1}(M,S)=\gExt^n_A(M,S).$

Conversely, suppose $\gExt_A^1(S,S)\cdot \gExt_A^{n-1}(M,S)=\gExt^n_A(M,S)$ for all $n>0$.
Let $\Omega=\Omega(M)=\Ker d_0$.

First, we claim that any $g \in \gHom_A(\Omega,S)$ factors through $JP_0$.

Since
$g \in \gHom_A(\Omega,S) \cong \gExt_A^1(M,S)=\gExt_A^1(S,S)\cdot \gHom_A(M,S),$
$g\pi_1=\sum y_j\cdot  x_j$ for some  $x_j\in \gHom_A(M,S)$ and $y_j\in\gExt^1_A(S,S)$.

Let $Q \to J \to 0$ be a graded projective cover of $J$. Then
$$y_j\in\gExt^1_A(S,S) =\gHom_A(Q,S) \cong \gHom_A(J,S).$$
Now, for any $j$, we have the following commutative diagram
\begin{center}
\begin{tikzcd}
P_1 \arrow[d, "x_j^1"] \arrow[r, "\pi_1", two heads] & \Omega \arrow[r, hook] \arrow[d, "x_j'"]  & P_0 \arrow[d, "x_j^0"] \arrow[r] & M \arrow[r] \arrow[d, "x_j"] & 0 \\
Q \arrow[d, "y_j"] \arrow[r, two heads]              & J \arrow[ld, "y'_j"] \arrow[r, hook] & A \arrow[r]                      & S \arrow[r]                  & 0 \\
S                                                    &                                      &                                  &                              &
\end{tikzcd}
\end{center}
where $x_j^0,x_j^1,x_j'$ are the lifting of $x_j$. Then $g=\sum y_j'x_j'$.

Consider the following commutative diagram
\begin{center}
\begin{tikzcd}
0 \arrow[r] & \Omega \arrow[d, "x_j'"] \arrow[r] & P_0 \arrow[r] \arrow[d, "x_j^0"] & M \arrow[r] \arrow[d, "x_j"] & 0 \\
0 \arrow[r] & J \arrow[d, "y_j'"] \arrow[hook, r]  & A \arrow[d, "f_j"] \arrow[r]       & S \arrow[r] \arrow[d, "="]   & 0 \\
0 \arrow[r] & S \arrow[r, "\alpha"]         & E \arrow[r, "\beta"]             & S \arrow[r]                  & 0
\end{tikzcd}
\end{center}
where $E$ is the push-out of $J\hookrightarrow A$ and $J\xrightarrow{y_j'} S$.
Since $f_jx_j^0(JP_0)\subseteq \Ker \beta=\alpha(S)$, $y_j'x_j'$ factors through $JP_0$. It follows that $g$ factors through $JP_0$.
Hence $g(x)=0$ for any $x\in \Omega \cap J^2P_0$. Therefore
$$\Omega \cap J^2P_0\subseteq \cap \{\Ker g \mid g\in \gHom_A(\Omega,S)\}=J\Omega \subseteq \Omega \cap J^2P_0.$$
Hence $J\Ker d_0 = \Ker d_0 \cap J^2P_0$.

Note that
\begin{align*}
\gExt_A^1(\Ker d_0,S)&\cong \gHom_A(\Ker d_1,S)\\
&\cong \gExt_A^2(M,S)\\
&=\gExt^1_A(S,S)\cdot\gExt_A^1(M,S)\\
&=\gExt^1_A(S,S)\cdot\gHom_A(\Ker d_0,S).
\end{align*}
The proof is completed by replacing $M$ with $\Ker d_0$ and by induction.
\end{proof}


\section{Yoneda Ext rings and Koszul Duality}
In this section  we study the Yoneda Ext ring $\gExt_A^\bullet(S,S)$ of $A$, where $A$ is a left finite $\mathbb{N}$-graded ring generated in degree $1$  with $A_0$ noetherian semiperfect and $S=A/J$. We show that $A$ is a Koszul ring if and only if the Yoneda Ext ring $\gExt_A^\bullet(S,S)$ is a left finite classical Koszul ring
in the classical sense \cite[Definitions 1.2.1 and 1.2.4]{BGS1}. A Koszul duality theory is also given in this section.

\subsection{Some preparatory results}
The following lemmas and propositions were proved essentially in \cite[Section 5]{GM1}.
They are still true in our more general framework.

\begin{lemma}\label{fact 1}
Let $0\to X\to Y\to Z\to 0$ be an exact sequence of left finite bounded below graded $A$-modules such that $JX=X \cap JY$. If $P'$ and $P''$ are graded projective covers of $X$ and $Z$ respectively, then $P=P'\oplus P''$ is a graded projective cover of $Y$.
\end{lemma}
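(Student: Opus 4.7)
The plan is to use the hypothesis $JX = X \cap JY$ to reduce the problem modulo $J$, where everything becomes semisimple, and then lift back via Nakayama's lemma (Lemma \ref{Nak lemma}), which applies because $X,Y,Z$ and hence $P', P'', P$ are left finite and bounded below.

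First I would extract from the hypothesis the short exact sequence
\[
0\to X/JX\to Y/JY\to Z/JZ\to 0
\]
of graded $S$-modules. Right exactness is automatic from $Y\twoheadrightarrow Z$; the left map is the one induced by $X\hookrightarrow Y$, and it is injective precisely because $X\cap JY=JX$. Since $S=A/J=A_0/J(A_0)$ sits in degree $0$ and is semisimple (as $A_0$ is noetherian semiperfect), every graded $S$-module is semisimple, so this sequence splits in the graded category. Thus I can fix a graded $S$-linear splitting $Z/JZ\hookrightarrow Y/JY$ whose composition with the projection is the identity on $Z/JZ$.

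Next I would build the candidate projective cover $\pi:P=P'\oplus P''\to Y$. On the first summand take $P'\twoheadrightarrow X\hookrightarrow Y$, using the given cover of $X$. On the second summand, use that $P''$ is graded projective and $Y\twoheadrightarrow Z$ is surjective to lift the cover $P''\twoheadrightarrow Z$ to a graded morphism $P''\to Y$ (of degree zero). Now consider the induced map $\bar\pi:P/JP\to Y/JY$. Under the decomposition $P/JP=P'/JP'\oplus P''/JP''$, the first component is the composition $P'/JP'\xrightarrow{\cong} X/JX\hookrightarrow Y/JY$ (the covers induce isomorphisms mod $J$ by Nakayama), while the second component, by construction, is a section of $Y/JY\twoheadrightarrow Z/JZ$ (using $P''/JP''\xrightarrow{\cong}Z/JZ$). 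Together they realize the splitting of the sequence above, so $\bar\pi$ is an isomorphism.

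Finally I would apply Nakayama to deduce the cover property. From $\bar\pi$ surjective and $Y$ bounded below and left finite, Lemma \ref{Nak lemma} gives that $\pi$ itself is surjective. For superfluity of $\Ker\pi$, the isomorphism $\bar\pi$ forces $\Ker\pi\subseteq JP$; then if $\Ker\pi+L=P$ for a graded submodule $L\subseteq P$, the quotient $P/L$ satisfies $P/L=J(P/L)$, and since $P/L$ is left finite and bounded below, Lemma \ref{Nak lemma} forces $P/L=0$, i.e.\ $L=P$. Hence $\pi$ is a graded projective cover of $Y$.

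The main subtlety I expect is purely bookkeeping: to perform the splitting and the lift $P''\to Y$ within the graded category (so that $\pi$ is homogeneous of degree $0$) and to verify that the chosen lift really induces the section of $Y/JY\twoheadrightarrow Z/JZ$ that matches the splitting. Once this is done, all the remaining steps are direct applications of Nakayama's lemma in the form available here.
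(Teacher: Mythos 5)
Your proof is correct and follows essentially the same route as the paper's: both build the map $P'\oplus P''\to Y$ by the horseshoe construction, use the hypothesis $JX=X\cap JY$ to get the short exact sequence modulo $J$ and hence $P/JP\cong Y/JY$, and conclude via Nakayama's lemma. The only difference is cosmetic: you invoke semisimplicity of $S$ to split the sequence, which is not needed (the five/snake lemma on the mod-$J$ diagram suffices, as in the paper), and you spell out the superfluity of $\Ker\pi$ which the paper leaves implicit.
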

\begin{proof}
Obviously, there is  an  exact commutative diagram
\begin{center}
\begin{tikzcd}
0 \arrow[r] & P' \arrow[r] \arrow[d]        & P \arrow[r] \arrow[d]         & P'' \arrow[r] \arrow[d]       & 0 \\
0 \arrow[r] & X \arrow[d] \arrow[r]         & Y \arrow[d] \arrow[r]         & Z \arrow[d] \arrow[r]         & 0 \\
            & 0                             & 0                             & 0
\end{tikzcd}
\end{center}
Since $JX=X \cap JY$, we get the following  exact commutative diagram by tensoring the above diagram with $A/J\otimes_A -$.
\begin{center}
\begin{tikzcd}
0 \arrow[r] & P'/JP' \arrow[r] \arrow[d]        & P/JP \arrow[r] \arrow[d]         & P''/JP'' \arrow[r] \arrow[d]       & 0 \\
0 \arrow[r] & X/JX  \arrow[r]         &  Y/JY  \arrow[r]         &  Z/JZ \arrow[r]         & 0
\end{tikzcd}
\end{center}
It follows from $P'/JP'\cong X/JX$ and $P''/JP''\cong Z/JZ$ that $P/JP \cong Y/JY$. By Proposition \ref{minimal projective resolution} (1), both $P'$ and $P''$ are bounded below and left finite, so is $P$. Hence $P$ is a graded projective cover of $Y$.
\end{proof}

By Lemma  \ref{fact 1}, there is an exact commutative diagram, which is used frequently later in this paper.
\begin{equation}\label{comm-diagram-2}
\begin{tikzcd}
            & 0 \arrow[d]                   & 0 \arrow[d]                   & 0 \arrow[d]                   &   \\
0 \arrow[r] & \Omega(X) \arrow[r] \arrow[d] & \Omega(Y) \arrow[r] \arrow[d] & \Omega(Z) \arrow[r] \arrow[d] & 0 \\
0 \arrow[r] & JP' \arrow[r] \arrow[d]       & JP \arrow[r] \arrow[d]        & JP'' \arrow[r] \arrow[d]      & 0 \\
0 \arrow[r] & JX \arrow[d] \arrow[r]        & JY \arrow[d] \arrow[r]        & JZ \arrow[d] \arrow[r]        & 0 \\
            & 0                             & 0                             & 0                             &
\end{tikzcd}
\end{equation}

\begin{lemma}\label{exact sequence of syzygy}
Let $0\to X\to Y\to Z\to 0$ be an exact sequence of left finite bounded below graded $A$-modules with $JX =  X \cap JY$.  Suppose that $X$ is quasi-Koszul (resp. Koszul). Then for any $n > 0$,
$$0\to \Omega^n(X)\to \Omega^n(Y)\to \Omega^n(Z)\to 0$$
is exact, and $J\Omega^n(X)=\Omega^n(X) \cap J\Omega^n(Y)$ (resp. $J^k\Omega^n(X)=\Omega^n(X) \cap  J^k\Omega^n(Y)$ for all $k > 0$).
\end{lemma}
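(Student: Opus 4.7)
The plan is to prove both assertions by parallel induction on $n$, using Lemma \ref{fact 1} and the diagram \eqref{comm-diagram-2} as the engine at each syzygy step.

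For the base case $n=1$: Lemma \ref{fact 1} provides a graded projective cover $P=P'\oplus P''$ of $Y$ compatible with the chosen projective covers $P'\to X$ and $P''\to Z$, and the top row of \eqref{comm-diagram-2} is precisely the desired short exact sequence $0\to \Omega(X)\to \Omega(Y)\to \Omega(Z)\to 0$. For the intersection assertion I would combine the minimality property $\Omega(Y)\subseteq JP$ with the direct sum decomposition $P=P'\oplus P''$: in the Koszul case,
$$\Omega(X)\cap J^k\Omega(Y)\subseteq \Omega(X)\cap J^k(JP)=\Omega(X)\cap J^{k+1}P=\Omega(X)\cap J^{k+1}P',$$
where the last equality uses $\Omega(X)\subseteq P'$ together with the fact that $J$-power submodules of $P$ split along the direct sum. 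The Koszul hypothesis on $X$ identifies this last term with $J^k\Omega(X)$. The quasi-Koszul case is the specialization $k=1$ via the identity $\Omega(X)\cap J^2P'=J\Omega(X)$. The reverse inclusion $J^k\Omega(X)\subseteq \Omega(X)\cap J^k\Omega(Y)$ is immediate from $\Omega(X)\subseteq \Omega(Y)$.

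For the inductive step: to apply Lemma \ref{fact 1} at syzygy level $n$ I need the level-$n$ sequence to consist of left finite bounded below modules satisfying $J\Omega^n(X)=\Omega^n(X)\cap J\Omega^n(Y)$. Left-finiteness and boundedness below of the $n$-th syzygies follow from Proposition \ref{minimal projective resolution} together with the fact that, since $A_0$ is noetherian, submodules of left finite bounded below graded modules inherit left-finiteness. The intersection hypothesis is exactly the quasi-Koszul conclusion of the induction hypothesis (which is available even in the Koszul case, since the Koszul conclusion subsumes it at $k=1$). Lemma \ref{fact 1} then yields a projective cover $P_n=P'_n\oplus P''_n$ of $\Omega^n(Y)$ with $P'_n$ and $P''_n$ graded projective covers of $\Omega^n(X)$ and $\Omega^n(Z)$. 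Replaying the base-case computation with $P'_n$ in place of $P'$, and applying the (quasi-)Koszul identity of $X$ at the appropriate syzygy index, delivers both the exactness of $0\to \Omega^{n+1}(X)\to \Omega^{n+1}(Y)\to \Omega^{n+1}(Z)\to 0$ and the intersection formula at level $n+1$.

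The main obstacle, which is really the mechanism that makes the iteration work, is that the minimal resolution of $X$ must embed as a direct summand of the minimal resolution of $Y$, so that the (quasi-)Koszul property of $X$—a statement about $\Omega^{n+1}(X)\cap J^{k+1}P'_n$—translates to the a priori different intersection $\Omega^{n+1}(X)\cap J^{k+1}P_n$ involving the larger projective cover coming from $Y$. Lemma \ref{fact 1} guarantees this summand structure at every level, and it is precisely the propagation of the condition $JX=X\cap JY$ through syzygies that allows the inductive argument to keep going.
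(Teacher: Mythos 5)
Your proof is correct and follows essentially the same route as the paper's: Lemma \ref{fact 1} and diagram \eqref{comm-diagram-2} drive an induction on $n$, with the intersection formula extracted from the (quasi-)Koszulity of $X$, the minimality $\Omega^{n+1}(Y)\subseteq JP_n$, and the splitting $J^{k+1}(P'_n\oplus P''_n)=J^{k+1}P'_n\oplus J^{k+1}P''_n$. The paper packages the same computation by applying $A/J^k\otimes_A -$ to the diagram and reading off injectivity of the induced map $A/J^k\otimes_A\Omega^n(X)\to A/J^k\otimes_A\Omega^n(Y)$, whereas you argue directly with submodule intersections; the two are equivalent.
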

\begin{proof}
Suppose $X$ is quasi-Koszul. 
By applying the functor $A/J\otimes_A -$ to  diagram  \eqref{comm-diagram-2}, it follows from the quasi-Koszulity of $X$ that the following is an exact commutative diagram.

\begin{tikzcd}[column sep=small]
            & 0 \arrow[d]                               &                                           &                                           &   \\
            & A/J\otimes_A\Omega(X) \arrow[r] \arrow[d] & A/J\otimes_A\Omega(Y) \arrow[r] \arrow[d] & A/J\otimes_A\Omega(Z) \arrow[r] \arrow[d] & 0 \\
0 \arrow[r] & A/J\otimes_AJP' \arrow[r] \arrow[d]       & A/J\otimes_AJP \arrow[r] \arrow[d]        & A/J\otimes_AJP'' \arrow[r] \arrow[d]      & 0 \\
            & A/J\otimes_AJX \arrow[d] \arrow[r]        & A/J\otimes_AJY \arrow[d] \arrow[r]        & A/J\otimes_AJZ \arrow[d] \arrow[r]        & 0 \\
            & 0                                         & 0                                         & 0                                         &
\end{tikzcd}
Hence $A/J\otimes_A \Omega(X)\to A/J\otimes_A \Omega(Y)$ is injective, that is,  $J\Omega(X)= \Omega(X) \cap J\Omega(Y)$.
Therefore, the exact sequence $0\to \Omega(X)\to \Omega(Y)\to \Omega(Z)\to 0$ satisfies the same hypothesis as $0\to X\to Y\to Z\to 0$ do.
Hence the proof can be completed by induction.

For the Koszul case, it is left to prove that $J^k\Omega^n(X)=\Omega^n(X) \cap  J^k\Omega^n(Y)$ for all $k > 0$ which can be shown similarly  by applying the functor $A/J^k\otimes_A -$ instead of $A/J\otimes_A -$.
\end{proof}

\begin{corollary}\label{strongly quasi Koszul of exact sequence}
Let $0\to X\to Y\to Z\to 0$ be an exact sequence of left finite bounded below graded $A$-modules with  $JX =  X \cap JY$. If $X$ is Koszul (resp.  quasi-Koszul), then $Z$ is Koszul (resp. quasi-Koszul) if and only if  $Y$ is Koszul and $J^kX =  X \cap J^kY$ for all $k$ (resp. quasi-Koszul and $J^2\Omega^n(X) =  \Omega^n(X) \cap J^2\Omega^n(Y)$ for all $n \geqslant 0$).
\end{corollary}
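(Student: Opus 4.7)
The plan is to iterate the horseshoe construction of Lemma \ref{fact 1}: since $X$ is quasi-Koszul (resp.\ Koszul), Lemma \ref{exact sequence of syzygy} supplies $J\Omega^n(X) = \Omega^n(X) \cap J\Omega^n(Y)$ for all $n \geqslant 1$, which together with the standing hypothesis $JX = X \cap JY$ legitimizes applying Lemma \ref{fact 1} at every syzygy step. This produces minimal graded projective resolutions with $P^Y_n = P^X_n \oplus P^Z_n$, block-upper-triangular differentials $d^Y_n$ whose horseshoe connecting maps $\alpha_n: P^Z_n \to P^X_{n-1}$ satisfy $\alpha_n(P^Z_n) \subseteq J P^X_{n-1}$ by minimality, and short exact sequences $0 \to \Omega^{n+1}(X) \to \Omega^{n+1}(Y) \to \Omega^{n+1}(Z) \to 0$ with the kernel embedded as $\Omega^{n+1}(X) \hookrightarrow P^X_n \hookrightarrow P^Y_n$. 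Because the level-$n$ (quasi-)Koszul conditions depend only on $\Omega^n(X), \Omega^n(Y), \Omega^n(Z)$ and their first syzygies, the proof reduces (by relabeling these modules as $X, Y, Z$ at each $n$) to a base case $n=0$: under the hypotheses, $J\Omega(Z) = \Omega(Z) \cap J^2 P^Z_0$ is equivalent to $J\Omega(Y) = \Omega(Y) \cap J^2 P^Y_0$ together with $J^2 X = X \cap J^2 Y$ in the quasi-Koszul case, and analogously with $J^{k+1}$ in place of $J^2$ in the Koszul case.

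For the direction $(\Leftarrow)$, the extra assumption lets me lift: for $z \in \Omega(Z) \cap J^2 P^Z_0$, the image $\alpha_0(z) = d^Y_0(0, z)$ lies in $X$ (because $z \in \Omega(Z)$) and in $J^2 Y$ (because $(0, z) \in J^2 P^Y_0$ and $d^Y_0$ is $A$-linear), so by $J^2 X = X \cap J^2 Y$ it lies in $J^2 X = d^X_0(J^2 P^X_0)$; hence I can choose $x \in J^2 P^X_0$ with $(x, z) \in \Omega(Y) \cap J^2 P^Y_0$. This makes the projection $\Omega(Y) \cap J^2 P^Y_0 \twoheadrightarrow \Omega(Z) \cap J^2 P^Z_0$ surjective with kernel $\Omega(X) \cap J^2 P^X_0 = J\Omega(X)$ (the latter from $X$ quasi-Koszul). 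Comparing with the exact sequence $0 \to J\Omega(X) \to J\Omega(Y) \to J\Omega(Z) \to 0$ provided by Lemma \ref{exact sequence of syzygy}, the five lemma yields $(\Omega(Y) \cap J^2 P^Y_0)/J\Omega(Y) \cong (\Omega(Z) \cap J^2 P^Z_0)/J\Omega(Z)$, so $Y$ quasi-Koszul at this level forces $Z$ quasi-Koszul at this level.

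The main obstacle is the direction $(\Rightarrow)$: deriving the extra condition from $Z$ quasi-Koszul. Given $\tilde y \in X \cap J^2 Y$, the plan is to lift to $(x, z) \in J^2 P^Y_0$ with $d^Y_0(x, z) = \tilde y$; since $\tilde y \in X$, projection to $Z$ gives $d^Z_0(z) = 0$, so $z \in \Omega(Z) \cap J^2 P^Z_0 = J\Omega(Z)$ by the assumed quasi-Koszul property of $Z$. Writing $z = \sum_i r_i z_i'$ with $r_i \in J$ and $z_i' \in \Omega(Z)$, and lifting each $z_i'$ to $(x_i', z_i') \in \Omega(Y) \subseteq J P^Y_0$ (minimality forces $x_i' \in J P^X_0$), I get $d^X_0(x_i') \in JX$ from the relation $\alpha_0(z_i') = -d^X_0(x_i')$; whence $\alpha_0(z) = -\sum_i r_i d^X_0(x_i') \in J \cdot JX = J^2 X$, and combined with $d^X_0(x) \in J^2 X$ I conclude $\tilde y = d^X_0(x) + \alpha_0(z) \in J^2 X$. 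The same computation at the $n$-th syzygy level produces $J^2 \Omega^n(X) = \Omega^n(X) \cap J^2 \Omega^n(Y)$ for every $n \geqslant 0$, after which $Y$ quasi-Koszul follows by running the $(\Leftarrow)$ diagram chase in reverse (using the isomorphisms on the outer columns of the analogous five-lemma diagram, now supplied by $X$ and $Z$ being quasi-Koszul).

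For the Koszul case, the Koszul form of Lemma \ref{exact sequence of syzygy} automatically supplies $J^k \Omega^n(X) = \Omega^n(X) \cap J^k \Omega^n(Y)$ for all $n \geqslant 1$ and $k \geqslant 1$, so only the base conditions $J^k X = X \cap J^k Y$ remain as extras. These are derived in the $(\Rightarrow)$ direction by induction on $k$, each inductive step replacing the decomposition $z \in J\Omega(Z)$ above with $z \in J^{k-1}\Omega(Z)$ (supplied by $Z$ Koszul at level $(0, k-1)$) and concluding $r_i d^X_0(x_i') \in J^{k-1} \cdot JX = J^k X$; the equivalence $Y$ Koszul $\Leftrightarrow$ $Z$ Koszul at each bidegree $(n,k)$ then follows from the same five-lemma comparison.
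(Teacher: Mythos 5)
Your proof is correct and follows essentially the same route as the paper: reduce to the syzygy level via Lemma \ref{exact sequence of syzygy} and induction, then at each level $(n,k)$ compare the three ``defect'' conditions through a short exact sequence. The only difference is presentational --- you carry out explicit element-wise liftings in the horseshoe resolution, whereas the paper obtains the same equivalence by tensoring diagram \eqref{comm-diagram-2} with $A/J^k\otimes_A -$ and applying the snake lemma.
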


\begin{proof} As in the proof of Lemma \ref{exact sequence of syzygy}, for the Koszul case,  we get a new commutative diagram by tensoring diagram  \eqref{comm-diagram-2} with the functor $A/J^k\otimes_A -$.
Then, by Snake lemma,  $A/J^k\otimes_A \Omega(Z)\to A/J^k\otimes_A JP''$ is injective if and only if $A/J^k\otimes_A \Omega(Y)\to A/J^k\otimes_A JP$ is injective and $J^kX =  X \cap J^kY$ for all $k$. Therefore, $J^k\Omega(Z)=\Omega(Z) \cap J^{k+1}P''$ if and only if $J^k\Omega(Y)=\Omega(Y) \cap  J^{k+1}P$ and $J^kX =  X \cap J^kY$ for all $k$.

Hence, the proof is completed inductively by Lemma \ref{exact sequence of syzygy} and by replacing $X,Y$ and $Z$ with $\Omega(X),\Omega(Y)$ and $\Omega(Z)$ respectively.

The proof of the quasi-Koszul case is similar.
\end{proof}


\begin{proposition}\label{JM is qK}
Let $A$ be a quasi-Koszul (resp. Koszul) ring.  If $M$ is a left finite bounded below Koszul module, then $JM$  is quasi-Koszul (resp. Koszul).
\end{proposition}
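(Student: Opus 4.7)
The proof plan is to apply Corollary~\ref{strongly quasi Koszul of exact sequence} to the short exact sequence
\[
0 \to \Omega(M) \to JP_0 \to JM \to 0,
\]
where $P_0 \to M$ is the graded projective cover of $M$ and $\Omega(M) \subseteq JP_0$ by minimality. The roles will be $X = \Omega(M)$, $Y = JP_0$, $Z = JM$.

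The first step will be to identify $JP_0$ with $\Omega(M/JM)$. Since $P_0$ is the projective cover of $M$, the induced map $P_0/JP_0 \to M/JM$ is an isomorphism, so the composite $P_0 \twoheadrightarrow M \twoheadrightarrow M/JM$ is a projective cover of $M/JM$ with kernel exactly $JP_0$.

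The second step will be to check that $X$ and $Y$ are (quasi-)Koszul of the appropriate type. Directly from Definition~\ref{our-definition}, the minimal resolution of a syzygy is obtained by truncating the original one, so any syzygy of a (quasi-)Koszul module is (quasi-)Koszul; in particular $\Omega(M)$ is Koszul because $M$ is. For $Y$, since $A_0$ is semiperfect, $P_0$ is a direct sum of shifts of the indecomposable projectives $Ae_i$, so $M/JM \cong P_0/JP_0$ is a direct sum of shifts of simple $A$-modules; each simple is a summand of $S = A/J$, hence each $Ae_i/Je_i$ is (quasi-)Koszul, and therefore $Je_i = \Omega(Ae_i/Je_i)$ is (quasi-)Koszul. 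Because both minimal resolutions and the Koszul condition decompose over direct summands, $JP_0 = \bigoplus Je_{i_\alpha}(l_\alpha)$ is (quasi-)Koszul.

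The final step will be to verify the hypotheses of Corollary~\ref{strongly quasi Koszul of exact sequence}. The Koszulity of $M$ supplies
\[
J^k\Omega(M) = \Omega(M) \cap J^{k+1}P_0 = \Omega(M) \cap J^k(JP_0) \qquad \text{for every } k\geqslant 1,
\]
which gives both the standing hypothesis $JX = X \cap JY$ and the equalities $J^kX = X \cap J^kY$ needed in the Koszul case; combined with $Y$ being Koszul this delivers $JM$ Koszul. In the quasi-Koszul case, the condition $J^2\Omega^n(X) = \Omega^n(X) \cap J^2\Omega^n(Y)$ reduces for $n = 0$ to the $k = 2$ instance of the displayed identity, while for $n \geqslant 1$ it will follow from Lemma~\ref{exact sequence of syzygy} applied with the Koszul module $X = \Omega(M)$. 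The only subtle point is this last reduction in the quasi-Koszul case: although $A$ is assumed only quasi-Koszul, it is essential that $M$ is Koszul so that $\Omega(M)$ is Koszul and Lemma~\ref{exact sequence of syzygy} yields the full $J^k$-version of the syzygy identity for all $k\geqslant 1$, rather than merely its $k=1$ weakening.
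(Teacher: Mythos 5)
Your proposal is correct and follows essentially the same route as the paper: both proofs apply Corollary~\ref{strongly quasi Koszul of exact sequence} to $0\to\Omega(M)\to JP_0\to JM\to 0$, using that $JP_0=\Omega(M/JM)$ is (quasi-)Koszul because $A/J$ is, that Koszulity of $M$ gives $J^k\Omega(M)=\Omega(M)\cap J^k(JP_0)$ for all $k$, and Lemma~\ref{exact sequence of syzygy} for the higher-syzygy condition in the quasi-Koszul case. Your closing remark about why $M$ must be Koszul (not merely quasi-Koszul) even in the quasi-Koszul version of the statement is exactly the point the paper's proof relies on.
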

\begin{proof}
Since ${}_A(A/J)$ is quasi-Koszul (resp. Koszul),
$M/JM$ is quasi-Koszul (resp. Koszul).  Let $P$ be a graded projective cover of $M$. Since $M$ is Koszul and $M/JM$ is quasi-Koszul (resp. Koszul), $\Omega(M)$ is Koszul and $JP=\Omega(M/JM)$ is quasi-Koszul  (resp. Koszul).  Note that $0\to \Omega(M)\to JP\to JM\to 0$ is exact and $J^k\Omega(M)=\Omega(M) \cap J^{k+1}P$ for all $k \geqslant 0$. By Lemma \ref{exact sequence of syzygy}, $J^2\Omega^{n+1}(M)=\Omega^{n+1}(M) \cap J^2 \Omega^n(JP)$.
It follows from  Corollary \ref{strongly quasi Koszul of exact sequence} that $JM$ is quasi-Koszul (resp. Koszul).
\end{proof}

\begin{proposition}\label{exact sequence of ext group}
If $M$ is a left finite bounded below quasi-Koszul $A$-module, then for any $n>0$
\begin{itemize}
\item[(1)] $0\to \Omega^n(M)\to \Omega^n(M/JM)\to \Omega^{n-1}(JM)\to 0$ is exact and $J \,\Omega^n(M)=\Omega^n(M) \cap J\, \Omega^n(M/JM)$.
\item[(2)] $0\to \gExt^{n-1}_A(JM,S)\to \gExt_A^n(M/JM,S)\to \gExt^n_A(M,S)\to 0$ is exact.
\end{itemize}

If moreover $A$ is a Koszul ring,  $M$ is a left finite  and bounded below Koszul
$A$-module, then
\begin{itemize}
\item[$(1)'$] $0\to \Omega^n(J^{k-1}M)\to \Omega^n(J^{k-1}M/J^kM)\to \Omega^{n-1}(J^kM)\to 0$
is exact and $J^l\Omega^n(J^{k-1}M)=J^l\Omega^n(J^{k-1}M/J^kM)\cap \Omega^n(J^{n-1}M)$ for any $k, l >0$.
\item[$(2)'$]
\begin{small}
$0 \to \gExt^{n-1}_A(J^kM,S) \to \gExt_A^n(\dfrac{J^{k-1}M}{J^kM},S) \to \gExt^n_A(J^{k-1}M,S) \to 0$
\end{small}
is exact for any $k>0$.
\end{itemize}
\end{proposition}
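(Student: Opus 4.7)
The plan is to derive both short exact sequences of syzygies from the single short exact sequence $0 \to JM \to M \to M/JM \to 0$ (and its analogue $0 \to J^kM \to J^{k-1}M \to J^{k-1}M/J^kM \to 0$) by exhibiting a common projective cover, applying Lemma \ref{exact sequence of syzygy} after a one-step shift, and then transferring the information to Ext groups via $\gHom_A(-,S)$.

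First I would observe that since $J$ acts trivially on $M/JM$, the top of $M$ coincides with the top of $M/JM$, so both share a common minimal graded projective cover $P$, and consequently $\Omega(M/JM) = JP$. Comparing the two projective-cover sequences by the snake lemma yields the $n=1$ case $0 \to \Omega(M) \to \Omega(M/JM) \to JM \to 0$ of (1). The quasi-Koszul hypothesis on $M$ translates exactly into the equality $J\Omega(M) = \Omega(M) \cap J\Omega(M/JM)$, which is precisely the input required by Lemma \ref{exact sequence of syzygy}. Since $\Omega(M)$ inherits (quasi-)Koszulity from $M$ by truncating its minimal resolution one step, Lemma \ref{exact sequence of syzygy} applied to the triple $(\Omega(M), JP, JM)$ produces
\[ 0 \to \Omega^{n+1}(M) \to \Omega^{n+1}(M/JM) \to \Omega^n(JM) \to 0 \]
for every $n \geq 0$, together with the filtration equality $J\Omega^{n+1}(M) = \Omega^{n+1}(M) \cap J\Omega^{n+1}(M/JM)$; reindexing gives (1).

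For part (2), I would apply $\gHom_A(-,S)$ to this short exact sequence. Because $J$ acts trivially on $S$, one has $\gHom_A(N,S) = \gHom_{A/J}(N/JN,S)$, and the minimality of the projective resolutions gives the identification $\gHom_A(\Omega^n(N),S) \cong \gExt_A^n(N,S)$ for $N \in \{M, M/JM, JM\}$. The filtration equality from (1) shows that the induced map $\Omega^n(M)/J\Omega^n(M) \hookrightarrow \Omega^n(M/JM)/J\Omega^n(M/JM)$ is injective between semisimple $S$-modules; this inclusion therefore splits, so the restriction map $\gHom_A(\Omega^n(M/JM),S) \to \gHom_A(\Omega^n(M),S)$ is surjective. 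Consequently the long exact sequence of $\gHom_A(-,S)$ attached to $0 \to \Omega^n(M) \to \Omega^n(M/JM) \to \Omega^{n-1}(JM) \to 0$ truncates to the short exact sequence asserted in (2).

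For parts $(1)'$ and $(2)'$, the same strategy applies with $M$ replaced by $J^{k-1}M$, using the triple $(\Omega(J^{k-1}M), JQ, J^kM)$ for a common projective cover $Q$ of $J^{k-1}M$ and $J^{k-1}M/J^kM$; the input that $J^{k-1}M$ remains left finite, bounded below and Koszul follows by induction on $k$ from Proposition \ref{JM is qK} together with Lemma \ref{JM is fg}. The Koszul version of Lemma \ref{exact sequence of syzygy} then upgrades the filtration equality to $J^l\Omega^n(J^{k-1}M) = \Omega^n(J^{k-1}M) \cap J^l\Omega^n(J^{k-1}M/J^kM)$ for every $l > 0$, which is $(1)'$, and applying $\gHom_A(-,S)$ exactly as in (2) yields $(2)'$. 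The main obstacle I anticipate is bookkeeping: one must keep careful track of the one-step shift introduced when feeding $(\Omega(\cdot), JP, J\cdot)$ into Lemma \ref{exact sequence of syzygy}, and check at each inductive stage that the hypothesis $JX = X \cap JY$ continues to hold, which is exactly where the full Koszul (as opposed to merely quasi-Koszul) assumption on $M$ gets used.
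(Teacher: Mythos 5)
Your proposal matches the paper's proof essentially step for step: the common projective cover giving $\Omega(M/JM)=JP$ and the sequence $0\to\Omega(M)\to JP\to JM\to 0$, the translation of quasi-Koszulity into $J\Omega(M)=\Omega(M)\cap J\Omega(M/JM)$, the application of Lemma \ref{exact sequence of syzygy} with a one-step reindexing, the passage to Ext groups via the split injection of semisimple tops, and the reduction of $(1)'$ and $(2)'$ to the unprimed case via Proposition \ref{JM is qK}. The argument is correct and complete.
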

\begin{proof}
(1) 
As noted before,  $0\to \Omega(M)\to \Omega(M/JM)\to JM\to 0$ is exact.
Since $M$ is quasi-Koszul and  $JP=\Omega(M/JM)$,  $$J\,\Omega(M)=\Omega(M) \cap  J^2P =\Omega(M) \cap J\,\Omega(M/JM).$$
Therefore, the conclusion follows from Lemma \ref{exact sequence of syzygy}.

(2) It follows from (1)  that 
$$0\to \dfrac{\Omega^n(M)}{J\Omega^n(M)}\to \dfrac{\Omega^n(M/JM)}{J\Omega^n(M/JM)}\to \dfrac{\Omega^{n-1}(JM)}{J\Omega^{n-1}(JM)}\to 0$$
is exact.
By acting on
 $0\to \Omega^n(M)\to \Omega^n(M/JM)\to \Omega^{n-1}(JM)\to 0$
with the functor
$$\gHom_A(-,S)\cong \gHom_A(-, \gHom_S(A/J, S))\cong \gHom_S(A/J \otimes_A -,S),$$
it implies that
\begin{small}
$$ 0\!\to\! \gHom_A(\Omega^{n-1}(JM),S)\!\to\! \gHom_A(\Omega^n(M/JM),S)\!\to\! \gHom_A(\Omega^n(M),S)\!\to\! 0$$
\end{small}
is exact, as $\gHom_S(-,S)$ is an exact functor.

Note that $\gExt_A^n(X, S)\cong \gHom_A(\Omega^n(X),S)$ for any graded $A$-module $X$ with a minimal graded projective resolution. Therefore,
$$0\to \gExt^{n-1}_A(JM,S)\to \gExt_A^n(M/JM,S)\to \gExt^n_A(M,S)\to 0$$
is exact for any $n>0$.

Note this means that the long exact Ext-group sequence induced by applying the functor $\gHom_A(-, S)$ to the short exact sequence $ 0 \to JM \to M \to M/JM \to 0$ is divided into short exact sequences.

If $A$ and $M$ are Koszul, then $J^kM$ is Koszul by Proposition \ref{JM is qK}. The proof of $(1)'$ and $(2)'$ are completed by replacing $M$ with $J^{k-1}M$.
\end{proof}

\subsection{Characterizations of Koszulity via Koszulity of Yoneda Ext rings}
 Let $\mathcal{E}=\gExt_A^\bullet(-,S)$ be the functor from the category of graded $A$-modules to the category of graded $E(A)$-modules. Note that $E(A)_0=\gExt_A^\bullet(S,S)_0=\gHom_A(S,S)\cong S^{op}$ is artinian semisimple.
The following is a generalized version of \cite[Theorems 6.1 and 9.1]{GM1}.

\begin{theorem}\label{A sqk implies E(A) Koszul}
Let $A$ be a left finite $\mathbb{N}$-graded ring generated in degree $1$  with $A_0$ noetherian semiperfect.
\begin{itemize}
\item[(1)] $A$ is a Koszul ring if and only if $E(A)$ is a 
 classical Koszul ring (in the sense of \cite[Defintion 1.2.1]{BGS1}).
\item[(2)] Suppose $A$ is Koszul. Then, for any left finite bounded below graded $A$-module $M$, $M$ is a Koszul $A$-module if and only if $\mathcal{E}(M)$ is a classical 
Koszul $E(A)$-module.
\end{itemize}
\end{theorem}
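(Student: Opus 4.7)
The plan is to prove (2), and to derive (1) by specializing to the trivially Koszul $A$-module $M=A$ (for which $\Ker d_n$ vanishes at all levels of the resolution). The central observation is that for any Koszul $A$-module $M$ over a Koszul ring $A$, the $J$-adic filtration $M\supseteq JM\supseteq J^2M\supseteq \cdots$ assembles into a linear projective resolution of $\mathcal{E}(M)$ over $E(A)$, and, conversely, any classical Koszul resolution of $\mathcal{E}(M)$ must arise from such a filtration.

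For the forward direction, assume $A$ and $M$ are Koszul. Then $\mathcal{E}(M)$ is generated in homological degree $0$ over $E(A)$ by Theorem \ref{qK iff E(M) generated in degree 0}. Each quotient $J^kM/J^{k+1}M$ is annihilated by $J$ and so, as a graded $A$-module, is a direct sum of shifts of graded simple summands $S_j$ of $S$. Since $\gExt_A^\bullet(S_j,S)$ is a direct summand of the regular left $E(A)$-module $E(A)=\gExt_A^\bullet(S,S)$, each $\mathcal{E}(J^kM/J^{k+1}M)$ is a graded projective $E(A)$-module, which is generated in homological degree $0$ by Theorem \ref{qK iff E(M) generated in degree 0} applied to the quasi-Koszul summands $S_j$. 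Proposition \ref{exact sequence of ext group}$(2)'$ (available since $A$ and $M$ are Koszul) supplies, for each $k\geqslant 1$, a short exact sequence
$$0\to \mathcal{E}(J^kM)(-1)\to \mathcal{E}(J^{k-1}M/J^kM)\to \mathcal{E}(J^{k-1}M)\to 0$$
of graded $E(A)$-modules, where $(-1)$ denotes an upward homological shift by one (accounting for the connecting map of the long exact Ext sequence). Splicing these sequences yields an exact complex
$$\cdots\to \mathcal{E}(J^2M/J^3M)(-2)\to \mathcal{E}(JM/J^2M)(-1)\to \mathcal{E}(M/JM)\to \mathcal{E}(M)\to 0,$$
whose $k$-th term is a graded projective $E(A)$-module generated in homological degree $k$. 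This is a linear projective resolution, proving that $\mathcal{E}(M)$ is classically Koszul over $E(A)$. Specializing to $M=A$, for which $\mathcal{E}(A)=\gHom_A(A,S)=S\cong E(A)_0$ (concentrated in homological degree $0$ with all positive-degree elements of $E(A)$ acting as zero), yields the forward direction of (1).

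For the converse direction, suppose $\mathcal{E}(M)$ is classically Koszul over $E(A)$ (for (1), apply this with $M=A$, so the hypothesis reads as $E(A)$ being classically Koszul). Classical Koszulity of $\mathcal{E}(M)$ forces it to be generated in homological degree $0$, hence $M$ is quasi-Koszul by Theorem \ref{qK iff E(M) generated in degree 0}. The full Koszul condition $J^k\Ker d_n=\Ker d_n\cap J^{k+1}P_n$ is then established by induction on $k$: the case $k=1$ is quasi-Koszulity, and the inductive step compares the short exact sequences from Proposition \ref{exact sequence of ext group}$(2)$ against the given linear resolution of $\mathcal{E}(M)$; by minimality (and the resulting uniqueness up to isomorphism) of linear resolutions, the $k$-th syzygy of $\mathcal{E}(M)$ over $E(A)$ is identified with $\mathcal{E}(J^kM)(-k)$, and translating this back through the explicit description of $\mathcal{E}$ on $J$-torsion $A$-modules forces the filtration equation at level $k+1$.

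The main obstacle is the converse induction: one must invert the functor $\mathcal{E}$ well enough to recover module-level filtration data on the syzygies of $M$ from the Ext-level linearity of the resolution of $\mathcal{E}(M)$. Careful bookkeeping of the bigrading (the homological degree on $E(A)$ against the internal $J$-adic degree on syzygies of $M$) is required, and the fact that $\mathcal{E}(J^kM/J^{k+1}M)$ is graded projective and sits in precisely the right homological degree is what makes the inverse reconstruction possible.
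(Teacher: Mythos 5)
Your forward direction is essentially the paper's argument: splice the short exact sequences from Proposition \ref{exact sequence of ext group}$(2)'$ into a resolution of $\mathcal{E}(M)$ with terms $\mathcal{E}(J^kM/J^{k+1}M)(-k)$, which are projective and generated in degree $k$. (You should still justify that $\mathcal{E}$ of an infinite direct sum of shifted simples is projective --- the paper needs left-finiteness and boundedness below to commute $\gExt$ with the direct sum --- but that is a repairable detail.) The real problems are in the converse halves.

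First, the reduction of (1) to (2) via $M=A$ fails for the ``if'' direction of (1). Statement (2) carries the standing hypothesis that $A$ is already a Koszul ring, which is exactly what the ``if'' direction of (1) must establish, so invoking (2) there is circular. Even setting that aside, the conclusion you would extract from (2) with $M=A$ is that $A$ is a Koszul $A$-\emph{module}, which is vacuously true ($A$ is projective, all syzygies vanish) and says nothing about the definition of $A$ being a Koszul \emph{ring}, namely that $S=A/J$ is a Koszul module. The implication ``$E(A)$ classically Koszul $\Rightarrow$ $S$ is a Koszul $A$-module'' is the hardest part of the theorem; the paper proves it by a separate induction showing that every graded simple module $N$ and every $J^2Q$ ($Q$ graded projective) is $l$-quasi-Koszul for all $l$, using Lemma \ref{fact 3} on the exact sequence $0\to \Omega(JP_0)\to\Omega(JP_0/J^2P_0)\to J^2P_0\to 0$ and an explicit diagram chase to propagate the filtration identities $J^k\Omega^n(N)=\Omega^n(N)\cap J^{k+1}P_{n-1}$ from $n$ to $n+1$. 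None of this is present in, or recoverable from, your plan.

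Second, even for the converse of (2), the step you flag as ``the main obstacle'' is precisely where the proof lives, and the mechanism you propose does not suffice. Identifying $\Omega^k(\mathcal{E}(M))$ with $\mathcal{E}(J^kM)(-k)$ and noting it is generated in degree $k$ only tells you, via Theorem \ref{qK iff E(M) generated in degree 0}, that each $J^kM$ is \emph{quasi}-Koszul; this is information about $\Omega^n(\cdot)/J\Omega^n(\cdot)$ and does not by itself yield the higher filtration equalities $J^k\Omega^n(M)=\Omega^n(M)\cap J^{k+1}P_{n-1}$. The paper bridges this gap by an induction on the level $l$ of quasi-Koszulity, comparing the $J$-adic filtrations on the syzygies of $M$, $M/JM$ and $JQ_0$ through Lemma \ref{fact 3} and the Koszulity of $JQ_0$ (which in turn depends on part (1) being proved first). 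You would need to supply an argument of this kind; ``uniqueness of minimal linear resolutions'' does not force module-level filtration identities on the $A$-side.
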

\begin{proof}
{\bf ``only if" part of (1)}. We show that $E(A)$ is a classical Koszul ring by constructing a linear projective resolution of $E(A)_0$ as an $E(A)$-module.


Let $\cdots\to P_n\xrightarrow{d_n} P_{n-1}\to \cdots \to P_0 \xrightarrow{d_0} S\to 0$ be a minimal graded projective resolution of $S$ as an $A$-module. Let  $P'_0 \xrightarrow{d'_0} E(A)_0\to 0$ be the projective cover of $E(A)_0$  as a graded $E(A)$-module, where  $P'_0 = E(A)$ and $\ker d_0'=J'=E(A)_{\geqslant 1}$ is the graded Jacobson radical of $E(A)$.
 Then
 $$ \ker d_0' = \mathop{\oplus}\limits_{i\geqslant 1}\gExt^i_A(S,S)=\mathop{\oplus}\limits_{i\geqslant 1}\gExt_A^{i-1}(JP_0,S)=\gExt_A^\bullet(JP_0,S)\,(-1)
 $$ where $(-)$ denotes the degree shifting of the homological grading.

 Since $JP_0/J^2P_0$ is a finite direct sum of graded simple modules,
 $\gExt_A^\bullet(JP_0/J^2P_0,S)$ is graded $E(A)$-projective.
 By Proposition \ref{exact sequence of ext group},
 $$0 \to \gExt_A^\bullet(J^2P_0,S)(-2) \to \gExt_A^\bullet(JP_0/J^2P_0,S)\,(-1) \to
 \ker d_0' \to 0$$
 is an exact sequence of graded $E(A)$-modules.

 Let $P'_1:= \gExt_A^\bullet(JP_0/J^2P_0,S)\,(-1)$.  It follows from Theorem \ref{qK iff E(M) generated in degree 0} that $P'_1$ is generated in degree $1$ as an $E(A)$-module, and
 $\gExt_A^\bullet(J^2P_0,S)(-2)  \subseteq  J'P_1'$.
 Hence  $P'_1 \to J'P'_0 \to 0$ is a graded projective cover of $J'P'_0$.

By using Proposition \ref{exact sequence of ext group} $(2)'$ repeatedly, one deduces that for any $k \geqslant 1$,
 \begin{equation}\label{reso-of-E(A)_0}
P'_k=
\gExt_A^\bullet(J^kP_0/J^{k+1}P_0,S)\,(-k)
 \end{equation}
 is generated in degree $k$. Thus $E(A)_0$ has a linear projective resolution
  \begin{equation}\label{reso-of-E(A)_0-long}
  \cdots\to P'_k \to P'_{k-1}\to \cdots \to P'_0 \to E(A)_0\to 0
 \end{equation}
 as a graded $E(A)$-module, and so $E(A)$ is a classical Koszul ring.

{\bf ``only if" part of (2)}. We first claim that $\gExt_A^\bullet(M/JM,S)$ is graded $E(A)$-projective if $M$ is bounded below and left finite.
As $M/JM=\oplus_{j\geqslant l} \bar{M_j}$ where $l=l(M)$ and $\bar{M_j}$ is a finite direct sum of graded simple modules concentrated in degree $j$, it suffices to prove
$\gExt_A^\bullet(\oplus_{j\geqslant l} S(-j), S)$ is graded $E(A)$-projective. Let $P_\bullet \to S \to 0$ be a minimal graded
projective resolution of ${}_AS$. Then $\oplus_{j\geqslant l}  P_\bullet(-j) \to \oplus_{j\geqslant l}  S(-j) \to 0$ is  a minimal graded projective resolution.

Note  that  $\Hom_{Gr}(\oplus_{j\geqslant l}  P_i(-j), S)=\oplus_{j\geqslant l}  \Hom_{Gr}( P_i(-j), S)$ for each $i$ as $P_i$ is bounded below.
Then
\begin{align*}\gExt_A^\bullet(\oplus_{j\geqslant l} S(-j), S) = &\oplus_i \gHom_A(\oplus_{j\geqslant l} P_i(-j), S)\\
=&\oplus_i (\oplus_n \Hom_{Gr}(\oplus_{j\geqslant l} P_i(-j)(-n), S))\\
=&\oplus_i (\oplus_n \Hom_{Gr}(\oplus_{j\geqslant l} P_i(-j), S)(n))\\
=&\oplus_i (\oplus_n (\oplus_{j\geqslant l} \Hom_{Gr}( P_i(-j), S))(n))\\
=&\oplus_i (\oplus_{j\geqslant l} (\oplus_n \Hom_{Gr}( P_i(-j), S))(n))\\
=&\oplus_i (\oplus_{j\geqslant l} \gHom_A( P_i(-j), S))\\
=&\oplus_{j\geqslant l} (\oplus_i \gHom_A( P_{i}(-j), S))\\
=&\oplus_{j\geqslant l} (\oplus_i \gExt^i_A( S(-j), S))\\
=&\oplus_{j\geqslant l} \gExt^\bullet_A( S(-j), S).
\end{align*}
This finishes the proof of the claim.

So, $\gExt_A^\bullet(J^{k-1}M/J^kM,S)$ is graded $E(A)$-projective for all $k \geqslant 1$.
 Now for any left finite Koszul module $M$, it follows from Proposition \ref{JM is qK} that $J^{k-1}M$ is left finite Koszul.
By Proposition \ref{exact sequence of ext group} $(2)'$,
\small{$$0\to \gExt^\bullet_A(J^kM,S)(-1)\to \gExt_A^\bullet(J^{k-1}M/J^kM,S)\to \gExt^\bullet_A(J^{k-1}M,S)\to 0$$}
is exact. So we can construct a linear $E(A)$-projective resolution for $\mathcal{E}(M)$. Therefore, $\mathcal{E}(M)$ is a classical Koszul $E(A)$-module.

We postpone the ``if part" proof of (1) and (2) of the theorem, for which a  modified version of Lemma \ref{exact sequence of syzygy} is needed.
\end{proof}

 A graded $A$-module $M$ with a  minimal graded projective resolution is called {\it $l$-quasi-Koszul}, where $l>0$ is an integer,  if its minimal projective resolution $P_\bullet$ satisfies that $J^k\Omega^n(M)=\Omega^n(M)\cap J^{k+1}P_{n-1}$ for all $0\leqslant k\leqslant l$ and $n>0$. Quasi-Koszul modules are exactly $1$-quasi-Koszul modules.

\begin{lemma}\label{fact 3}
Let $0\to X\to Y\to Z\to 0$ be an exact sequence of left finite bounded below graded $A$-modules with $JX=X\cap JY$. If $X$, $Y$ and $Z$ are all $l$-quasi-Koszul modules, then, for any $n \geqslant 0$,
$0\to \Omega^n(X)\to \Omega^n(Y)\to \Omega^n(Z)\to 0$ is exact, and
for any $1\leqslant  k\leqslant l+1$, $$J^k\Omega^n(X)=\Omega^n(X)\cap J^k\Omega^n(Y).$$
\end{lemma}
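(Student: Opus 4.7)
The plan is to follow the template of Lemma~\ref{exact sequence of syzygy} but replace the tensor-product diagram chase by a direct lifting argument that exploits the $l$-quasi-Koszul property of $Z$ in order to reach the endpoint $k = l+1$.

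First, I would dispose of the easy content. Since each $l$-quasi-Koszul module is in particular quasi-Koszul (as $l \geqslant 1$), Lemma~\ref{exact sequence of syzygy} immediately yields both the exactness of $0 \to \Omega^n(X) \to \Omega^n(Y) \to \Omega^n(Z) \to 0$ for every $n \geqslant 0$ and the $k = 1$ identity $J\Omega^n(X) = \Omega^n(X) \cap J\Omega^n(Y)$. What remains is the strengthened identity for $2 \leqslant k \leqslant l+1$. For each fixed $n \geqslant 0$, invoking Lemma~\ref{fact 1} iteratively (its hypothesis being supplied at each syzygy level by the $k=1$ case just established) produces a projective cover $\pi_n : P_n = P'_n \oplus P''_n \to \Omega^n(Y)$ compatible with projective covers $P'_n \to \Omega^n(X)$ and $P''_n \to \Omega^n(Z)$.

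The main step is the following lifting argument at fixed $n$. Given $y \in \Omega^n(X) \cap J^k\Omega^n(Y)$ with $2 \leqslant k \leqslant l+1$, surjectivity of $\pi_n$ permits us to write $y = \pi_n(p)$ with $p = p' + p'' \in J^k P_n = J^k P'_n \oplus J^k P''_n$. Because $y$ maps to $0$ in $\Omega^n(Z)$ and the induced map $P_n \to \Omega^n(Z)$ factors through the projection $P_n \to P''_n$ followed by the projective cover map, the component $p''$ lies in $\Omega^{n+1}(Z) \cap J^k P''_n$. Applying the $l$-quasi-Koszul property of $Z$ at syzygy level $n+1$ with exponent $k-1$ (valid precisely because $0 \leqslant k-1 \leqslant l$) identifies this intersection as $J^{k-1}\Omega^{n+1}(Z)$. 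Writing $p'' = \sum_i a_i q_i$ with $a_i \in J^{k-1}$ and $q_i \in \Omega^{n+1}(Z) \subseteq J P''_n$, each image $\pi_n(q_i)$ lies in $\Omega^n(X) \cap J\Omega^n(Y) = J\Omega^n(X)$ by the $k=1$ case. Hence $\pi_n(p'') \in J^{k-1} \cdot J \Omega^n(X) = J^k \Omega^n(X)$, and together with the evident $\pi_n(p') \in J^k \Omega^n(X)$ this yields $y = \pi_n(p) \in J^k \Omega^n(X)$.

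The main obstacle is identifying where to invoke the $l$-quasi-Koszul property. A direct mimicry of Lemma~\ref{exact sequence of syzygy}, obtained by tensoring diagram~\eqref{comm-diagram-2} with $A/J^k$, only delivers the result for $1 \leqslant k \leqslant l$, since the vertical maps become injective exactly in that range; the endpoint $k = l+1$ is missed. The lifting argument circumvents this gap by transferring the constraint onto the \emph{next} syzygy $\Omega^{n+1}(Z)$, for which the $l$-quasi-Koszul condition of $Z$ supplies exactly the shift $k \mapsto k-1$ that allows one to land inside $J^{k-1}\Omega^{n+1}(Z)$ and close the argument via the already-known $k=1$ identity.
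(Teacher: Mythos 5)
Your proof is correct, but it follows a genuinely different route from the paper's, so a comparison is in order. The paper proves Lemma \ref{fact 3} by tensoring the three-by-three diagram \eqref{comm-diagram-2} with $A/J^k\otimes_A-$ for $1\leqslant k\leqslant l$ and running a nine-lemma-style chase: the $l$-quasi-Koszul hypotheses make the vertical maps $A/J^k\otimes\Omega(\cdot)\to A/J^k\otimes JP^{(\cdot)}$ injective, and the resulting injectivity of the \emph{bottom} row $A/J^k\otimes JX\to A/J^k\otimes JY$ says $J^k(JX)=JX\cap J^k(JY)$, i.e.\ $J^{k+1}X=X\cap J^{k+1}Y$; the extra power of $J$ gained in translating a statement about $JX$ into one about $X$ is exactly what reaches the endpoint $k=l+1$, and one then inducts on syzygies. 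So your closing diagnosis --- that the tensoring approach ``only delivers the result for $1\leqslant k\leqslant l$'' --- applies only to the top row of the tensored diagram; the paper extracts the endpoint from the bottom row instead. Your argument replaces this chase by an explicit element lift through the projective-cover decomposition $P_n=P_n'\oplus P_n''$ supplied by Lemma \ref{fact 1}, and gains the missing power of $J$ at a different spot: the $P_n''$-component of the lift lands in $\Omega^{n+1}(Z)\cap J^kP_n''=J^{k-1}\Omega^{n+1}(Z)$ by the $l$-quasi-Koszulity of $Z$ at exponent $k-1\leqslant l$, after which the $k=1$ identity pushes everything into $J^{k-1}\cdot J\Omega^n(X)=J^k\Omega^n(X)$. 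All the supporting facts you invoke ($J^kP_n=J^kP_n'\oplus J^kP_n''$, $\pi_n(J^kP_n')=J^k\Omega^n(X)$, the factorization of $P_n\to\Omega^n(Z)$ through $P_n''$, and $\Omega^{n+1}(Z)\subseteq JP_n''$) do follow from Lemma \ref{fact 1}, minimality, and the $k=1$ case given by Lemma \ref{exact sequence of syzygy}. A small dividend of your version is that, beyond the quasi-Koszulity of $X$ needed for Lemma \ref{exact sequence of syzygy}, it only ever uses the $l$-quasi-Koszulity of $Z$, which makes visible that the full strength of the hypotheses on $X$ and $Y$ is not needed.
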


\begin{proof} Since $X$ is 1-quasi-Koszul, by Lemma \ref{exact sequence of syzygy}, for any $n > 0$,
$$0\to \Omega^n(X)\to \Omega^n(Y)\to \Omega^n(Z)\to 0$$ is exact, and  $J\Omega^n(X)=\Omega^n(X)\cap J\Omega^n(Y)$.

By applying the functor $A/J^i\otimes_A -$ to diagram  \eqref{comm-diagram-2}, it follows from the $l$-quasi-Koszulity of $X$, $Y$ and $Z$ that  $J^i\Omega(X)=\Omega(X)\cap J^i\Omega(Y)$, and
$J^{i+1}X = JX \cap J^{i+1} Y = X \cap JY \cap J^{i+1} Y =  X \cap J^{i+1} Y$ for $0\leqslant i\leqslant l$.

The proof is finished by replacing  $0\to X\to Y\to Z\to 0$ with the exact sequence
$0\to \Omega^n(X)\to \Omega^n(Y)\to \Omega^n(Z)\to 0.$
\end{proof}

Now we continue the proof of ``if part" of Theorem \ref{A sqk implies E(A) Koszul}.
\begin{proof}{\bf   ``if part" of (1)}.
Suppose $E(A)$ is a 
classical Koszul ring. Note $E(A)_0$ is semisimple. By \cite[Proposition 1.2.3]{BGS1}, $E(A)$ is generated in degree $1$. Thus by Theorem \ref{qK iff E(M) generated in degree 0}, $A$ is a quasi-Koszul ring.

To prove that $A$ is Koszul, it suffices to prove that all the graded simple $A$-modules are Koszul. This is done by proving the following claim  inductively.

Claim: all the graded simple $A$-modules $N$ and all $J^2Q$, where $Q$ are left finite bounded below graded projective $A$-modules, are $l$-quasi-Koszul for any integer $l$.

Since $A$ is quasi-Koszul, $N$ is quasi-Koszul, that is, $1$-quasi-Koszul.

For any left finite, bounded below graded projective $A$-module $Q$, $\mathcal{E}(Q)$ is a classical Koszul $E(A)$-module as $E(A)$ is a classical Koszul ring. It follows from the quasi-Koszulity of $Q$  and Proposition \ref{exact sequence of ext group} that
$$0\to \mathcal{E}(JQ)(-1)\to \mathcal{E}(Q/JQ)\to \mathcal{E}(Q)\to 0$$
is exact.  Since $\mathcal{E}(Q)$ is a classical Koszul module, $\mathcal{E}(JQ)(-1)=\Omega(\mathcal{E}(Q))$ is generated in degree $1$. By 
Theorem \ref{qK iff E(M) generated in degree 0} (1),
$JQ$ is a quasi-Koszul $A$-module. Suppose $J^{k-1}Q$ is quasi-Koszul. Then
$$0\to \mathcal{E}(J^{k}Q)(-1)\to \mathcal{E}(J^{k-1}Q/J^k Q)\to \mathcal{E}(J^{k-1}Q)\to 0$$
is exact, and $\mathcal{E}(J^k Q)(-k)=\Omega(\mathcal{E}(J^{k-1}Q)(-k+1))=\cdots=\Omega^k(\mathcal{E}(Q))$ which is generated in degree $k$. By Theorem \ref{qK iff E(M) generated in degree 0} (1) again, $J^k Q$ is quasi-Koszul for any $k\geqslant 0$ by induction.

So, our claim is true for $l=1$.

Now, assume all graded simple $A$-modules $N$ and all $J^2Q$ are $l$-quasi-Koszul for $l \geqslant 1$, where $Q$ are left finite, bounded below graded projective $A$-module $A$-modules.

Let $P_\bullet$ be a minimal graded projective resolution of $N$. Then $P_0$ is an indecomposable graded projective $A$-module.
Note $P_1 \to JP_0 \to 0$ is a projective cover, $\Omega(JP_0)=\Omega^2(N)$ and  $JP_1 = \Omega(JP_0/J^2P_0)$. Then
\begin{equation} \label{important-exact=seq}
0\to \Omega(JP_0) \to \Omega(JP_0/J^2P_0)\to J^2P_0\to 0
\end{equation}
 is an exact sequence, and all the modules in above exact sequence are $l$-quasi-Koszul by  hypothesis.

 Since $N$ is quasi-Koszul, $$J\Omega(JP_0)= J \Omega^2(N)= \Omega^2(N) \cap J^2P_1 = \Omega(JP_0) \cap J  \Omega(JP_0/J^2P_0).$$
Then Lemma \ref{fact 3} applies to the exact sequence \eqref{important-exact=seq}, and we have, for all $n \geqslant 1$ and $k\leqslant l+1$,
\begin{equation}\label{equation 1}J^k\Omega^n(JP_0)=\Omega^n(JP_0)\cap   J^k\Omega^n(JP_0/J^2P_0).
\end{equation}

Next, we show that $N$ is $l+1$-quasi-Koszul, that is, for any $n$ and $ k\leqslant l+1$,
 \begin{equation}\label{key equation}
 J^k\Omega^n(N)=\Omega^n(N) \cap J^{k+1}P_{n-1}.
\end{equation}

Since $\Omega(N)=JP_0$,
$J^k\Omega(N)=J^{k+1}P_0= \Omega(N) \cap J^{k+1}P_0$ for all $k \geqslant 0$.
So, \eqref{key equation} holds for $n=1$.

Since $\Omega(JP_0/J^2P_0)=JP_1$, by \eqref{equation 1}, for all $0\leqslant k\leqslant l+1$,
\begin{small}
\begin{equation}\label{equation 2}
J^k\Omega^2(N)=J^k\Omega(JP_0)=\Omega(JP_0) \cap J^k\Omega(JP_0/J^2P_0) =\Omega^2(N) \cap J^{k+1}P_1. 
\end{equation}
\end{small}
So, \eqref{key equation} holds for $n=2$.

By Lemma \ref{fact 1}, we have the following exact commutative diagram, where $P_2' \to J^2P_0 \to 0$ is a projective cover of $J^2P_0$ and ${P_2}''=P_2\oplus {P'}_2$.
\begin{center}
\begin{tikzcd}
            & 0 \arrow[d]                                    & 0 \arrow[d]                               & 0 \arrow[d]                        &   \\
0 \arrow[r] & \Omega^2(JP_0) \arrow[r] \arrow[d] & \Omega^2(JP_0/J^2P_0) \arrow[r] \arrow[d] & \Omega(J^2P_0) \arrow[r] \arrow[d] & 0 \\
0 \arrow[r] & P_2 \arrow[r] \arrow[d]                        & {P_2}'' \arrow[r] \arrow[d]                     & {P'}_2 \arrow[r] \arrow[d]             & 0 \\
0 \arrow[r] & \Omega(JP_0) \arrow[r] \arrow[d]   & \Omega(JP_0/J^2P_0) \arrow[r] \arrow[d]   & J^2P_0 \arrow[r] \arrow[d]         & 0 \\
            & 0                                              & 0                                         & 0                                  &
\end{tikzcd}
\end{center}

Note that \eqref{equation 2} holds for all graded simple module $N$. So, it holds for $N=JP_0/J^2P_0$. Then, by \eqref{equation 1} and \eqref{equation 2}, for all $0\leqslant k\leqslant l+1$,
\begin{align*}
J^k\Omega^3(N)&=J^k\Omega^2(JP_0)=\Omega^2(JP_0) \cap J^k\Omega^2(JP_0/J^2P_0) \\
&=\Omega^2(JP_0) \cap \Omega^2(JP_0/J^2P_0)\cap J^{k+1}{P_2}''\\
&=\Omega^2(JP_0) \cap J^{k+1}{P_2}''\\
&=\Omega^2(JP_0)\cap  P_2\cap J^{k+1}{P_2}''\\
&=\Omega^2(JP_0) \cap J^{k+1}P_2\\
&=\Omega^3(N) \cap J^{k+1}P_2.
\end{align*}
Hence, \eqref{key equation} holds for $n=3$.


Inductively,  we have, for any $n$ and $0\leqslant k\leqslant l+1$,
$$J^k\Omega^n(N)=\Omega^n(N) \cap J^{k+1}P_{n-1}.$$
Thus $N$ is an $(l+1)$-quasi-Koszul module.

Hence $\Omega(JP_0)= \Omega^2(N)$ and $\Omega(JP_0/J^2P_0)$ are $(l+1)$-quasi-Koszul. Similarly to the proof of Corollary \ref{strongly quasi Koszul of exact sequence}, we can show that $J^2P_0$ is $(l+1)$-quasi-Koszul.

Since $N$ is an arbitrary graded simple $A$-module and $P_0$ is its graded projective cover, the induction is completed.

Thus, every graded simple $A$-module is $l$-quasi-Koszul  for any $l>0$, so it is Koszul.

{\bf ``if part" of (2)}. By (1), $E(A)$ is a classical Koszul algebra, so it is generated in degree $1$. If $\mathcal{E}(M)$ is a 
classical Koszul module, it follows from Theorem \ref{qK iff E(M) generated in degree 0} (1) that $M$ is a quasi-Koszul module.

 Since $A$ is Koszul, it follows from  Proposition \ref{JM is qK} that $JQ$ is Koszul for all left finite bounded below graded projective $A$-module $Q$.

By Proposition \ref{exact sequence of ext group},
$0\to \Omega^n(M)\to \Omega^n(M/JM)\to \Omega^{n-1}(JM)\to 0$ is exact and $J \,\Omega^n(M)=\Omega^n(M) \cap J\, \Omega^n(M/JM)$ for any $n > 0$.

Now suppose that $M$ is $l$-quasi-Koszul for $l \geqslant 1$.
A similar proof to Proposition \ref{JM is qK} shows that $JM$ is $l$-quasi-Koszul. It follows from Lemma \ref{fact 3} that for any $n$ and $1 \leqslant k \leqslant l+1$,
 \begin{equation}
     J^k\Omega^n(M)=\Omega^n(M)\cap J^k\Omega^n(M/JM).
 \end{equation}
 Let $Q_\bullet \to M \to 0$ and  $Q''_\bullet \to JQ_0 \to 0$ be minimal graded projective resolutions of $M$ and $JQ_0$ respectively. Then, it follows from the exact sequence $0 \to \Omega(M) \to JQ_0 \to JM \to 0$ and the Koszulity of $JQ_0$ that, for any $1 \leqslant k \leqslant l+1$,
 \begin{align*}
 J^k\Omega^n(M)=&\Omega^n(M)\cap J^k\Omega^n(M/JM)\\
 =&\Omega^n(M)\cap J^k\Omega^{n-1}(JQ_0)\\
  =&\Omega^n(M)\cap \Omega^{n-1}(JQ_0) \cap J^{k+1}  Q''_{n-2}\\
 =&\Omega^n(M)\cap Q_{n-1} \cap J^{k+1} Q''_{n-2}\\
 =&\Omega^n(M)\cap J^{k+1} Q_{n-1}.
 \end{align*}
Hence, $M$ is $(l+1)$-quasi-Koszul. This finishes the proof.
\end{proof}

\begin{remark} \label{E(A)-left-finite}
It should be noted that if a quasi-Koszul module $M$ is finitely generated then $\mathcal{E}(M)$ is a left finite  $E(A)$-module.
Let $Q_\bullet \to M \to 0$ be a minimal graded projective resolution of $M$. It follows from Proposition \ref{minimal is fg} that every $Q_n$ is finitely generated.
Therefore
$$\gExt_A^n(M,S)=\gHom_A(Q_n, S) \cong \gHom_S(Q/JQ, S)$$ is a finitely generated $S^{op}$-module. It is easy to see that the $E(A)_0$-module structure of $\gExt_A^n(M,S)$ given by the Yoneda product corresponds to the canonical $S^{op}$-module structure of $\gHom_A(J,S)$. Hence $\gExt_A^n(M,S)$ is finitely generated as an $E(A)_0$-module. In particular, $E(A)$ is a left finite classical Koszul algebra if $A$ is a Koszul algebra.

But, in general, $\mathcal{E}(M)$ will not be left finite as an $E(A)$-module, even when $M$ is Koszul (for example, $M=\oplus_{i\geqslant 0} A(-i)$).
\end{remark}

\subsection{Koszul Duality for rings and modules}
In the classical Koszul theory, if $A$ is left finite classically Koszul then $E(A)$ is left finite classically Koszul and the Yoneda Ext ring of $E(A)$ is isomorphic to $A$ \cite[Theorem 1.2.5]{BGS1}. In our setting, the Koszul dual of the Koszul dual of $A$ will not be $A$ in general but $\Gr_J A$ (see Theorem \ref{FE(S) cong Grj A}). If $A_0$ is semisimple, it recovers the classical results.

As defined in the Introduction, let
\begin{align*}
&\mathcal{E}=\gExt_A^\bullet(-,A/J),\\
&\mathcal{F}=\gExt_{E(A)}^\bullet(-,E(A)/J_{E(A)}),\\ 
&\mathcal{G}=\gExt^\bullet_{\Grj A}(-,A/J).
\end{align*}
Note that when we consider the Koszulity, $E(A)$ and $\Grj A$ are viewed  as a graded ring via the homological degree and the graded degree induced by the $J$-adic filtration respectively, although both $E(A)$ and $\Grj A$ are bigraded rings.

Let $\mathcal{K}_A$,  $\mathcal{K}_{E(A)}$ and $\mathcal{K}_{\Grj A}$ be the full subcategories  of finitely generated Koszul $A$-modules, classical Koszul ${E(A)}$-modules and classical Koszul ${\Grj A}$-modules  respectively in the corresponding categories.

Now we are ready to prove Theorem \ref{theorem 3} which is a generalized version of Koszul algebra duality and Koszul module duality.

\begin{theorem}\label{FE(S) cong Grj A}
Let $A$ be a Koszul ring. Then
\begin{itemize}
\item[(1)] $E(E(A))\cong \Grj A$ as graded rings (in fact, as bigraded rings).
\item[(2)] The functors  $\mathcal{E}$ and  $\mathcal{F}$ restrict to
$\xymatrix{
\mathcal{K}_A\ar[r]^{\mathcal{E}} &\mathcal{K}_{E(A)}\ar[r]^{\mathcal{F}}& \mathcal{K}_{\Grj A},
}$
 such that, for any $M\in \mathcal{K}_{A}$, $\mathcal{F}\mathcal{E}(M)\cong \Grj M$ as graded $\Grj A$-modules.
\item[(3)] The functors $\mathcal{F}$ and  $\mathcal{G}$ restrict to
$\xymatrix{
\mathcal{K}_{E(A)}\ar@<1mm>[r]^{\mathcal{F}}& \mathcal{K}_{\Grj A}\ar@<1mm>[l]^{\mathcal{G}}
}$, which gives a duality of categories.
 \end{itemize}
\end{theorem}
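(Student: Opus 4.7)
The overall strategy is to leverage the classical Koszul duality for $E(A)$, which is a classical Koszul ring by Theorem \ref{A sqk implies E(A) Koszul}(1) and left finite by Remark \ref{E(A)-left-finite}, together with the explicit linear projective resolution of $E(A)_0$ over $E(A)$ constructed inside the proof of that theorem.

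For part (1), take $P_0 = A$, so the graded projective cover of $S$ satisfies $J^n P_0/J^{n+1}P_0 = J^n/J^{n+1} = (\Grj A)_n$. The linear resolution $P'_\bullet \to E(A)_0 \to 0$ built in the proof of Theorem \ref{A sqk implies E(A) Koszul} then has $n$-th term $P'_n = \gExt_A^\bullet(J^n/J^{n+1}, S)(-n)$, which is $E(A)$-projective and generated in internal degree $n$. Computing $E(E(A))^n = \gExt^n_{E(A)}(E(A)_0, E(A)_0)$ from this resolution, linearity forces it to be concentrated in a single internal degree, so it identifies with $\gHom_{E(A)_0}\bigl((P'_n)_n,\, E(A)_0\bigr)$. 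Using $E(A)_0 \cong \gHom_A(S,S)$ together with the semisimplicity of $S$ and the standard reflexivity $\Hom_{S^{op}}(\Hom_S(N, S), S) \cong N$ for finitely generated semisimple $S$-bimodules $N$, this returns $J^n/J^{n+1} = (\Grj A)_n$. It then remains to check that the Yoneda product on the left matches the associated graded multiplication on $\Grj A$; this is a diagram chase comparing liftings of Hom-representatives along the linear resolution $P'_\bullet$.

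For part (2), the inclusion $\mathcal{E}(\mathcal{K}_A) \subseteq \mathcal{K}_{E(A)}$ is precisely Theorem \ref{A sqk implies E(A) Koszul}(2). The inclusion $\mathcal{F}(\mathcal{K}_{E(A)}) \subseteq \mathcal{K}_{\Grj A}$ then follows by applying that same theorem to the classical Koszul ring $E(A)$ and then identifying $E(E(A))$ with $\Grj A$ via (1). For the isomorphism $\mathcal{F}\mathcal{E}(M) \cong \Grj M$ for $M \in \mathcal{K}_A$, I would iterate Proposition \ref{exact sequence of ext group}(2)$'$ to build a linear $E(A)$-projective resolution of $\mathcal{E}(M)$ whose $n$-th term is $\gExt_A^\bullet(J^n M/J^{n+1}M, S)(-n)$; applying $\gHom_{E(A)}(-, E(A)_0)$ and repeating the reflexivity argument from (1), with $J^n/J^{n+1}$ replaced by $J^n M/J^{n+1}M$, yields $(\mathcal{F}\mathcal{E}(M))_n \cong (\Grj M)_n$ as $S$-modules. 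The $\Grj A$-action on both sides is then compatible by the same kind of diagram chase as in (1).

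For part (3), apply the classical Koszul module duality of \cite[Theorem 2.10.2]{BGS1} (equivalently, \cite[Theorem 5.2]{GM2}) to the left finite classical Koszul ring $E(A)$: this yields an equivalence between $\mathcal{K}_{E(A)}$ and $\mathcal{K}_{E(E(A))}$ implemented by the functors $\gExt^\bullet_{E(A)}(-, E(A)_0)$ and $\gExt^\bullet_{E(E(A))}(-, E(E(A))_0)$. Under the ring isomorphism $E(E(A)) \cong \Grj A$ from (1), which also identifies $E(E(A))_0$ with $(\Grj A)_0 = A/J$, these two functors become precisely $\mathcal{F}$ and $\mathcal{G}$, and (3) follows. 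The main obstacle is the ring-level statement in (1): verifying that the graded isomorphism $E(E(A)) \cong \Grj A$ is multiplicative. Tracking Yoneda products through the explicit linear resolution and matching them with the associated graded multiplication requires some careful bigraded bookkeeping, since $E(A)$ carries both the homological grading and the internal grading inherited from $A$, and the identification with $\Grj A$ interchanges their roles with the $J$-adic grading on $\Grj A$.
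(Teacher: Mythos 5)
Your overall strategy coincides with the paper's: both compute $\gExt^i_{E(A)}(E(A)_0,E(A)_0)$ from the linear resolution $P'_\bullet$ with $P'_i=\gExt_A^\bullet(J^iP_0/J^{i+1}P_0,S)(-i)$, identify it with $J^i/J^{i+1}$ via reflexivity of finitely generated modules over the semisimple ring $S$, obtain part (2) from the analogous resolution of $\mathcal{E}(M)$ by $\mathcal{E}(J^kM/J^{k+1}M)[-k]$, and deduce part (3) from classical Koszul duality for the left finite classical Koszul ring $E(A)$ combined with the identification $E(E(A))\cong\Grj A$. So the architecture of your proof is correct.

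The gap is in the step you defer: showing that the graded bijection $\theta: E(E(A))\to\Grj A$ is multiplicative is not a routine diagram chase for arbitrary homological degrees, and it is where essentially all of the paper's work lies. The paper's organizing idea, which your proposal lacks, is to first reduce to low degree: since $E(E(A))$ is generated in degree $1$ over its degree-$0$ part (Theorem \ref{qK iff E(M) generated in degree 0} applied to the quasi-Koszul ring $E(A)$), it suffices to verify $\theta(f\cdot g)=\theta(f)\theta(g)$ for $f$ of homological degree $j=0$ and $j=1$, the general case following by induction. The $j=1$ case is then a genuinely nontrivial computation: one must realize an element $\varphi\in\gHom_S(J^{i+1}/J^{i+2},S)$ as an element $\alpha$ of $(P'_i)_{i+1}$ via the connecting map of Proposition \ref{exact sequence of ext group} $(2)'$, decompose $\alpha=\sum_k\alpha_k\cdot p_k$ along the simple summands $S_k$ of $J^i/J^{i+1}$ and their projective covers $Q_k$, and identify the Yoneda product $\alpha_k\cdot p_k(\overline{y})$ with $\alpha_k\circ(\tilde y_k)_r$ using right-multiplication liftings, arriving at $(f\cdot g)(\varphi)=\sum_k\alpha_k(x\tilde y_k)=\varphi(\overline{xy})$. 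Without the reduction to degree-one generators, the "careful bigraded bookkeeping" you invoke would have to handle liftings across $j$ stages of the resolution simultaneously, which is substantially harder than what the paper actually does; with the reduction, the verification becomes a single concrete computation. The same caveat applies to the module statement $\mathcal{F}\mathcal{E}(M)\cong\Grj M$ in part (2), where compatibility of the $\Grj A$-actions again rests on the $j=1$ computation.
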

\begin{proof} (1) It follows from Lemma \ref{JM is fg} that $J^i/J^{i+1}$ is finitely generated  as an $S$-module for all $i \geqslant 0$.
So, there is a canonical $S$-module isomorphism given by the evaluation map
$$J^i/J^{i+1} \to \Hom_{S^{op}}(\Hom_S(J^i/J^{i+1}, S), S), \, \bar{x} \mapsto \big(\bar{x}^{**}: \varphi \to \varphi(\bar{x})\big).$$
By Theorem \ref{A sqk implies E(A) Koszul}, $E(A)$ is a classical Koszul ring. Let $P_\bullet' \to E(A)_0 \to 0$ be the minimal graded projective resolution of $E(A)_0$ given in the proof of Theorem \ref{A sqk implies E(A) Koszul}. Then $P_i'$
is generated in degree $i$ for all $i$. By using the equality \eqref{reso-of-E(A)_0} in the proof of Theorem \ref{A sqk implies E(A) Koszul} and the fact $E(A)_0 \cong S^{op}$,
\begin{align*}
&\gExt^i_{E(A)}(E(A)_0,E(A)_0)\\
=&\gHom_{E(A)}(P_i',E(A)_0)\\
\cong & \Hom_{E(A)_0}((P_i')_i, E(A)_0)\\
= &\Hom_{S^{op}}(\gHom_S(J^i/J^{i+1},S), S)\\
\cong & J^i/J^{i+1}
\end{align*}
as $S$-modules.
%
It induces a graded $S$-module isomorphism $$\theta: E(E(A))\to \mathop{\oplus}\limits_{i\geqslant 0}J^i/J^{i+1},\,  g \mapsto \overline{y}$$
for any $g\in \gHom_{E(A)}(P_i',E(A)_0)$, where $\overline{y} \in J^i/J^{i+1}$ satisfies that  $\overline{y}^{**}$ is equal to the action of
$g$ restricting to the degree $0$ part.

In fact, $\theta$ is a graded ring isomorphism as we show next.
For any
$$f\in \gExt_{E(A)}^j(E(A)_0,E(A)_0) \cong \Hom_{S^{op}}(\gHom_S(J^j/J^{j+1},S),S)$$
and 
$$g\in \gExt^i_{E(A)}(E(A)_0,E(A)_0)\cong \Hom_{S^{op}}(\gHom_S(J^i/J^{i+1},S),S)$$
with $\theta(f)= \overline{x}\in J^j/J^{j+1}$ and $\theta(g)= \overline{y}\in J^i/J^{i+1}$,
we have to show $\theta(f\cdot g)= \theta(f)\, \theta(g)$, that is, to show that $(f\cdot g)(\varphi)=\varphi(\overline{xy})$ for any $\varphi\in \gHom_S(J^{j+i}/J^{j+i+1},S)$.

Consider first the case that $j=0$.
For any 
$\varphi\in
\gHom_S(J^i/J^{i+1},S), $
$g(\varphi)=\varphi(\overline{y})\in S,$
and for any $\psi \in
\gHom_S(A/J,S), $
$f(\psi)=\psi(\overline{x})\in S$. It follows from the definition of Yoneda products that
$$(f\cdot g)(\varphi)=f(\varphi(\overline{y}))=\overline{x}\varphi(\overline{y})=\varphi(\overline{xy}).$$
Therefore, $\theta(f\cdot g)= \overline{xy} =\theta(f)\, \theta(g)\in J^i/J^{i+1}$.





Consider next the case that $j=1$. For any 
$$\varphi\in \gHom_S(J^{i+1}/J^{i+2},S) \cong \gHom_A(J^{i+1},S),$$
$\varphi$ corresponds to an element of $P_i'$ via the injective map
\begin{equation}\label{description-of-phi}
    0 \to \gHom_A(J^{i+1},S)  \to \gExt_A^1(J^i/J^{i+1},S) = (P'_i)_{i+1}
\end{equation}
given by Proposition \ref{exact sequence of ext group} $(2)'$. Let us describe this exact sequence.

Let $\pi: Q\to J^i \to 0$ be a graded projective cover of $J^i$. Then $\Omega(J^i/J^{i+1})=JQ$, and there is an exact commutative diagram
\begin{center}
\begin{tikzcd}
            & 0 \arrow[r]       & \Omega(J^i) \arrow[d] \arrow[r] & \Omega(J^i/J^{i+1}) \arrow[d] \arrow[r, "\pi"] & J^{i+1} \arrow[r] & 0 \\
            &                   & Q \arrow[d, "\pi"] \arrow[r]           & Q \arrow[d]                                    &                   &   \\
0 \arrow[r] & J^{i+1} \arrow[r] & J^i \arrow[r]                   & J^i/J^{i+1} \arrow[r]                          & 0.                 &
\end{tikzcd}
\end{center}

It follows from the construction of the long exact sequence of Ext-groups that the exact sequence \eqref{description-of-phi} is
$$0 \to \gHom_A(J^{i+1},S)  \xrightarrow{{\pi}^*} \gExt_A^1(J^i/J^{i+1},S) = (P'_i)_{i+1}.$$

Hence $\varphi\in \gHom_S(J^{i+1}/J^{i+2},S)$ corresponds to the element $\alpha:={\pi}^*(\varphi)=\varphi\circ \pi \in (P'_i)_{i+1}= \gHom_A(\Omega(J^i/J^{i+1}),S)$ of $P_i'$.

Suppose $J^i/J^{i+1}= \mathop{\oplus}\limits_{k=1}^t S_k$ where $S_k$ are some graded simple $A$-modules, and $Q = \mathop{\oplus}\limits_{k=1}^t Q_k$ with $Q_k$ being a graded projective cover of $S_k$.
Let $p_k:J^i/J^{i+1} \to S_k$ and $p'_k:JQ \to JQ_k$ be the projections induced by the canonical projection $Q \to Q_k$.
Let $\alpha_k \in\gHom_A(JQ_k,S)$ be the composition $JQ_k \subseteq JQ \stackrel{\alpha}{\rightarrow}S$.

By the definition of Yoneda products and the following commutative diagram
\begin{center}
\begin{tikzcd}[column sep=small]
0 \arrow[r] & \Omega(J^i/J^{i+1}) \arrow[r] \arrow[d, "="] \arrow[d] & Q \arrow[r] \arrow[d, "="]  & J^i/J^{i+1} \arrow[r] \arrow[d, "="]          & 0 \\
0 \arrow[r] & JQ=\Omega(Q/JQ) \arrow[r] \arrow[d, "p'_k"]                                            & Q \arrow[r] \arrow[d]  & Q/JQ\cong J^i/J^{i+1} \arrow[r] \arrow[d, "p_k"] & 0 \\
0 \arrow[r] & J(Q_k)= \Omega(S_k) \arrow[r] \arrow[d, "\alpha_k"]                                       & Q= Q_k \arrow[r] &  S_k \arrow[r]                      & 0 \\
            & S                                                                            &                        &                                           &
\end{tikzcd}
\end{center}
we have $\alpha =\sum_k \alpha_k p'_k= \sum_k \alpha_k \cdot p_k \in \gExt^1_A(S_k,S) \cdot \gHom_A(J^i/J^{i+1},S_k)$.


By taking a suitable grading shift, we may view
$$\alpha = \sum_k \alpha_k \cdot p_k \in \gExt^1_A(S,S) \cdot \gHom_A(J^i/J^{i+1},S).$$

Suppose $\tilde{y}\in Q$ such that $\pi(\tilde{y}) = y \in J^i$.
Then $\pi(x\tilde{y})=xy \in J^{i+1}$. Let
 $\tilde{y}=\sum\limits_{k=1}^t \tilde{y}_k$ such that $\tilde{y}_k\in Q_k$ and $\overline{\pi(\tilde{y}_k)}= p_k(\overline{y})\in S_k$.
Hence
\begin{equation}\label{action-of-phi}
\varphi(\overline{xy})=\varphi(\overline{\pi(x\tilde{y})})=\alpha(x\tilde{y})=\sum\alpha_k(x\tilde{y}_k).
\end{equation}

To finish the proof of the $j=1$ case, it is left to show
\begin{equation} \label{action-of-phi-2}
   (f\cdot g)(\varphi)=\sum\alpha_k(x\tilde{y}_k).
\end{equation}

Now, consider the following commutative diagram
\begin{center}

\begin{tikzcd}
P_{i+1}' \arrow[r, two heads] \arrow[d, "g_1"]          & \Omega^{i+1}(E(A)_0) \arrow[d, "g_0'"] \arrow[r, hook] & P'_i \arrow[d, "g_0"]  \arrow[r, "d'_i"] & \Omega^i(E(A)_0)  \arrow[d, "g"] \arrow[r] & 0 \\
P'_1 \arrow[r, phantom] \arrow[r, two heads] & \Omega^1(E(A)_0) \arrow[r, hook] \arrow[d, "f"]        & P'_0 \arrow[r]                  & E(A)_0 \arrow[r]                          & 0 \\
                                                        & E(A)_0                                                 &                                 &                                           &
\end{tikzcd}
\end{center}
where $g_0,g_0'$ and $g_1$ are the lifting of $g$. Since $P'_i$ is generated in degree $i$, we may assume that when restricting to degree $i$ part the action of $g_0$ is the same as that of $g$. Note that $g_0'$ is the restriction of $g_0$ on $\Omega^{i+1}(E(A)_0)$, $\Omega^{i+1}(E(A)_0)= \gExt^\bullet_A(J^{i+1}, S)[-i-1]$ and  $$\Omega^{i+1}(E(A)_0)_{i+1}= \big(\gExt^\bullet_A(J^{i+1}, S)[-i-1]\big)_{i+1}=\gHom_A(J^{i+1}, S).$$
By the definition of Yoneda products,
$$(f\cdot g)(\varphi)=(f\circ g_0')(\varphi)=f(g_0(\alpha))=f(g_0(\sum \alpha_k\cdot p_k)).$$

Since $g_0$ is $E(A)$-linear,
$$f(g_0(\sum \alpha_k\cdot p_k))=f(\sum\alpha_k\cdot g_0(p_k))=f(\sum\alpha_k\cdot  p_k(\overline{y})_r)$$
where $p_k(\overline{y})_r \in \gHom_S(A/J, S_k)$ is the right multiplication of $p_k(\overline{y}) \in S_k$.

 To see the Yoneda product of $\alpha_k$ with $p_k(\overline{y})_r$, we need the following commutative diagram
\begin{center}
\begin{tikzcd}
0 \arrow[r] & JQ_k \arrow[d, "(\tilde{y}_k)_r"] \arrow[r] & Q_k \arrow[d, "(\tilde{y}_k)_r"] \arrow[r] & S_k \arrow[d, "(p_k(\overline{y}))_r"] \arrow[r] & 0 \\
0 \arrow[r] & JQ_k \arrow[d, "\alpha_k"] \arrow[r]        & Q_k \arrow[r]                              & S_k \arrow[r]                                      & 0 \\
            & S                                       &                                          &                                                  &
\end{tikzcd}
\end{center}
where $(*)_r$ is the right multiplication given by $*$. Then $$\alpha_k\cdot p_k(\overline{y})=\alpha_k\circ (\tilde{y}_k)_r$$
and $f(\sum\alpha_k\cdot p_k(\overline{y}))=f(\sum\alpha_k\circ (\tilde{y}_k)_r)
   =(\sum\alpha_k\circ (\tilde{y}_k)_r)(x)=\sum\alpha_k(x\tilde{y}_k)$.
Hence
$ (f\cdot g)(\varphi)=\sum\alpha_k(x\tilde{y}_k)$, that is, \eqref{action-of-phi-2} holds.

Therefore $(f\cdot g)(\varphi)=\varphi(\overline{xy})$ and so $\theta(f\cdot g)=\overline{xy}=\theta(f) \theta(g)$. The proof of the $j=1$ case is finished.

Finally, consider the general case that $j > 1$.  By Theorem \ref{qK iff E(M) generated in degree 0},
$$\gExt^{j}_{E(A)}(E(A)_0,E(A)_0) \ni f=\sum\limits_{p=1}^s h_p\cdot f_p$$
for some $h_p\in\gExt^1_{E(A)}(E(A)_0, E(A)_0)$ and $f_p\in \gExt^{j-1}_{E(A)}(E(A)_0,E(A)_0)$.

Suppose
$\theta(h_p)=\overline{x}_p\in J/J^2$  and $\theta(f_p)=\overline{x}'_p\in J^{j-1}/J^{j}.$
Then, by the $j=1$ case already proved and by induction hypothesis,
$$\theta(f)=\sum \overline{x_px'_p}\in J^{j}/J^{j+1} \textrm{ and } \theta(f_p\cdot g)=\overline{x'_py}\in J^{i-1+j}/J^{i+j}.$$
Thus by  the $j=1$ case again,
 $$\theta(f\cdot g)=\theta(\sum h_p\cdot (f_p\cdot g))=\sum\overline{x_p}\cdot \overline{x'_py}=\sum\overline{x_px'_py}=\theta(f)\theta(g).$$
It follows that $\theta$ is an isomorphism of graded rings.





(2) By definition, $\mathcal{F}$  is a functor from the category of graded $E(A)$-modules to the category of graded $E(E(A))$-modules.  Since $E(E(A)) \cong \Gr_J A$ as graded rings by (1), $\mathcal{F}$ can be naturally viewed as a functor from the category of graded $E(A)$-modules to the category of graded $\Gr_J A$-modules. It follows from the linear $E(A)$-projective resolution
$$ \cdots \to \mathcal{E}(J^iM/J^{i+1}M)[-i]\to \cdots \to  \mathcal{E}(M/JM) \to \mathcal{E}(M) \to 0$$
(see the proof  of Theorem \ref{A sqk implies E(A) Koszul} (2)) that
\begin{align*}
\mathcal{F}(\mathcal{E}(M))_i & = \gHom_{E(A)}(\mathcal{E}(J^iM/J^{i+1}M)[-i], E(A)_0)\\
& \cong \Hom_{E(A)_0}(\gHom_S(J^iM/J^{i+1}M, S), E(A)_0)\\
 & = \Hom_{S^{op}}(\gHom_S(J^iM/J^{i+1}M, S),S) \\
 & \cong J^iM/J^{i+1}M,
\end{align*}
where the $\cong$ holds because $J^iM/J^{i+1}M$ is finitely generated. 
A similar argument to (1) shows that $\mathcal{F}(\mathcal{E}(M))\cong \Grj M$ as graded $\Grj A$-modules.

(3) Since both $E(A)_0$ and $(\Grj A)_0$ are artinian semisimple, by replacing $A$ with $E(A)$ and $\Grj A$ respectively and repeating the proof above, it follows that $\mathcal{F},\mathcal{G}$ give a duality between $\mathcal{K}_{E(A)}$ and $\mathcal{K}_{\Grj A}$.
Or by Theorem \ref{A sqk implies E(A) Koszul} and Remark \ref{E(A)-left-finite}, $E(A)$ is left finite classical Koszul with  $E(A)_0$ artinian semisimple, then the conclusion follows from the classical Koszul theory.
\end{proof}

Under the assumptions in Theorem \ref{FE(S) cong Grj A}, if furthermore $A_0$ is semisimple, then $A\cong \Grj A$ as graded rings. Therefore, Theorem  \ref{FE(S) cong Grj A} reduces to the classical Koszul duality (\cite[Theorem 5.2]{GM2}, \cite[Theorem 2.10.2]{BGS1}).

The following result gives the converse statements of Theorem \ref{FE(S) cong Grj A} (1) and (2) in some sense.
\begin{theorem}\label{FE(M) cong Grj M imply M sqk}
Let $A$ be a left finite $\mathbb{N}$-graded ring generated in degree $1$  with $A_0$ noetherian semiperfect.
\begin{itemize}
\item[(1)] If $E(A)$ is generated in degree $1$  and $E(E(A))\cong \Grj A$ as graded rings, then $A$ is Koszul.
\item[(2)] Suppose that $A$ is Koszul and $M$ is a finitely generated graded $A$-module. If $\mathcal{E}(M)$ is generated in degree $0$ and $\mathcal{F}(\mathcal{E}(M))\cong \Grj M$ as graded $\Grj A$-modules, then $M$ is Koszul.
\end{itemize}
\end{theorem}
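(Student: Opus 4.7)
The plan is to combine, for each part, the \textit{generation} hypothesis (yielding quasi-Koszulity via Theorem \ref{qK iff E(M) generated in degree 0}) with the \textit{isomorphism} hypothesis (which upgrades quasi-Koszulity to classical Koszulity of the relevant minimal resolution over $E(A)$).

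For part (1), I first apply Theorem \ref{qK iff E(M) generated in degree 0}(2) to the hypothesis that $E(A)$ is generated in degree $1$, obtaining that $A$ is quasi-Koszul. By Theorem \ref{A sqk implies E(A) Koszul}(1), it suffices to show that $E(A)$ is a classical Koszul ring. Observe that $\Grj A$ is always generated in degree $1$: the identity $J^n = J \cdot J^{n-1}$ gives $(\Grj A)_n = (\Grj A)_1 \cdot (\Grj A)_{n-1}$. Hence the hypothesis $E(E(A)) \cong \Grj A$ implies $E(E(A))$ is generated in degree $1$, and Theorem \ref{qK iff E(M) generated in degree 0}(2) applied to $E(A)$ (whose degree-$0$ part $\gHom_A(S,S) \cong S^{\mathrm{op}}$ is artinian semisimple) gives that $E(A)$ is itself quasi-Koszul. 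The decisive step is to upgrade this quasi-Koszulity of $E(A)$ to classical Koszulity by exploiting the bigraded structure on $E(E(A))$: since $E(A)$ is generated in degree $1$, the internal degree of $\gExt^n_{E(A)}(E(A)_0, E(A)_0)$ is always at least $n$, with equality corresponding precisely to linearity of the minimal resolution of $E(A)_0$ over $E(A)$. Adapting the construction of the map $\theta$ in the proof of Theorem \ref{FE(S) cong Grj A}(1), one obtains a natural graded surjection from the diagonal part of $E(E(A))$ (internal degree equal to homological degree) onto $\Grj A$, valid under only the quasi-Koszul hypothesis on $A$. Combined with the hypothesis $E(E(A)) \cong \Grj A$ and a dimension count (using left-finiteness together with Lemma \ref{JM is fg} to ensure each $J^n/J^{n+1}$ has finite length as an $S$-module), this forces the off-diagonal part of $E(E(A))$ to vanish. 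The resolution is thus linear, $E(A)$ is classical Koszul, and $A$ is Koszul by Theorem \ref{A sqk implies E(A) Koszul}(1).

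For part (2), the plan is structurally the same: Theorem \ref{qK iff E(M) generated in degree 0}(1) applied to the hypothesis that $\mathcal{E}(M)$ is generated in degree $0$ gives $M$ quasi-Koszul, and Theorem \ref{A sqk implies E(A) Koszul}(2) reduces the problem to showing $\mathcal{E}(M)$ is a classical Koszul $E(A)$-module. Since $A$ is already assumed Koszul, the framework of Theorem \ref{FE(S) cong Grj A} is fully available; following the proof of Theorem \ref{FE(S) cong Grj A}(2), one constructs a natural surjection from the diagonal part of $\mathcal{F}\mathcal{E}(M)$ onto $\Grj M$ as graded $\Grj A$-modules, and the isomorphism hypothesis $\mathcal{F}\mathcal{E}(M) \cong \Grj M$ together with the analogous dimension comparison forces the off-diagonal part of $\mathcal{F}\mathcal{E}(M)$ to vanish, i.e., the minimal $E(A)$-projective resolution of $\mathcal{E}(M)$ is linear.

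The main obstacle, in each part, is the dimension-counting step: one must construct the natural diagonal surjection from the bigraded double-Ext (respectively its module analogue) onto $\Grj A$ (respectively $\Grj M$) under only the quasi-Koszul assumption, then carefully check the finiteness of the relevant $S$-modules to make the dimension comparison rigorous. Making this precise requires a more delicate analysis of the minimal projective resolution of $E(A)_0$ over $E(A)$ than was needed for the forward direction in Theorem \ref{FE(S) cong Grj A}.
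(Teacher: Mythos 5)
Your first half is exactly the paper's argument: from the generation hypotheses you correctly extract quasi-Koszulity of $A$ (resp.\ of $M$) via Theorem \ref{qK iff E(M) generated in degree 0}, and from the isomorphism with $\Grj A$ (which, as you note, is always generated in degree $1$) you correctly extract quasi-Koszulity of $E(A)$ as a ring (resp.\ of $\mathcal{E}(M)$ as an $E(A)$-module). One small omission there: to apply Theorem \ref{qK iff E(M) generated in degree 0} to the ring $E(A)$ you also need $E(A)$ to be left finite, which follows from the quasi-Koszulity of $A$ via Remark \ref{E(A)-left-finite} and should be said.

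The genuine gap is your ``decisive step.'' You propose to upgrade quasi-Koszulity of $E(A)$ to classical Koszulity by building a diagonal surjection from the bigraded $E(E(A))$ onto $\Grj A$ \emph{under only the quasi-Koszul hypothesis} and then running a length count to kill the off-diagonal part. That surjection is never constructed --- the map $\theta$ of Theorem \ref{FE(S) cong Grj A}(1) is built from the explicit linear resolution \eqref{reso-of-E(A)_0}, whose existence already presupposes the Koszulity you are trying to prove, so ``adapting'' it is not a routine matter; you yourself flag this as the main obstacle and leave it open. More to the point, the entire detour is unnecessary: $E(A)$ is generated in degree $1$ (by hypothesis in part (1), and because it is classically Koszul in part (2)) and $E(A)_0\cong S^{op}$ is artinian semisimple, so Proposition \ref{qk and classical koszul} applies verbatim with $A:=E(A)$ and $M:=E(A)_0$ (resp.\ $M:=\mathcal{E}(M)$, which is generated in degree $0$ by hypothesis). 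The implication $(2)\Rightarrow(1)$ of that proposition says that over a ring generated in degree $1$ with artinian semisimple degree-zero part, quasi-Koszul already equals classically Koszul. This is precisely how the paper closes the argument: quasi-Koszulity of $E(A)$ gives classical Koszulity of $E(A)$ at once, and Theorem \ref{A sqk implies E(A) Koszul} then yields Koszulity of $A$ (resp.\ of $M$). Note also that the isomorphism $E(E(A))\cong\Grj A$ is used only to get generation in degree $1$; no dimension comparison against $\Grj A$ is needed.
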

\begin{proof} (1) Since $E(A)$ is generated in degree $1$, it follows from Theorem \ref{qK iff E(M) generated in degree 0} that $A$ is quasi-Koszul. Then $E(A)$ is left finite as we see in Remark \ref{E(A)-left-finite}.
If $E(E(A))\cong \Grj A$, then it is generated in degree $1$. Thus by Theorem \ref{qK iff E(M) generated in degree 0} again, $E(A)$ is quasi-Koszul. By Proposition \ref{qk and classical koszul}, $E(A)$ is a classical Koszul ring. Therefore $A$ is Koszul by Theorem \ref{A sqk implies E(A) Koszul}.

(2) The proof is similar to that of (1).
\end{proof}

\section{More characterizations of Koszul property}
In this section
we first prove that if $M$ is a Koszul $A$-module, then $\Grj M$ is a classical Koszul $\Grj A$-module. The converse statement is true under an additional condition that $J(A_0)$ is nilpotent.  As a corollary, it is proved that $A$ is a Koszul ring if and only if so is $A^{op}$. More characterizations of the Koszulity are given  under the condition that $A_0$ is artinian.

\subsection{(Quasi-)Koszulity of A versus Koszulity of GrA }
The following result is trivial if $A_0$ is semisimple.
\begin{theorem}\label{A sqk implies Gr A Koszul}
Let $A$ be a left finite $\mathbb{N}$-graded ring generated in degree $1$  with $A_0$ noetherian semiperfect.
\begin{enumerate}
\item If $M$ is a Koszul $A$-module, then $\Grj M$ is a classical Koszul $\Grj A$-module.
\item If $A$ is a Koszul ring, then $\Grj A$ is a classical Koszul ring.
\end{enumerate}
\end{theorem}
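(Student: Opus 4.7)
The plan is to prove (1) first; then (2) is immediate. Indeed, if $A$ is Koszul then $S = A/J$ is a Koszul $A$-module, and since $JS = 0$ we have $\Grj S = S$ concentrated in $J$-adic degree $0$. Applying (1) to $M = S$ gives that $S$ is classical Koszul over $\Grj A$, which (using that $(\Grj A)_0 = A/J$ is artinian semisimple, as $A_0$ is semiperfect) is precisely the statement that $\Grj A$ is a classical Koszul ring.

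For (1), I would start with a minimal graded projective resolution $\cdots \to P_n \xrightarrow{d_n} P_{n-1} \to \cdots \to P_0 \to M \to 0$ given by Proposition \ref{minimal projective resolution}(1), and pass to associated gradeds with a shift. Because $A_0$ is semiperfect, each $P_n$ is a direct sum of (original-grading) shifts of the indecomposable projectives $Ae_j$ for primitive idempotents $e_j$; the identification $\Grj(Ae_j) \cong (\Grj A)e_j$ then shows that $\Grj P_n$ is a graded projective $\Grj A$-module generated in $J$-adic degree $0$. Minimality yields $d_n(P_n) \subseteq JP_{n-1}$, so $A$-linearity gives $d_n(J^kP_n) \subseteq J^{k+1}P_{n-1}$; consequently $d_n$ descends to a $\Grj A$-linear map of $J$-adic degree $0$,
\[\overline{d_n}:\Grj P_n(-n)\longrightarrow \Grj P_{n-1}(-n+1).\]
Since $\Grj M$ is visibly generated in $J$-adic degree $0$, to establish classical Koszulity of $\Grj M$ it suffices to prove exactness of
\[\cdots \to \Grj P_n(-n) \to \Grj P_{n-1}(-n+1) \to \cdots \to \Grj P_0 \to \Grj M \to 0.\]

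The exactness verification is the main step, and the only place where the Koszulity of $M$ intervenes. Fix $n \geqslant 1$ and a $J$-adic degree $k \geqslant n-1$, and set $i = k - n + 1 \geqslant 0$. If $\bar{y}$ in degree $k$ is in the kernel of $\overline{d_{n-1}}$, represented by $y \in J^iP_{n-1}$ with $d_{n-1}(y) \in J^{i+2}P_{n-2}$, then $d_{n-1}(y) \in \Omega^{n-1} \cap J^{i+2}P_{n-2} = J^{i+1}\Omega^{n-1} = d_{n-1}(J^{i+1}P_{n-1})$ by the Koszul condition on $\Omega^{n-1}$. Choosing $y' \in J^{i+1}P_{n-1}$ with $d_{n-1}(y') = d_{n-1}(y)$ gives $y - y' \in \Omega^n \cap J^iP_{n-1} = J^{i-1}\Omega^n = d_n(J^{i-1}P_n)$ by the Koszul condition on $\Omega^n$ (with the convention $J^{-1}\Omega^n := \Omega^n$ when $i = 0$), so $\bar{y}$ lies in the image of $\overline{d_n}$. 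The reverse inclusion is $d_{n-1}d_n = 0$; degrees $k < n-1$ vanish on both sides; and surjectivity onto $\Grj M$ follows from surjectivity of $d_0$ via $d_0(J^kP_0) = J^kM$. The main obstacle I anticipate is essentially bookkeeping: keeping the two gradings (the original $\mathbb{N}$-grading and the $J$-adic grading) and the $(-n)$-shift conventions straight. Once that is set up, the Koszul identities for $\Omega^n$ and $\Omega^{n-1}$ combine cleanly to produce exactness.
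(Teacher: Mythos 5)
Your proof is correct and is essentially the paper's own argument: the paper expresses the same computation by saying that Koszulity makes each sequence $0\to\Ker d_i[-1]\to P_i\to \im d_i\to 0$ strict exact for the $J$-adic filtration, so that passing to associated gradeds preserves exactness and splicing yields the linear resolution $\cdots\to\Grj P_i[-i]\to\cdots\to\Grj P_0\to\Grj M\to 0$ — precisely the complex whose exactness you verify by the element chase. The only adjustment needed is at the augmentation ($n=1$), where the kernel condition reads $d_0(y)\in J^{i+1}M$ rather than $J^{i+2}$ because $\Grj M$ carries no shift; that case is the easy one and is handled directly by $d_0(J^{i+1}P_0)=J^{i+1}M$.
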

\begin{proof} It suffices to prove (1).
Let $P_\bullet\to M\to 0$ be a minimal graded projective resolution of ${}_AM$. Since $M$ is Koszul, it follows that, with the  $J$-adic filtration,
$$0\to \Ker d_0[-1] \xrightarrow{i_0} P_0\xrightarrow{d_0} M\to 0$$
is strict exact, where $\Ker d_0[-1]$ is the shift of the $J$-adic filtration.
Therefore, there is an exact sequence of $\Grj A$-modules
$$0\to \Grj \Ker d_0[-1]\to \Grj P_0\to \Grj M\to 0.$$
It is easy to see that $\Grj P_0$ is a graded projective cover of the $\Grj A$-module $\Grj M$. By replacing $M$ with $\Ker d_0[-1]$, and doing this repeatedly, we can construct a minimal graded projective resolution of the $\Grj A$-module $\Grj M$:
$$\cdots\to \Grj P_i[-i]\to\cdots\to \Grj P_0\to \Grj M\to 0$$
which is a linear projective resolution. Thus $\Grj M$ is a classical Koszul $\Grj A$-module.
\end{proof}


A graded left ideal $I$ of an $\mathbb{N}$-graded ring $A$ is called degree-wise nilpotent, if for any positive integer $t$, there is an integer $N$ such that for all $n\geqslant N$, $(I^n)_{\leqslant t}=0$.

\begin{lemma}\label{locally nilpotent}
Let $A$ be an $\mathbb{N}$-graded ring and $I$ be a graded left ideal of $A$.
Then $I$ is degree-wise nilpotent if and only if $I_0$ is nilpotent.
\end{lemma}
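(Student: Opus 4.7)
The plan is to prove the two directions separately, with the ``only if'' direction being essentially by inspection and the ``if'' direction requiring a pigeonhole argument.

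For the ``only if'' direction, I would simply take $t=0$ in the definition of degree-wise nilpotency: there exists $N$ with $(I^n)_{\leqslant 0} = (I^n)_0 = 0$ for all $n \geqslant N$. Since $A$ is $\mathbb{N}$-graded, the degree $0$ component of $I^n$ is exactly $(I_0)^n$, so $I_0^N = 0$, showing $I_0$ is nilpotent.

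For the ``if'' direction, suppose $I_0^m = 0$. Fix $t > 0$; I will show $N := m(t+1)$ works, i.e.\ $(I^n)_{\leqslant t} = 0$ for all $n \geqslant N$. A typical homogeneous element of $(I^n)_{\leqslant t}$ is a sum of products $a_1 a_2 \cdots a_n$ with $a_i \in I_{j_i}$ and $j_1 + \cdots + j_n \leqslant t$. Let $k$ be the number of indices $i$ with $j_i \geqslant 1$; then $k \leqslant t$. The remaining $n - k \geqslant n - t$ factors lie in $I_0$, and they are partitioned by the positive-degree factors into at most $k+1$ consecutive blocks. The key observation is that any such block, being a product in $I_0$, is zero as soon as it contains $\geqslant m$ factors.

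By pigeonhole, since we are distributing $n - k$ items into $k + 1 \leqslant t + 1$ boxes, some box has at least $\lceil (n-k)/(k+1) \rceil$ items. For $n \geqslant m(t+1)$ and $k \leqslant t$ we have $n - k \geqslant m(t+1) - t \geqslant (k+1)(m-1) + 1$, so some box contains $\geqslant m$ elements of $I_0$; that block, hence the entire product, vanishes. This proves $(I^n)_{\leqslant t} = 0$ for $n \geqslant N$, as required. The main (mild) obstacle is getting the pigeonhole bound right; everything else is bookkeeping with the grading.
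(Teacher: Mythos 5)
Your proof is correct and follows essentially the same route as the paper: write $(I^n)_{\leqslant t}$ as sums of products of homogeneous elements of $I$, observe that at least $n-t$ of the factors lie in $I_0$ and are grouped into at most $t+1$ consecutive blocks, and apply pigeonhole to force one block into $I_0^m=0$. The paper merely states the decomposition and asserts the conclusion ``for sufficiently large $n$,'' whereas you supply the explicit bound $N=m(t+1)$; the converse direction is the same trivial observation $(I^n)_0=(I_0)^n$ in both.
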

\begin{proof}
For any integer $l \geqslant 0$, the degree $l$ part of $I^n$ has the form
$$(I^n)_l=\sum\limits_{i_1+\cdots+i_{s+1}=n-s, j_1 + \cdots + j_s =l}I_0^{i_1}I_{j_1}I_0^{i_2}I_{j_2}\cdots I_0^{j_s}I_{j_s}I_0^{i_{s+1}}$$
where $j_1, \cdots, j_s \geq 1$. Note that $i_1+\cdots+i_{s+1}=n-s \geqslant n - l$.

If $I_0$ is nilpotent, then, for any fixed positive integer $t$, $(I^n)_{\leqslant t}=0$ for sufficiently large $n$.

The other direction is trivial.
\end{proof}


Hence, the graded Jacobson radical $J$ of $A$ is degree-wise nilpotent if and only if $J(A_0)$ is nilpotent, under the hypothesis of Theorem \ref{A sqk implies Gr A Koszul}. Recall that a ring is noetherian semiperfect with nilpotent Jacobson radical if and only if it is a noetherian perfect ring if and only if it is an artinian ring.

\begin{theorem}\label{GrM is Koszul implies M is qK}
Suppose that $A$ is a left finite $\mathbb{N}$-graded ring generated in degree $1$ such that $A_0$ is artinian.
\begin{enumerate}
\item If $M$ is a left finite bounded below graded $A$-module such that $\Grj M$ is a classical Koszul $\Grj A$-module, then $M$ is a Koszul $A$-module.
\item If $\Grj A$ is a classical Koszul ring, then $A$ is a Koszul ring.
\end{enumerate}
\end{theorem}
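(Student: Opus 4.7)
I would first deduce (2) from (1) by applying (1) to $M=S=A/J$: $\Grj S$ is $S$ itself concentrated in $J$-adic degree $0$ (since $JS=0$), and $S=(\Grj A)_0$ is artinian semisimple because $A_0$ artinian forces $J(A_0)$ to be nilpotent. Thus $\Grj S$ is a classical Koszul $\Grj A$-module if and only if $\Grj A$ is a classical Koszul ring, whence (2) follows from (1).

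For (1), I fix a minimal graded projective resolution $P_\bullet\xrightarrow{d_\bullet}M\to 0$ supplied by Proposition \ref{minimal projective resolution}(1) and set $\Omega^n=\Ker d_{n-1}$. The task is to show $J^k\Omega^n=\Omega^n\cap J^{k+1}P_{n-1}$ for all $n,k\geqslant 0$; the inclusion ``$\subseteq$'' is free. I treat first the case $n=1$. The map $d_0$ induces a $\Grj A$-linear surjection $\Grj P_0\to\Grj M$ whose kernel in $J$-adic degree $k$ is canonically identified with $(\Omega^1\cap J^kP_0)/(\Omega^1\cap J^{k+1}P_0)$, and this surjection is a graded projective cover since $P_0\to M$ is a projective cover and $(\Grj A)_0=S$ is semisimple. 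Hence the kernel is the first syzygy of $\Grj M$ in its minimal graded projective resolution over $\Grj A$; the classical Koszulity of $\Grj M$ forces this syzygy to be generated in $J$-adic degree $1$, which translates into
\[
\Omega^1\cap J^kP_0 \;=\; J^{k-1}\Omega^1 \,+\, \Omega^1\cap J^{k+1}P_0 \qquad\text{for every } k\geqslant 1.
\]

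Iterating this identity yields $\Omega^1\cap J^kP_0=J^{k-1}\Omega^1+\Omega^1\cap J^{k+m}P_0$ for every $m\geqslant 1$. Since $A_0$ is artinian, $J(A_0)$ is nilpotent, so $J$ is degree-wise nilpotent by Lemma \ref{locally nilpotent}; hence within each fixed internal degree the tail $\Omega^1\cap J^{k+m}P_0$ vanishes for $m$ sufficiently large, giving $\Omega^1\cap J^{k+1}P_0=J^k\Omega^1$ and establishing the Koszul condition at $n=1$. The higher cases follow by induction on $n$: once $n=1$ is settled, the $J$-adic associated graded $\Grj\Omega^1$ coincides (up to the appropriate shift) with the first syzygy of $\Grj M$, which is itself classical Koszul over $\Grj A$ by the general fact that syzygies of classical Koszul modules are classical Koszul; applying the same argument with $M$ replaced by $\Omega^1$ then gives the next case, and so on. The main obstacle is precisely the descending iteration that converts ``generated in degree $1$'' into strict equality of filtrations, which genuinely relies on the degree-wise nilpotence of $J$; this is exactly why ``$A_0$ artinian'' cannot be weakened to ``$A_0$ noetherian semiperfect'' in this theorem.
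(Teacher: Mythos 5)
Your proposal is correct and follows essentially the same route as the paper: identify the first syzygy of $\Grj M$ with $\Gr\Ker d_0$ under the induced filtration, use classical Koszulity to get $\Ker d_0\cap J^kP_0=J^{k-1}\Ker d_0+\Ker d_0\cap J^{k+1}P_0$, iterate, and kill the tail degree-by-degree via the nilpotence of $J(A_0)$ (Lemma \ref{locally nilpotent}), then induct on the syzygies. The paper leaves the reduction of (2) to (1) and the shift bookkeeping in the induction implicit, which you spell out correctly.
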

\begin{proof} It suffices to prove (1).
Let $P_\bullet\to M\to 0$ be a minimal graded projective resolution of ${}_AM$.
Then $\Grj P_0 \to \Grj M\to 0$ is a graded projective cover of $\Grj M$.

Consider the exact sequence
$0\to \Ker d_0\xrightarrow{i_0} P_0\xrightarrow{d_0} M\to 0$
which is strict exact if we endow  $P_0$, $M$ with the $J$-adic filtration and $\Ker d_0$ with the induced submodule filtration, that is, $F_n\Ker d_0=\Ker d_0 \cap J^nP_0$. Then
$$0\to \Gr \Ker d_0\to \Grj P_0\to \Grj M\to 0$$
 is exact, where $\Gr \Ker d_0=\oplus (F_n\Ker d_0/F_{n+1}\Ker d_0)$.

Since $\Grj M$ is classically Koszul, $\Gr \Ker d_0$ is generated in degree $1$. Therefore
\begin{small}
$$
\dfrac{\Ker d_0 \cap J^nP_0}{\Ker d_0 \cap J^{n+1}P_0}=\dfrac{J^{n-1}}{J^n} \dfrac{\Ker d_0}{\Ker d_0 \cap J^2 P_0}=\dfrac{J^{n-1}\Ker d_0+\Ker d_0 \cap J^{n+1}P_0}{\Ker d_0 \cap J^{n+1}P_0}.
$$
\end{small}
Hence
\begin{align*}
\Ker d_0 \cap J^nP_0 &=J^{n-1}\Ker d_0+ \Ker d_0 \cap J^{n+1}P_0 \\
&=J^{n-1}\Ker d_0+J^{n}\Ker d_0+\Ker d_0 \cap J^{n+2}P_0 \\
&=\cdots\\
&=J^{n-1}\Ker d_0+\Ker d_0 \cap J^{n+m}P_0
\end{align*}
for any positive integers $n$ and $m$.

Since $A_0$ is artinian, it follows from Lemma \ref{locally nilpotent} that $J$ is degree-wise nilpotent. Hence, for any fixed $n$ and any fixed $t$, there is a  large enough integer $m$ such that $(J^{n+m}P_0)_{\leqslant t}=0$. Therefore
\begin{align*}
(\Ker d_0 \cap J^nP_0 )_{\leqslant t}&=(J^{n-1}\Ker d_0)_{\leqslant t}+ (\Ker d_0 \cap J^{n+m}P_0)_{\leqslant t}\\
&=(J^{n-1}\Ker d_0)_{\leqslant t}.
\end{align*}
By the arbitrariness of $t$,
$J^{n-1}\Ker d_0 = \Ker d_0 \cap J^nP_0$ for all $n \geqslant 1$.


Replacing $M$ by $\Ker d_0$ and by induction, it follows that
$$J^{n-1}\Ker d_i=\Ker d_i \cap J^nP_i$$ for all $n \geqslant 1$ and $i \geqslant 0$. Hence $M$ is Koszul.
\end{proof}

\begin{corollary}\label{minimal projecitve resolution of filtration module}
Keep the same assumptions for $A$ as in Theorem \ref{GrM is Koszul implies M is qK}. Suppose that $M$ is a left finite bounded below Koszul $A$-module.
\begin{enumerate}
\item If $\cdots\to P_i\to \cdots \to P_0\to M\to 0$ is a minimal graded projective resolution of ${}_AM$, then $$\cdots\to \Grj P_i[-i]\to\cdots\to \Grj P_0\to \Grj M\to 0$$ is a minimal graded projective resolution of $\Grj\! A$-module $\Grj\! M$.
\item $\pdim {}_AM=\pdim {}_{Gr_J A}\Grj M$, where  $\pdim$ means the projective dimension.
\end{enumerate}
\end{corollary}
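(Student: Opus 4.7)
The plan is to derive (1) directly from the construction already carried out in the proof of Theorem \ref{A sqk implies Gr A Koszul}(1), and then to deduce (2) as a formal consequence of (1).

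For (1), the first step is to observe that the Koszul condition $J^{k-1}\Ker d_i = \Ker d_i \cap J^k P_i$ for all $i\geqslant 0$ and $k\geqslant 1$ is exactly the statement that the inclusion $\Ker d_i \hookrightarrow P_i$ is strict filtered, where $P_i$ carries the $J$-adic filtration and $\Ker d_i$ carries the $1$-shifted $J$-adic filtration. Applying $\Grj$ to each short exact sequence $0\to \Ker d_i[-1] \to P_i \to \im d_i \to 0$ therefore yields an exact sequence of graded $\Grj A$-modules, and splicing these together produces the complex $\cdots\to \Grj P_i[-i]\to \cdots\to \Grj P_0\to \Grj M\to 0$ and simultaneously proves exactness.

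The second step for (1) is to check projectivity and minimality. Since $A_0$ is semiperfect, the remark after Proposition \ref{minimal projective resolution} says each $P_i$ is a direct sum of shifts of indecomposable projectives $Ae_j$ with $e_j\in A_0$; since $e_j$ lies in filtration degree $0$, a direct computation gives $\Grj(Ae_j)\cong (\Grj A)\bar{e}_j$, so each $\Grj P_i$ is graded projective. Minimality is then immediate from the fact that $\im d_{i+1}\subseteq JP_i$ passes through $\Grj$ to give $\im(\Grj d_{i+1})\subseteq J_{\Grj A}\Grj P_i$, so that $\Grj P_0\twoheadrightarrow \Grj M$ is a projective cover and similarly at each higher stage.

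Part (2) is then a quick consequence. If $\pdim_A M = n < \infty$, apply (1) to a minimal resolution of length $n$ to obtain a minimal graded projective resolution of $\Grj M$ of length $n$, so $\pdim_{\Grj A}\Grj M\leqslant n$; the reverse inequality follows because $P_n\neq 0$ implies $P_n/JP_n\neq 0$ by Nakayama (Lemma \ref{Nak lemma}), and this is precisely the lowest graded piece of $\Grj P_n[-n]$, so $\Grj P_n[-n]\neq 0$. The case $\pdim_A M=\infty$ is handled by the same Nakayama argument applied to every $P_i$. The only subtle point in the whole argument is the strict filtered property used in the first step; this is exactly where the full Koszul condition (rather than merely quasi-Koszulity) is needed.
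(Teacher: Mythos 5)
Your proposal is correct and follows essentially the route the paper intends: the paper leaves this corollary without a separate proof because parts of it are exactly the construction in the proof of Theorem \ref{A sqk implies Gr A Koszul}(1), where the Koszul condition is read as strict exactness of $0\to \Ker d_i[-1]\to P_i\to \im d_i\to 0$ for the $J$-adic filtrations, $\Grj$ is applied, and $\Grj P_i$ is observed to be a graded projective cover at each stage. Your additional checks (projectivity of $\Grj P_i$ via $\Grj(Ae_j)\cong(\Grj A)\bar e_j$, and the Nakayama argument giving equality of projective dimensions in (2)) are the right way to fill in what the paper treats as immediate.
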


\begin{corollary}\label{A is sqk iff Aop is}
Suppose that $A$ is a left and right finite $\mathbb{N}$-graded ring generated in degree $1$  with $A_0$ artinian. Then $A$ is a Koszul ring if and only if so is $A^{op}$.
\end{corollary}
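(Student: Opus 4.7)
The plan is to reduce the problem to the classical Koszul theory via the associated graded ring $\Gr_J A$ and then invoke the known symmetry of classical Koszulity. Since $A_0$ is artinian, the combination of Theorem \ref{A sqk implies Gr A Koszul}(2) and Theorem \ref{GrM is Koszul implies M is qK}(2) yields
\[
A \text{ is Koszul} \iff \Gr_J A \text{ is classically Koszul.}
\]
Applied to $A^{op}$, which is also left and right finite, generated in degree $1$, with $(A^{op})_0 = A_0^{op}$ artinian, the same equivalence gives
\[
A^{op} \text{ is Koszul} \iff \Gr_{J(A^{op})} A^{op} \text{ is classically Koszul.}
\]

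Next I would observe that the graded Jacobson radical of $A^{op}$ coincides as a set with $J = J(A_0)\oplus A_{\geqslant 1}$, so there is a natural identification $\Gr_{J(A^{op})} A^{op} \cong (\Gr_J A)^{op}$ as $\mathbb{N}$-graded rings with respect to the $J$-adic grading. Hence the corollary reduces to showing that $\Gr_J A$ is classically Koszul if and only if $(\Gr_J A)^{op}$ is, which is exactly the classical Beilinson-Ginzburg-Soergel theorem \cite[Theorem 2.2.1]{BGS1}, applied to $\Gr_J A$.

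To invoke that theorem I must verify that $\Gr_J A$, viewed as $\mathbb{N}$-graded by the $J$-adic degree, is left finite, right finite, generated in degree $1$, and has degree zero part $A_0/J(A_0)$ artinian semisimple. Generation in degree $1$ is immediate from $J^i = J \cdot J^{i-1}$, and the semisimplicity of $A_0/J(A_0)$ follows from $A_0$ being semiperfect. For the finiteness: since $A$ is generated in degree $1$, $A_n = A_1^n \subseteq J^n$ for $n \geqslant 1$, so $A_n \subseteq J^{i+1}$ whenever $n \geqslant i+1$. Consequently the $J$-adic piece $J^i/J^{i+1}$ is concentrated in original $A$-degrees $0, 1, \ldots, i$, and each of these finitely many original-degree components is a subquotient of $A_n$, which is finitely generated over $A_0$ on the left (resp. right) by the left (resp. right) finiteness of $A$.

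The main obstacle I expect is the bigrading bookkeeping in this last step: one has to keep the original $A$-grading and the $J$-adic grading carefully separated and confirm that the finiteness required by the classical BGS theorem, which pertains to the $J$-adic grading, is actually a consequence of the finiteness assumed in the original $A$-grading. Once this translation is settled, the argument is a straightforward concatenation of the reductions stated above together with the classical result.
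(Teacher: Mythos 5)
Your proof is correct and follows essentially the same route as the paper: reduce to $\Gr_J A$ via Theorems \ref{A sqk implies Gr A Koszul} and \ref{GrM is Koszul implies M is qK}, identify $\Gr_{J(A^{op})}(A^{op})$ with $(\Gr_J A)^{op}$, and invoke the classical left--right symmetry of Koszulity from \cite{BGS1}. Your explicit verification that $\Gr_J A$ is left and right finite and generated in degree $1$ with respect to the $J$-adic grading is a hypothesis check the paper leaves implicit, but the argument is the same.
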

\begin{proof}
By \cite[Proposition 2.2.1]{BGS1}, $\Grj A$ is a classical Koszul ring if and only if so is $(\Grj A)^{op}$. The conclusion follows from $\Grj A^{op}\cong (\Grj A)^{op}$, Theorem \ref{A sqk implies Gr A Koszul} and Theorem \ref{GrM is Koszul implies M is qK}.
\end{proof}

Corollary \ref{A is sqk iff Aop is} is a generalization of \cite[Corollary 4.3]{GM2} and \cite[Proposition 2.2.1]{BGS1}.

Combining the results of this section and the previous section, we have the following theorem, which is one of the main results in this paper.
\begin{theorem} \label{main-resut}
Suppose that $A$ is a left finite $\mathbb{N}$-graded ring generated in degree $1$  with $A_0$ artinian. Then the following are equivalent.
\begin{itemize}
\item[(1)] $A$ is a Koszul ring.
\item[(2)] $E(A)$ is a classical Koszul ring.
\item[(3)] $\Grj A$ is a classical Koszul ring.
\item[(4)] $E(A)$ is generated in degree $1$, and  as graded rings
$E(E(A)\cong \Grj A.$
\end{itemize}
Moreover if $A$ is right finite then the above statements are also equivalent to
\begin{itemize}
\item[(5)] $A^{op}$ is a  Koszul ring.
\end{itemize}

If $A$ is a Koszul ring, then for any left finite bounded below $A$-module $M$, the following are equivalent.
\begin{itemize}
\item[(1)] $M$ is a Koszul $A$-module.
\item[(2)] $E(M)$ is a classical Koszul $E(A)$-module.
\item[(3)] $\Grj M$ is a classical Koszul $\Grj A$-module.
\end{itemize}
If $M$ is finitely generated, then they are also equivalent to
\begin{itemize}
\item[(4)] $\mathcal{E}(M)$ is generated in degree $0$ and $\mathcal{F}(\mathcal{E}(M))\cong \Grj M$ as graded $\Grj A$-modules.
\end{itemize}
\end{theorem}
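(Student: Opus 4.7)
The plan is to organize the proof as a collation of equivalences already established in the preceding sections, since most of the work has been done; the theorem packages the results together under the single hypothesis that $A_0$ is artinian. I would state at the start that artinianness of $A_0$ is used in exactly one direction (the ``$\Grj A$ Koszul implies $A$ Koszul'' direction), where it guarantees degree-wise nilpotence of $J$ via Lemma \ref{locally nilpotent}.

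For the ring assertions, I would first observe that $(1)\Longleftrightarrow (2)$ is Theorem \ref{A sqk implies E(A) Koszul}(1). Next, $(1)\Longrightarrow (3)$ is Theorem \ref{A sqk implies Gr A Koszul}(2), while the converse $(3)\Longrightarrow (1)$ is Theorem \ref{GrM is Koszul implies M is qK}(2); this is where the artinian hypothesis on $A_0$ enters, since the argument in Theorem \ref{GrM is Koszul implies M is qK} requires $J$ to be degree-wise nilpotent. For $(1)\Longrightarrow (4)$, I would appeal to Theorem \ref{A sqk implies E(A) Koszul}(1), which yields that $E(A)$ is classical Koszul; then $E(A)$ is generated in degree $1$ by \cite[Proposition 1.2.3]{BGS1}, and the graded ring isomorphism $E(E(A))\cong \Grj A$ is Theorem \ref{FE(S) cong Grj A}(1). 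Conversely, $(4)\Longrightarrow (1)$ is exactly Theorem \ref{FE(M) cong Grj M imply M sqk}(1). Finally, when $A$ is also right finite, $(1)\Longleftrightarrow (5)$ is Corollary \ref{A is sqk iff Aop is}.

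For the module assertions, operating under the hypothesis that $A$ is Koszul, I would argue $(1)\Longleftrightarrow (2)$ by Theorem \ref{A sqk implies E(A) Koszul}(2), $(1)\Longrightarrow (3)$ by Theorem \ref{A sqk implies Gr A Koszul}(1), and $(3)\Longrightarrow (1)$ by Theorem \ref{GrM is Koszul implies M is qK}(1) (again using artinianness of $A_0$). If in addition $M$ is finitely generated, then $(1)\Longrightarrow (4)$ follows from Theorem \ref{FE(S) cong Grj A}(2): the $E(A)$-module $\mathcal{E}(M)$ is classical Koszul and hence generated in degree $0$, while $\mathcal{F}\mathcal{E}(M)\cong \Grj M$ as graded $\Grj A$-modules. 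The reverse implication $(4)\Longrightarrow (1)$ is Theorem \ref{FE(M) cong Grj M imply M sqk}(2).

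There is no real obstacle to this proof beyond bookkeeping; the structural content is already spread across Sections 3--5. The delicate point worth emphasizing in the writeup is the role of the artinian hypothesis: only the implications $\Grj A\text{ Koszul}\Longrightarrow A\text{ Koszul}$ and $\Grj M\text{ Koszul}\Longrightarrow M\text{ Koszul}$ genuinely require it, because they invoke the degree-wise nilpotence of $J$ coming from Lemma \ref{locally nilpotent} to pass from filtration-quotient statements back to statements about the original syzygies. All the other implications hold in the broader ``$A_0$ noetherian semiperfect'' setting, so I would phrase the proof so that it is transparent which equivalences survive if one weakens the hypothesis.
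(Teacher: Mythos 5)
Your proposal is correct and follows exactly the route the paper intends: the paper offers no separate argument for this theorem beyond the remark that it is obtained ``combining the results of this section and the previous section,'' and your collation of Theorems \ref{A sqk implies E(A) Koszul}, \ref{A sqk implies Gr A Koszul}, \ref{GrM is Koszul implies M is qK}, \ref{FE(S) cong Grj A}, \ref{FE(M) cong Grj M imply M sqk} and Corollary \ref{A is sqk iff Aop is} is the correct assembly. Your observation about where the artinian hypothesis is genuinely needed (degree-wise nilpotence of $J$ in the $\Grj$-to-$A$ directions, and hence also in the passage to $A^{op}$ via Corollary \ref{A is sqk iff Aop is}) is accurate and a worthwhile addition.
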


This theorem is a generalization of \cite[Theorems 2.5, 4.3]{MZ}.

Now we can give an example of quasi-Koszul ring but not Koszul.
\begin{example} \label{qk-but-not-sqk}
Let $R=k[x, y]/(x^2 + y^3, xy)$, which is a noetherian local ring with
$J=J(R)=(\bar{x}, \bar{y})$. A computation
due to Sj?din \cite[Theorem 5]{Sj} shows
$\Ext_R^\bullet(R/J, R/J) = T\langle u, v \rangle/(v^2, u^2v-vu^2)$, where $T\langle u, v \rangle$ is the free algebra in variables $u, v$ over $R/J$, and $|u| = 1 = |v|$.
 In particular $\Ext_R^\bullet(R/J, R/J)$ is generated
by $\Ext_R^1(R/J, R/J)$. Hence $R$ is quasi-Koszul. However, $R$ is not a Koszul ring by Theorem \ref{main-resut} since the associated graded algebra is
$k[x, y]/(x^2, xy, y^4)$, which is not classically Koszul.
\end{example}

\subsection{Questions in [CPS]}\label{questions-in-CPS}
In the representation theory of finite dimensional algebras, $\Grj A$, the associated graded algebra of $A$ with respect to the $J$-adic filtration, plays an important role. It reflects the homological property of $E(A)=\Ext^\bullet(A/J, A/J)$, which is often called the homological dual of $A$. For quasi-hereditary algebras $A$ appeared in Kazhdan-Lusztig theories, we often have $\Grj A \cong E(E(A))$ \cite[Theorem 2.2.1]{CPS}.
 In a series of papers, Cline, Parshall and Scott have been searching for Koszul structures in the quasi-hereditary algebras of interest in modular representation theory of algebraic groups.  The
general question they considered are what good properties of a finite dimensional algebra $A$
imply that the graded algebra $\Grj A$ is classically Koszul.

 The following two questions were proposed in \cite[Section 3]{CPS}.
\begin{itemize}
\item[(1)] Can it be determined if $A$ is classically Koszul entirely from knowledge of $E(A)$?
\item[(2)] Does the classical Koszul property for $\Grj A$ imply the same property for $A$?
\end{itemize}

A counterexample was given to show both of them have negative answers. Here is their example \cite[Example 3.2]{CPS}.

Let $B$ be the basic algebra with quiver
\begin{center}
\begin{tikzcd}
                                 &                                                                  & b \arrow[ld, "\beta", shift left] &                                                \\
c \arrow[r, "\zeta", shift left] & a' \arrow[ru, "\delta", shift left] \arrow[l, "\xi", shift left] &                                   & a \arrow[ld, "\gamma"] \arrow[lu, "\epsilon"'] \\
                                 &                                                                  & b' \arrow[lu, "\alpha"]           &
\end{tikzcd}
\end{center}
and defining relations
$$\xi\alpha=\xi\zeta=\delta\zeta=\alpha\gamma-\zeta\xi\beta\epsilon=\beta\delta-\zeta\xi=0.$$

Let $J_B$ be the ideal generated by all arrows. By setting the suitable weights of arrows, $B$ can be viewed as a locally finite graded algebra generated by $B_1$ over $B_0$. For example, let $\alpha$ and $\epsilon$ be of weight one and all others be of weight zero. Then,  $J_B$ is the graded Jacobson radical of $B$ in any case.

It was shown in \cite{CPS} that
$$\Grj B\cong 
E(E(B)),E(\Grj B) = \gExt_{\Grj B}^\bullet(B/J_B,B/J_B)$$
and they are classically Koszul, but $B \ncong \Grj B$. Thus $B$ is not a classical Koszul algebra.
In fact, it follows from Theorem \ref{main-resut} that $B$ is Koszul in our sense.


In general, for any finite dimensional algebra $A$, it follows from Theorem \ref{main-resut} that
\begin{itemize}
\item[(1)] if $\Grj A$ is classically Koszul,  then $A$ is Koszul viewed as a graded ring concentrated on degree $0$ in our sense.
\item[(2)] $A$ is Koszul in our sense if and only if that $E(A)$ is generated in degree $1$ and  $ E(E(A)) \cong \Grj A$ as graded rings.
\end{itemize}

Thus, the two questions in \cite{CPS} are answered in some sense.

\section{Generalized AS regular Koszul algebras}
In this section we assume that $A$ is a locally finite $\mathbb{N}$-graded $k$-algebra generated in degree $1$, where $k$ is a field.
If we assume further that $A$ is a Koszul algebra, then it is proved that $A$ is generalized AS regular if and only if so is $\Grj A$ (Theorem \ref{A is generalized AS regular iff GrA is}).
It is well known that, for any connected graded classical Koszul algebra $A$ with finite global dimension, $A$ is an AS regular algebra if and only if $E(A)=\gExt^\bullet_A(k,k)$ is a Frobenius algebra (see \cite[Proposition 5.10]{Sm}). This result was generalized to graded quiver algebra in \cite{MV1} (see also \cite{MV2}). The proof in \cite{MV1} works for classical Koszul algebras as stated in Proposition \ref{Classical case about the relation of Yoneda and generalized AS regular}.
We prove that this result holds for locally finite $\mathbb{N}$-graded Koszul algebras of finite global dimension by combining Theorem \ref{A is generalized AS regular iff GrA is} and Proposition \ref{Classical case about the relation of Yoneda and generalized AS regular}.

\subsection{Generalized AS regular algebras}
Here is the definition of generalized AS regular algebras \cite[Definition 1.4 and Theoren 5.4]{RR1} (see also  \cite[Definition 3.15]{MM} and \cite{MV2, MS}).

\begin{definition}\label{definition of GAS regluar}
A locally finite $\mathbb{N}$-graded algebra $A$ is called {\it generalized Artin-Schelter regular} (for short, AS regular) of dimension $d$ if the following conditions hold.
\begin{itemize}
\item[(1)] $A$ has global dimension $d < \infty$.
\item[(2)] For every graded simple $A$-module $M$, $\gExt^i_A(M,A)=0$ if $i\neq d$.
\item[(3)] $\gExt^d_A(-,A)$ induces a bijection between the isomorphism classes of graded simple $A$-modules and graded simple $A^{op}$-modules.
\end{itemize}
\end{definition}

In fact, by \cite[Theorem 1.5]{RR1}, $A$ is twisted Calabi-Yau (or equivalently, $A$ has graded Van den Bergh duality) if and only if $A$ is generalized AS-regular and $S=A/J$ is a separated $k$-algebra.

For any graded ring $A$, let $A$-$\gr$ be the category of finitely generated graded $A$-modules.

\begin{theorem}\cite[Theorem 5.2]{RR1}\label{equivalent definitions of AS regular}
Suppose that $A$ is a locally finite $\mathbb{N}$-graded algebra of finite global dimension $d$. If, for every graded simple $A$-module $M$, $\gExt^i_A(M,A)=0$ for $i\neq d$, then the following are equivalent:
\begin{itemize}
\item[(1)] $\gExt^d_A(-,A)$ gives a contravariant equivalence from $A_0$-$\gr$ to $A_0^{op}$-$\gr$.
\item[(2)] $\gExt^d_A(-,A)$ gives a contravariant equivalence from $S$-$\gr$ to $S^{op}$-$\gr$.
\item[(3)] $\gExt^d_A(S,A)\cong V$ as right $S$-modules, for some graded invertible $S$-bimodule $V$.
\item[(4)] $\gExt^d_A(S,A)\cong V$ as $S$-bimodules, for some graded invertible $S$-bimodule $V$.
\item[(5)] $\gExt^d_A(A_0,A)\cong \gHom_k(A_0, k)\otimes_{A_0} W$ as right $A_0$-modules, for some graded invertible $A_0$-bimodule $W$.
\item[(6)] $\gExt^d_A(A_0,A)\cong \gHom_k(A_0, k)\otimes_{A_0} W$ as $A_0$-bimodules, for some graded invertible $A_0$-bimodule $W$.
\item[(7)] $A$ is a generalized AS regular algebra.
\end{itemize}
\end{theorem}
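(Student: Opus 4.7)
The plan is to establish the cycle $(7) \Rightarrow (1) \Rightarrow (2) \Rightarrow (3) \Leftrightarrow (4) \Rightarrow (7)$, and then to verify that $(5)$ and $(6)$ are $A_0$-level avatars of $(3)$ and $(4)$. The key technical ingredient is the vanishing hypothesis $\gExt^i_A(M,A)=0$ for $i\neq d$ on graded simples $M$: together with the semiperfect artinian nature of $A_0$ and the locally finite grading, I expect this to force $\gExt^d_A(-,A)$ to be exact on all of $A_0$-$\gr$.

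For $(7) \Rightarrow (1)$, I would use that every object of $A_0$-$\gr$ admits a composition series whose factors are shifts of graded simples, so the vanishing hypothesis propagates to give exactness of $\gExt^d_A(-,A)$ on $A_0$-$\gr$; combined with the bijection on simples coming from generalized AS regularity and the resulting biduality, this yields a contravariant equivalence. Restricting to the Serre subcategory $S$-$\gr \hookrightarrow A_0$-$\gr$ of objects annihilated by $J(A_0)$ gives $(2)$, and evaluation at ${}_SS$ gives $(3)$. The step $(3) \Leftrightarrow (4)$ should be formal: the canonical left $S$-action on $\gExt^d_A(S,A)$ coming from $\gHom_A(S,S)\cong S^{\mathrm{op}}$ is automatically compatible with the given right structure, so invertibility as a right $S$-module upgrades to invertibility as an $S$-bimodule. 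Finally, $(4) \Rightarrow (7)$ follows because tensoring with the invertible bimodule $V$ permutes isomorphism classes of graded simple $S$-modules, and via inflation along $A \twoheadrightarrow S$ this provides the bijection of graded simple $A$-modules required by the definition of generalized AS regular.

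For the equivalence $(3), (4) \Leftrightarrow (5), (6)$, the connection should come from the short exact sequence $0 \to J(A_0) \to A_0 \to S \to 0$ of $A$-modules and the role of $\gHom_k(A_0,k)$ as the canonical dualizing $A_0$-bimodule encoding the Nakayama-type data at the $A_0$ level. An invertible $A_0$-bimodule $W$ should descend to an invertible $S$-bimodule $V = S\otimes_{A_0} W\otimes_{A_0} S$, and in the semiperfect setting this descent is an equivalence between the groupoids of graded invertible bimodules, translating the statements on the two levels into one another.

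The hard part will be rigorously propagating $\gExt^{i}_A(-,A)=0$ for $i \neq d$ from graded simples to all of $A_0$-$\gr$ in the locally finite graded setting, where one must combine a dévissage along composition factors with the local finiteness of $A$ to handle potentially unbounded filtrations. A secondary subtlety is carefully tracking the compatibility of the several bimodule structures in play --- the various left and right actions of $S$, of $A_0$, and of $A$ itself --- particularly in the descent step between the $A_0$ and $S$ pictures, where one needs to verify that right-module invertibility and bimodule invertibility transfer correctly under $- \otimes_{A_0} S$ and its lift.
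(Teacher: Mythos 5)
First, note that the paper does not prove this statement at all: it is imported verbatim as \cite[Theorem 5.2]{RR1}, so there is no internal proof to compare against, and any evaluation of your proposal has to be against the argument in Reyes--Rogalski.

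Measured that way, your outline has the right overall shape (a cycle through $(7)\Rightarrow(1)\Rightarrow(2)\Rightarrow(3)\Rightarrow(4)\Rightarrow(7)$ with $(5),(6)$ as $A_0$-level translations), but it has a genuine gap exactly at the step you dismiss as ``formal,'' namely $(3)\Rightarrow(4)$, and more generally at every point where you invoke ``the resulting biduality'' without producing it. Knowing that $\gExt^d_A(S,A)$ is isomorphic to an invertible bimodule $V$ \emph{as a right $S$-module} says nothing a priori about the left $S$-action: over a semisimple ring $S$ a bimodule can be right-module isomorphic to $S$ while the left action is degenerate (e.g.\ factors through a proper quotient), and then it is not invertible. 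The actual engine in \cite{RR1} is the convergent double-Ext spectral sequence $\gExt^p_{A^{op}}(\gExt^{-q}_A(M,A),A)\Rightarrow M$ for finite-length graded $M$ (valid because $\gldim A=d<\infty$ and minimal resolutions are by finitely generated projectives); under the vanishing hypothesis it collapses to a natural isomorphism $\gExt^d_{A^{op}}(\gExt^d_A(M,A),A)\cong M$. This biduality is what simultaneously (i) upgrades the right-module statement in $(3)$ to the bimodule statement in $(4)$, by forcing the left action on $\gExt^d_A(S_i,A)$ to be faithful on the correct block, (ii) supplies the quasi-inverse needed for the equivalences in $(1)$ and $(2)$, and (iii) identifies the functor on $A_0$-$\gr$ with $\gHom_k(-,k)\otimes_{A_0}W$ for $(5),(6)$. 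Without stating and proving this spectral-sequence biduality, the cycle does not close. (A minor further point: your worry about ``potentially unbounded filtrations'' in propagating the vanishing from simples to $A_0$-$\gr$ is unfounded --- objects of $A_0$-$\gr$ have finite length since $A_0$ is finite dimensional --- so the d\'evissage there is the easy part.)
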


\subsection{Characterization of generalized AS regular Koszul algebras}

Suppose that $M$ and $N$ are two graded $A$-modules, endowed with the $J$-adic filtration. Then $\gHom_A(M,N)$ is a filtered abelian group endowed with the induced filtration on Hom.  In fact, in this case,
$$F_n\gHom_A(M,N) = \{f \in \gHom_A(M,N) \mid f(M) \in J^nN\}.$$
This filtration is always exhaustive. It follows from the degree-wise nilpotency of $J$ that the filtration is separated if $N$ is bounded below.
There is a natural map
$$\varphi:\Gr \gHom_A(M,N)\to \gHom_{\Grj A}(\Grj M,\Grj N)$$
given by  $\varphi(\bar{f})(\bar{x})=f(x)+J^{n+i+1}N$, where $f\in F_n\,\gHom_A(M,N)$ and  $x\in J^iM$.

If $f\in \gHom_A(M,N),g\in \gHom_A(N,K)$, then $\varphi(\overline{gf})=\varphi(\bar{g})\varphi(\bar{f})$.

\begin{lemma}\label{varphi is isomorphism}
In general, $\varphi$ is injective. If  $M$ is a graded projective $A$-module, then $\varphi$ is an isomorphism.
\end{lemma}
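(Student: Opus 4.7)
The plan is to prove injectivity directly from the definition (which needs no projectivity), and then establish surjectivity in the projective case by reducing, via additivity, to $M = A(l)$.

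For injectivity, suppose $f \in F_n\gHom_A(M,N)$ satisfies $\varphi(\bar f)=0$. Specialising the defining formula $\varphi(\bar f)(\bar x)=f(x)+J^{n+i+1}N$ to $i=0$ and an arbitrary $x \in M$, one obtains $f(M)\subseteq J^{n+1}N$. Hence $f \in F_{n+1}\gHom_A(M,N)$, i.e., $\bar f=0$ in $\Gr \gHom_A(M,N)$. So $\varphi$ is injective for any graded $A$-module $M$.

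For the isomorphism when $M$ is projective, the decisive case is $M = A(l)$. Evaluation at $1$ gives a natural identification $\gHom_A(A(l),N)\cong N(-l)$, under which the filtration $F_n\gHom_A(A(l),N)$ corresponds to $J^n N$ shifted; consequently $\Gr \gHom_A(A(l),N)\cong (\Grj N)(-l)$. On the other side, $\Grj(A(l))=(\Grj A)(l)$, so $\gHom_{\Grj A}(\Grj (A(l)),\Grj N)\cong (\Grj N)(-l)$ as well. Unwinding $\varphi$ degree by degree, one checks that under these identifications it is the identity of $(\Grj N)(-l)$, hence an isomorphism. This is a short direct computation and I would not expect any difficulty.

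Finally, I would pass from $M = A(l)$ to an arbitrary graded projective $M$ by writing $M$ as a direct summand of a graded free module $F=\bigoplus_\alpha A(l_\alpha)$. The functor $\Grj$ commutes with direct sums, and both assignments $M \mapsto \Gr \gHom_A(M,N)$ and $M \mapsto \gHom_{\Grj A}(\Grj M, \Grj N)$ are additive; since $\varphi$ is natural in $M$, the isomorphism on each $A(l_\alpha)$ assembles to an isomorphism on $F$ and restricts to one on the summand $M$. The main technical point here is the compatibility of the $J$-adic filtration on $\gHom_A(-,N)$ with direct sums in the first argument when the indexing set is infinite; this compatibility holds in the setting of this paper because all projective modules of interest are left finite and bounded below (cf.\ Proposition \ref{minimal projective resolution}), which guarantees that the filtration on the Hom decomposes correctly.
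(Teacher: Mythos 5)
Your argument is correct, but it takes a genuinely different (more self-contained) route from the paper. The paper's entire proof consists of the observation that a graded projective module is $J$-adic filtered projective, followed by a citation of \cite[Lemma I.6.9]{LO}, which is exactly the statement that the natural map $\Gr \gHom \to \gHom_{\Gr}$ is injective in general and bijective for filtered projective modules. What you have written out is essentially the standard proof of that cited lemma: injectivity straight from the definitions (your specialization to $i=0$ is precisely the right observation, since $F_{n+1}\gHom_A(M,N)$ is by definition the set of $f$ with $f(M)\subseteq J^{n+1}N$, so $\varphi(\bar f)=0$ forces $\bar f=0$), and surjectivity by reduction to $M=A(l)$, where evaluation at $1$ identifies both sides with $(\Grj N)(-l)$ and $\varphi$ becomes the identity. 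The trade-off is transparency versus brevity: your version makes the mechanism visible and keeps the paper independent of the filtered-ring literature; the paper's version outsources the computation.

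One caveat in your final step deserves attention. You pass from $A(l)$ to a general graded projective via ``additivity,'' but $\gHom_A(-,N)$ converts an \emph{infinite} direct sum in the first variable into a product, and the interchange of the internal-degree decomposition $\oplus_d$, the product $\prod_\alpha$, and the functor $\Gr$ is not automatic; your appeal to left-finiteness and boundedness below points at the relevant hypotheses but does not actually carry out this exchange (and the lemma as stated assumes neither of these on $M$). This is harmless for the paper's purposes: the lemma is only ever applied to the terms $P_i$ of a minimal graded projective resolution of a finitely generated module, which are finitely generated by Proposition \ref{minimal is fg}, so a finite direct sum suffices and there your argument is complete. For the statement in full generality you would need either to verify the infinite-product bookkeeping explicitly or to note that the retraction argument only requires the free module $F$ to be as small as $M$ allows.
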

\begin{proof}
It follows from the graded projetivity of ${}_AM$ that $M$ is $J$-adic filtered projective. The conclusion follows from \cite[Lemma I.6.9]{LO}.
\end{proof}

\begin{theorem}\label{A is generalized AS regular iff GrA is}
Suppose that $A$ is a locally finite Koszul algebra. Then $A$ is a generalized AS regular algebra of dimension $d$ if and only if $\Grj A$ is a generalized AS regular algebra of dimension $d$.
\end{theorem}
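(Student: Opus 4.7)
The plan is to compare generalized AS regularity of $A$ and of $\Grj A$ by identifying, via the $J$-adic filtration, the complex computing $\gExt_A^\bullet(S,A)$ with a filtered complex whose associated graded computes $\gExt_{\Grj A}^\bullet(S,\Grj A)$, and then to verify each clause of Theorem \ref{equivalent definitions of AS regular} on both sides, namely: global dimension equal to $d$; vanishing $\gExt^i(S,-)=0$ for $i\neq d$; and $\gExt^d(S,-)$ isomorphic to an invertible $S$-bimodule.

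First, the global dimensions match: since $A_0$ is artinian (as $A$ is locally finite), $\gldim A$ equals the projective dimension of $S$ over $A$; since $A$ is Koszul the module $S$ is Koszul over $A$ with $\Grj S=S$ (because $JS=0$), so Corollary \ref{minimal projecitve resolution of filtration module} gives that this equals the projective dimension of $S$ over $\Grj A$, which equals $\gldim\Grj A$. Now choose a minimal graded projective resolution $P_\bullet\to S\to 0$ over $A$; by the same corollary, $\Grj P_\bullet\to S\to 0$ is a minimal graded projective resolution of $S$ over $\Grj A$, both bounded of length $d$. Endow $C^i:=\gHom_A(P_i,A)$ with the decreasing $J$-adic filtration $F^pC^i=\{f:f(P_i)\subseteq J^pA\}$; the differentials (precomposition with the $\partial_i$) preserve $F^\bullet$, and Lemma \ref{varphi is isomorphism} yields $\Gr C^\bullet\cong \gHom_{\Grj A}(\Grj P_\bullet,\Grj A)=:D^\bullet$. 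Since $A_0$ is artinian, $J$ is degree-wise nilpotent by Lemma \ref{locally nilpotent}, so $F^\bullet$ is finite in each external grading degree and the spectral sequence of the filtered complex converges strongly to
$$E_1 \;=\; \gExt_{\Grj A}^\bullet(S,\Grj A)\;\Longrightarrow\;\Gr\gExt_A^\bullet(S,A).$$

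The ``if'' direction is then clean: when $\Grj A$ is generalized AS regular of dimension $d$, the $E_1$-page is concentrated in cohomological degree $d$, the spectral sequence degenerates at $E_1$, and we obtain $\Gr\gExt_A^i(S,A)=0$ for $i\neq d$ (so $\gExt_A^i(S,A)=0$ by separatedness of $F^\bullet$) together with $\Gr\gExt_A^d(S,A)\cong V'$ for the invertible $S$-bimodule $V'$ supplied by Theorem \ref{equivalent definitions of AS regular}(4) applied to $\Grj A$. Since $V'$ is already an $S$-bimodule its $J$-adic filtration is concentrated in degree $0$, so $\gExt_A^d(S,A)\cong V'$ as $S$-bimodules and condition (4) of Theorem \ref{equivalent definitions of AS regular} holds for $A$. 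For the ``only if'' direction, the same filtered-complex picture combined with matching alternating-sum graded Hilbert series of $C^\bullet$ and $D^\bullet$ (which agree since $\Gr$ preserves graded dimensions) together with the vanishing of $\gExt_A^i(S,A)$ for $i\neq d$ should force the analogous vanishing for $\Grj A$, and the invertibility of $\gExt_{\Grj A}^d(S,\Grj A)$ transfers by the parallel bimodule argument.

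The main obstacle is precisely this converse direction: even with matching Euler characteristics, ruling out nontrivial higher differentials $d_r$ ($r\geq 2$) in the spectral sequence needs care. The cleanest remedy I expect is to realize both $\gExt_A^\bullet(S,A)$ and $\gExt_{\Grj A}^\bullet(S,\Grj A)$ as graded modules over the common Yoneda Ext ring $E(A)=E(\Grj A)$ (available from Theorem \ref{FE(S) cong Grj A} together with the classical Koszulity of $\Grj A$), and to exploit the Koszul-duality equivalence of bounded derived categories so that the vanishing becomes transparent on both sides simultaneously.
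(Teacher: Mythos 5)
Your spectral-sequence setup and your treatment of the direction ``$\Grj A$ generalized AS regular $\Rightarrow$ $A$ generalized AS regular'' essentially coincide with the paper's argument: same filtration $F_p\gHom_A(P_i,A)$, same use of Lemma \ref{varphi is isomorphism} to identify the associated graded complex with $\gHom_{\Grj A}(\Grj P_\bullet,\Grj A)$, same collapse of the $E_1$-page at $(p,q)=(-d,2d)$, and the convergence issue you wave at (the filtration is exhaustive and separated but not bounded below) is exactly the point the paper checks degree by degree using Lemma \ref{locally nilpotent}; your appeal to degree-wise finiteness is the right justification, just compressed.

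The other direction, however, is a genuine gap, and you have correctly diagnosed it yourself: the spectral sequence runs from $E_1=\gExt_{\Grj A}^\bullet(S,\Grj A)$ \emph{to} $\Gr\gExt_A^\bullet(S,A)$, so knowing the abutment is concentrated in degree $d$ does not force the $E_1$-page to be concentrated there --- nonzero $E_1$-terms in other degrees could be killed by higher differentials, and an Euler-characteristic comparison only controls the alternating sum, not individual vanishing. Your proposed remedy (a derived Koszul-duality equivalence over a common Yoneda ring $E(A)=E(\Grj A)$) is not carried out, and even the identification $E(A)\cong E(\Grj A)$ is not something Theorem \ref{FE(S) cong Grj A} hands you directly. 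The paper closes this direction by an entirely different and more elementary route that avoids the spectral sequence: if $A$ is generalized AS regular of dimension $d$, then $0\to P_0^*\to\cdots\to P_d^*\to\gExt_A^d(S,A)\to 0$ is a minimal graded projective resolution of $\gExt_A^d(S,A)$ over $A^{op}$; this module is an invertible $S$-bimodule, hence semisimple as a right module, hence a Koszul $A^{op}$-module because $A^{op}$ is Koszul by Corollary \ref{A is sqk iff Aop is}. One also checks that the Hom-filtration on $P_i^*$ agrees with the $J$-adic filtration of the right module $P_i^*$. Corollary \ref{minimal projecitve resolution of filtration module} then says that applying $\Grj$ to this resolution again yields a minimal graded projective resolution over $(\Grj A)^{op}$, and Lemma \ref{varphi is isomorphism} identifies $\Grj P_i^*$ with $(\Grj P_i)^*$. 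This computes $\gExt_{\Grj A}^i(S,\Grj A)$ on the nose: it vanishes for $i\neq d$ and equals $\Grj\gExt_A^d(S,A)\cong\gExt_A^d(S,A)$ for $i=d$, which is invertible. You should replace your Euler-characteristic/derived-equivalence sketch with an argument of this exactness-transfers-under-$\Grj$ type, which is precisely what the Koszulity of $A^{op}$ buys you.
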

\begin{proof}
By Theorem \ref{A sqk implies Gr A Koszul}, $\Grj A$ is a classical Koszul algebra. It follows from Corollary \ref{minimal projecitve resolution of filtration module}  that $\gldim A=\pdim {}_AS=\pdim {}_{\Grj A}\Grj S=\gldim \Grj A$.

Let $P_\bullet\to S\to 0$ be a minimal graded projective resolution of ${}_AS$.
By Proposition \ref{minimal is fg}, each $P_i$ is a finitely generated $A$-module.

Now suppose that $A$ is generalized AS regular of dimension $d$.
 Let $(-)^*$ be the functor $\gHom_A(-,A)$. Then
$$0\to P_0^*\xrightarrow{d_1^*} P_1^*\to \cdots \xrightarrow{d_d^*} P_d^*\to \gExt_A^d(S,A)\to 0$$
is a minimal graded projective resolution of $\gExt^d_A(S,A)$ as an $A^{op}$-module.

By definition $F_nP_i^*=\{f \in P_i^* \mid f(P_i)\subseteq J^n\}$. In fact, it is the same as the $J$-adic filtration on the right $A$-module $P_i^*$, that is, $F_nP_i^*=P_i^*\, J^n$.
If $f \in P_i^*J^n$, then $f=\sum g_jx_j$ for some $g_j\in P_i^*$ and $x_j\in J^n$, and $f(P_i)=(\sum g_jx_j)(P_i)=\sum g_j(P_i)x_j\subseteq J^n$.
On the other hand, suppose $f \in P_i^*$ such that $f(P_i)\subseteq J^n$. Then $f=\sum_{\alpha} x_{\alpha}^* f(x_{\alpha}) \in P_i^*\, J^n$, where $\{x_{\alpha}, x_{\alpha}^*\}$ is a dual basis of the finitely generated graded projective module $P_i$.


Since $A$ is a locally finite Koszul algebra, it follows from Corollary \ref{A is sqk iff Aop is} that $A^{op}$ is a Koszul algebra. Thus $\gExt_A^d(S,A)$ is a Koszul $A^{op}$-module. By Corollary \ref{minimal projecitve resolution of filtration module},
$$0\to \Grj P_0^*[-d]\to \cdots \to\Grj P_d^*\to \Grj\gExt_A^d(S,A)\to 0$$
is a minimal graded projective resolution of $\Grj\gExt_A^d(S,A)$ as a $(\Gr_J A)^{op}$-module.
It follows from Lemma \ref{varphi is isomorphism} that
\begin{align*}
\Grj P_i^*&=\oplus (P_i^*J^n/P_i^*J^{n+1}) =\oplus (F_nP_i^*/F_{n+1}P_i^*)\\
          &= \Gr_J (\gHom_A(P_i, A)) \cong \gHom_{\Grj A}(\Grj P_i,\Grj A),
\end{align*}
and the following is an isomorphism of complexes,
\begin{center}
\begin{tikzcd}[column sep=tiny]
0 \arrow[r] & \Grj P_0^*[-d] \arrow[r] \arrow[d] & \Grj P_1^*[-d+1] \arrow[r] \arrow[d] & \cdots \arrow[r] & \Grj P_d^* \arrow[r] \arrow[d]  & 0  \\
0 \arrow[r]          & (\Grj P_0)^*[-d] \arrow[r]         & (\Grj P_1)^*[-d+1] \arrow[r]         & \cdots \arrow[r] & (\Grj P_d)^* \arrow[r]           & 0 {}
\end{tikzcd}
\end{center}
where  $(\Grj P_i)^*=\gHom_{\Grj A}(\Grj P_i,\Grj A)$.

Again by Corollary \ref{minimal projecitve resolution of filtration module} and $S=\Grj S$,
$$\gExt_{\Grj A}^i(S,\Grj A)\left\{
\begin{aligned}
&=0,&i\neq d\\
&\cong \Grj\gExt_A^d(S,A)[d], &i=d
\end{aligned}
\right.
$$
 as right $\Grj A$-modules.
By Theorem \ref{equivalent definitions of AS regular}, $J\gExt_A^d(S,A)=\gExt_A^d(S,A)J$ and $\Grj\gExt_A^d(S,A)\cong \gExt_A^d(S,A)$ is invertible as $S$-bimodule. By Theorem \ref{equivalent definitions of AS regular} again, $\Grj A$ is a generalized AS regular algebra.

Conversely, suppose  $\Grj A$ is a generalized AS regular algebra of dimension $d$.
Since $\Grj A$ is a classical Koszul algebra, $S$ as a $\Grj A$-module  has a linear projective resolution:
$$0\to Q_d\to Q_{d-1}\to \cdots \to Q_0\to S\to 0.$$
Then
$$0\to Q_0^*\to \cdots \to Q_d^*\to \gExt_{\Grj A}^d(S,\Grj A)\to 0$$
is a projective resolution of $\gExt_{\Grj A}^d(S,\Grj A)$. Since $Q_d^*$ is generated in degree $-d$ and $\gExt_{\Grj A}^d(S,\Grj A)$ is semisimple, $\gExt_{\Grj A}^d(S,\Grj A)$ is concentrated in degree $-d$. Therefore,
$\gExt_{\Grj A}^d(S,\Grj A)_i=0,\forall i\neq -d$ and $\gExt_{\Grj A}^d(S,\Grj A)_{-d}$ is an invertible $S$-bimodule.

For convenience, let us denote the complex $\gHom_A(P_{\bullet}, A)$ by $C^\bullet$:
$$0\to \gHom_A(P_0,A)\xrightarrow{d_1^*} \gHom_A(P_1,A)\xrightarrow{d_2^*}\cdots \xrightarrow{d_d^*} \gHom_A(P_d,A)\to 0.$$
Consider the $i$-th shift of the $J$-adic filtration on $P_i^*$. For convenience, we still use $F_n$ to denote the new filtration, that is,
$$F_n\gHom_A(P_i,A):=\{f \mid f(P_i)\subseteq J^{n+i}\}.$$
For any $f\in F_n\gHom_A(P_i,A)$,
$$d_{i+1}^*(f)(P_{i+1})=f(d_{i+1}(P_{i+1}))=f(\Ker d_i)\subseteq f(JP_i)\subseteq J^{n+i+1},$$
which means $d_{i+1}^*(f)\in F_n\gHom_A(P_{i+1},A)$. Therefore, $C^\bullet$ is a filtered cochain complex.

Now we look into the spectral sequence of the filtered complex $FC^\bullet$. We will use the same terminology and notation as \cite[Section 5.4, 5.5]{W}, but  the cohomological version.

The objects in $0$-page are
\begin{align*}
E_0^{pq}&=\dfrac{F_p\gHom_A(P_{p+q},A)}{F_{p+1}\gHom_A(P_{p+q},A)}\\
&\cong \big(\Gr \gHom_A(P_{p+q},A)[p+q]\big)_p\\
&\cong  \big(\gHom_{\Grj A}(\Grj P_{p+q},\Grj A)[p+q]\big)_p\\
&=\gHom_{\Grj A}((\Grj P_{p+q})[-p-q],\Grj A)_p.
\end{align*}
The objects  in $1$-page are
$$E_1^{pq}=\gExt_{\Grj A}^{p+q}(S,\Grj A)_p\left\{
\begin{aligned}
&=0,&(p,q)\neq (-d,2d)\\
&\cong \gExt_{\Grj A}^d(S,\Grj A)_{-d}, &(p,q)= (-d,2d)
\end{aligned}
\right.
$$ as right $\Grj A$-modules.
So, the spectral sequence $\{E_n^{pq}\}$ is bounded, and
$$E_\infty^{pq}\left\{
\begin{aligned}
&=0,&(p,q)\neq (-d,2d)\\
&\cong \gExt_{\Grj A}^d(S,\Grj A)_{-d}, &(p,q)= (-d,2d)
\end{aligned}
\right.
$$ as right $\Grj A$-modules.

Although the filtration is not bounded below, and the classical convergence theorem \cite[Theorem 5.5.1]{W} does not apply directly, this spectral sequence is still convergent to the cohomological groups of $C^\bullet$, which we check in the following.

Let us recall the notations first. Let $\eta_p:F_pC^{p+q}\to F_pC^{p+q}/F_{p+1}C^{p+q}$ be the natural projection and $\partial$ be the differentials of $C^\bullet$.
\begin{align*}
&A_r^{pq}=\{f\in F_pC^{p+q}\mid \partial(f)\in F_{p+r}C^{p+q+1}\}\\
&Z_r^{pq}=\eta_p(A_r^{pq})=\dfrac{A_r^{pq}+F_{p+1}C^{p+q}}{F_{p+1}C^{p+q}}\\
&B_r^{pq}=\eta_p(\partial(A_{r-1}^{p-r+1,q+r-2}))=\dfrac{\partial(A_{r-1}^{p-r+1,q+r-2})+F_{p+1}C^{p+q}}{F_{p+1}C^{p+q}}\\
&Z_\infty^{pq}=\cap_rZ_r^{pq}=\dfrac{\cap_r(A_r^{pq}+F_{p+1}C^{p+q})}{F_{p+1}C^{p+q}}\\
&B_\infty^{pq}=\cup_r B_r^{pq}=\dfrac{\partial(C^{p+q-1})\cap F_pC^{p+q}+F_{p+1}C^{p+q}}{F_{p+1}C^{p+q}} \\
& E_\infty^{pq}=Z_\infty^{pq}/B_\infty^{pq}
\end{align*}

Let $z_\infty^{pq}=\dfrac{\Ker \partial\cap F_pC^{p+q}+F_{p+1}C^{p+q}}{F_{p+1}C^{p+q}	}\subseteq Z_\infty^{pq}$ and $e_\infty^{pq}=z_\infty^{pq}/B_\infty^{pq}$.
In general $e_\infty^{pq} \subseteq E_\infty^{pq}$.

The cohomological groups $\{H^\bullet\}$ of $C^\bullet$ have the standard filtration induced by $FC^\bullet$ given by
$$F_pH^{p+q}:=\dfrac{\Ker\partial\cap F_pC^{p+q}+\partial(C^{p+q-1})}{\partial(C^{p+q-1})}.$$
Then
\begin{align*}
\dfrac{F_pH^{p+q}}{F_{p+1}H^{p+q}}&\cong\dfrac{\Ker\partial\cap F_pC^{p+q}+\partial(C^{p+q-1})}{\Ker\partial\cap F_{p+1}C^{p+q}+\partial(C^{p+q-1})}\\
&\cong \dfrac{\Ker\partial\cap F_pC^{p+q}}{\Ker\partial\cap F_{p+1}C^{p+q}+\partial(C^{p+q-1})\cap F_pC^{p+q}}.
\end{align*}
Hence
\begin{align*}
 e_\infty^{pq}&\cong \dfrac{\Ker \partial\cap F_pC^{p+q}+F_{p+1}C^{p+q}}{\partial(C^{p+q-1})\cap F_pC^{p+q}+F_{p+1}C^{p+q}}\\
&\cong \dfrac{\Ker\partial\cap F_pC^{p+q}}{\Ker\partial\cap F_{p+1}C^{p+q}+\partial(C^{p+q-1})\cap F_pC^{p+q}}\\
&\cong \dfrac{F_pH^{p+q}}{F_{p+1}H^{p+q}}.
\end{align*}

Clearly, $\cap_rA_r^{pq}+F_{p+1}C^{p+q}\subseteq\cap_r(A_r^{pq}+F_{p+1}C^{p+q})$.

Now, in our case, $F_pC^n=C^nJ^{p+n}$ is a bounded below graded $A^{op}$-module. So, $A_r^{pq}=\partial^{-1}(F_{p+r}C^{p+q+1})\cap F_pC^{p+q}$ is also a bounded below graded $A^{op}$-module.
For a fixed degree $i$, by Lemma \ref{locally nilpotent}, there is an integer $r_i$ such that for all $r\geqslant r_i$, $(F_{p+r}C^{p+q+1})_i=0$. Thus $(A_r^{pq})_i=(\Ker \partial)_i\cap (F_pC^{p+q})_i$  for all $r\geqslant r_i$. Therefore
\begin{align*}
&\mathop{\cap}\limits_{r}(A_r^{pq}+F_{p+1}C^{p+q})_i=\mathop{\cap}\limits_{r}((A_r^{pq})_i+(F_{p+1}C^{p+q})_i)\\
=&\mathop{\cap}\limits_{r=0}^{r_i}((A_r^{pq})_i+(F_{p+1}C^{p+q})_i=(A_{r_i}^{pq})_i+(F_{p+1}C^{p+q})_i \\
=&(\mathop{\cap}\limits_{r} A_r^{pq})_i  + (F_{p+1}C^{p+q})_i=(\mathop{\cap}\limits_{r}A_r^{pq}+F_{p+1}C^{p+q})_i.
\end{align*}
 Then $\mathop{\cap}\limits_{r}(A_r^{pq}+F_{p+1}C^{p+q})=\mathop{\cap}\limits_{r}A_r^{pq}+F_{p+1}C^{p+q}$, as $i$ is arbitrary. So
\begin{displaymath}
Z_\infty^{pq}=\dfrac{\cap_rA_r^{pq}+F_{p+1}C^{p+q}}{F_{p+1}C^{p+q}}.
\end{displaymath}

 If $f\in \cap_rA_r^{pq}$, then for any $r$,
$$\partial(f)=f\circ d_{p+q+1}\in F_{p+r}\gHom_A(P_{p+q+1},A)$$ and
$$f\circ d_{p+q+1}(P_{p+q+1})\subseteq J^{p+r+p+q+1}.$$
By the arbitrariness of $r$ and Lemma  \ref{locally nilpotent}, $f\circ d_{p+q+1}(P_{p+q+1})\subseteq\cap_i J^i = 0$. Thus $f\in\Ker \partial$ and $\cap_rA_r^{pq}=\Ker \partial\cap F_pC^{p+q}$.

It follows that $Z_\infty^{pq}=\dfrac{\Ker \partial\cap F_pC^{p+q} + F_{p+1}C^{p+q}}{F_{p+1}C^{p+q}}$ and  $e_\infty^{pq}=E_\infty^{pq}$.

By definition, $F_{-n}C^n=C^n$, and so $F_{-n}H^n=\dfrac{\Ker d_{n+1}^*}{\im d_{n}^*}=\gExt^{n}_A(S,A)$ for any $n$,

If $n\neq d$, 
then $F_iH^n/{F_{i+1}H^n} \cong E^{i, n-i}_{\infty}=0$.
Hence $$F_{-n}H^{n}=F_{-n+1}H^{n}=\cdots=F_iH^{n}=\dfrac{\Ker d_{n+1}^*\cap F_iC^{n}+\im d_{n}^*}{\im d_{n}^*}=\cdots$$
for all $i > -n$. It follows that, for any $i>-n$,
$$\Ker d_{n+1}^*=\Ker d_{n+1}^*\cap F_iC^{n}+\im d_{n}^*.$$
By degree-wise nilpotency again,
$\Ker d_{n+1}^*=\im d_{n}^*.$
Therefore, $H^n=0$, that is,  $\gExt_A^n(S,A)=0$ for all $n\neq d$.

If $n=d$, then $F_{-d+1}H^d=F_iH^d$ for all $i>-d$ as only $E_\infty^{-d,2d}\neq 0$. It follows that, for all $i>-d$,
$$\Ker d^*_{d+1}\cap F_{-d+1}C^d+\im d_d^*=\Ker d^*_{d+1}\cap F_i C^d+\im d_d^*.$$
Similarly, by the degree-wise nilpotency,
$$\Ker d^*_{d+1}\cap F_{-d+1}\gHom_A(P_d,A)+\im d_d^*=\im d_d^*.$$
Therefore $F_{-d+1}H^d=F_{-d+2}H^d=\cdots=0.$
Hence
$$F_{-d}H^d/F_{-d+1}H^d=H^d=\gExt^d_A(S,A)\cong E^{-d,2d}_\infty\cong\gExt_{\Grj A}^d(S,\Grj A)_{-d}$$
as right $S$-modules. Therefore, as right $A$-modules,
$$\gExt_A^d(S,A)\cong \gExt_{\Grj A}^d(S,\Grj A)_{-d}.$$
Since $\gExt_{\Grj A}^d(S,\Grj A)_{-d}$ is an invertible $S$-bimodule,
it follows from Theorem \ref{equivalent definitions of AS regular} that $A$ is a generalized AS regular algebra.
\end{proof}

The following proposition studies the relations between the generalized AS regular property of a graded classical Koszul quiver algebra and its Yoneda Ext algebra, see \cite[Theorem 5.1]{MV1} or \cite[Proposition 4.1]{MV2}. 


\begin{proposition}\label{Classical case about the relation of Yoneda and generalized AS regular}
Suppose that $A$ is a graded classical Koszul quiver algebra. Let $E(A)=\gExt_A^\bullet(S,S)$ be the Yoneda Ext algebra of $A$. Then for any graded simple $A$-module $N$ with $\pdim N=d$, the following are equivalent.
\begin{itemize}
\item[(1)] $N$ satisfies
$$\gExt_A^i(N,A)=\left\{
\begin{aligned}
&0,&i\neq d\\
&N', &i=d
\end{aligned}
\right.
$$
where $N'$ is a graded simple right  $A$-module.
\item[(2)] $\gExt_A^\bullet(N,S)$ is an injective $E(A)$-module.
\end{itemize}
\end{proposition}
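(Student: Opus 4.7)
The approach is to translate both conditions into statements about $\mathcal{E}(N)$ as an $E(A)$-module via Koszul duality. Since $A$ is a classical Koszul quiver algebra, $E(A)$ is itself a classical Koszul algebra, and by the classical Koszul duality $\mathcal{E} = \gExt_A^\bullet(-, S)$ sends the simple Koszul $A$-module $N$ to an indecomposable graded projective $E(A)$-module $\mathcal{E}(N)$, generated in cohomological degree $0$ and concentrated in cohomological degrees $0$ through $d = \pdim N$. So $\mathcal{E}(N)$ is always projective; the substantive content of (2) is that it is also injective.

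The plan is to relate the complex $\gHom_A(P_\bullet, A)$, where $P_\bullet \to N \to 0$ is the minimal linear projective resolution of $N$, to a minimal graded injective copresentation of $\mathcal{E}(N)$. Each $P_n$ is a finite direct sum of shifted indecomposable projective $A$-modules; applying $\gHom_A(-, A)$ gives a bounded cochain complex of finitely generated graded projective right $A$-modules whose cohomology groups are $\gExt_A^\bullet(N, A)$. Since $A^{op}$ is also a classical Koszul quiver algebra (by Corollary \ref{A is sqk iff Aop is}), applying the Koszul duality functor on the right side---followed by the graded $k$-linear dual that interchanges projective right $A$-modules with indecomposable graded injective $E(A)$-modules---converts $\gHom_A(P_\bullet, A)$ into a cochain complex that serves as a minimal graded injective copresentation of $\mathcal{E}(N)$, up to the appropriate bi-grading shifts.

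With this dictionary in hand, both directions follow. For (1) implies (2): if $\gExt_A^i(N, A) = 0$ for $i \neq d$ and $\gExt_A^d(N, A) = N'$ is a simple right $A$-module, then the injective copresentation of $\mathcal{E}(N)$ degenerates to a single indecomposable injective summand with simple socle labeled by $N'$, so $\mathcal{E}(N)$ coincides with its injective envelope and is thus injective. Conversely, if $\mathcal{E}(N)$ is injective, then its injective copresentation is concentrated in one place, forcing $\gExt_A^i(N, A) = 0$ for $i \neq d$, and the simple socle of the indecomposable projective-injective module $\mathcal{E}(N)$ pulls back under Koszul duality to a simple right $A$-module realizing $\gExt_A^d(N, A) = N'$.

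The main obstacle is to make the correspondence between the cochain complex $\gHom_A(P_\bullet, A)$ and the minimal injective copresentation of $\mathcal{E}(N)$ fully precise: this requires carefully tracking both the cohomological and internal gradings and the idempotent labelings on the two sides, and invoking the Koszul duality for $A^{op}$ (rather than $A$) to match projective right $A$-modules with indecomposable graded injective $E(A)$-modules via the appropriate graded $k$-dual. These details are the content of the classical argument in \cite[Theorem 5.1]{MV1}, whose proof applies verbatim under the present hypothesis that $A$ is a classical Koszul quiver algebra.
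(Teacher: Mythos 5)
The first thing to say is that the paper does not prove this proposition at all: it is imported from the literature, with the preceding sentence pointing to \cite[Theorem 5.1]{MV1} and \cite[Proposition 4.1]{MV2} and the remark that the argument there applies in this setting. Since your write-up also ends by deferring every substantive step to \cite[Theorem 5.1]{MV1}, what you actually establish coincides with what the paper establishes, namely nothing beyond the citation; to that extent your proposal is consistent with the paper.

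Read as a standalone argument, however, your sketch has a gap at precisely the step you flag as ``the main obstacle,'' and the mechanism you name for closing it does not do what you claim. The Koszul duality functor for $A^{op}$ sends a projective right $A$-module $e_iA(m)$ to a \emph{simple} module concentrated in homological degree $0$ (since $\gExt^{\bullet}_{A^{op}}(e_iA,S)=\gHom_{A^{op}}(e_iA,S)$), so composing it with the graded $k$-linear dual turns the terms of $\gHom_A(P_\bullet,A)$ into simple $E(A)$-modules, not into indecomposable graded injectives; the indecomposable injective $E(A)$-module is the $k$-dual of $\gExt^{\bullet}_{A^{op}}(e_iA/e_iJ,S)$, i.e.\ it arises from the simple top of $e_iA$, not from $e_iA$ itself. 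Hence the assertion that these two functors ``convert $\gHom_A(P_\bullet,A)$ into a minimal graded injective copresentation of $\mathcal{E}(N)$'' is not a consequence of anything you have set up --- it is essentially equivalent to the proposition being proved. (Note also that when (1) holds the complex $\gHom_A(P_\bullet,A)$ is exact except at the top spot, so it is a backwards projective resolution of $N'$; identifying a suitably transformed version of it with a coresolution of $\mathcal{E}(N)$, rather than of some dual of $N'$, is exactly the bigraded socle-by-socle bookkeeping that \cite[Theorem 5.1]{MV1} carries out.) Your argument is therefore sound only insofar as it is read as a citation of that result, which is how the paper itself treats the proposition.
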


Now we are ready to give the main result in this section. Recall that an $\mathbb{N}$-graded algebra $A$ is called basic if the degree $0$ part of $A/J$ is a finite direct sum of $k$.

\begin{theorem}\label{Char-generalized Koszul AS-regular algebra}
Suppose that $A$ is a basic locally finite Koszul algebra of global dimension $d$. Let $E(A)=\gExt_A^\bullet(S,S)$ be the Yoneda Ext algebra of $A$. Then the following are equivalent.
\begin{itemize}
\item[(1)] $A$ is a generalized AS regular algebra of dimension $d$.
\item[(2)] $E(A)$ is a self-injective algebra.
\end{itemize}
\end{theorem}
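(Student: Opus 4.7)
The plan is to reduce the equivalence to the classical Koszul setting over $\Grj A$ and then invoke Proposition \ref{Classical case about the relation of Yoneda and generalized AS regular}. First, by Theorem \ref{A is generalized AS regular iff GrA is}, condition (1) is equivalent to $\Grj A$ being generalized AS regular of dimension $d$. By Theorem \ref{main-resut}, $\Grj A$ is a classical Koszul algebra, and since $A$ is basic, $(\Grj A)_0 \cong A/J$ is a direct sum of copies of $k$, so $\Grj A$ is a graded classical Koszul quiver algebra in the sense of \cite{MV1}. Moreover, applying $E(-)$ to the isomorphism $E(E(A)) \cong \Grj A$ and using that $E(A)$ is itself classical Koszul so that $E(E(E(A))) \cong E(A)$ (since the associated graded with respect to $J_{E(A)} = E(A)_{\geqslant 1}$ recovers $E(A)$), I obtain $E(A) \cong E(\Grj A)$ as graded algebras. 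Therefore condition (2) is equivalent to $E(\Grj A)$ being self-injective.

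Next I would apply Proposition \ref{Classical case about the relation of Yoneda and generalized AS regular} to $\Grj A$ simple by simple. The key identification is that for every graded simple $\Grj A$-module $N$, the standard classical Koszul computation yields $\gExt^\bullet_{\Grj A}(N, A/J) \cong E(\Grj A)\, e_N$ as a left $E(\Grj A)$-module, where $e_N \in E(\Grj A)_0 \cong (A/J)^{\op}$ is the primitive idempotent corresponding to $N$. That is, $\gExt^\bullet_{\Grj A}(N, A/J)$ is precisely the indecomposable projective left $E(\Grj A)$-module indexed by $N$. Consequently, the condition ``$\gExt^\bullet_{\Grj A}(N, A/J)$ is injective over $E(\Grj A)$ for every simple $N$'' is exactly self-injectivity of $E(\Grj A)$.

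Finally, I would piece together the equivalence. If $E(\Grj A)$ is self-injective, then for every graded simple $\Grj A$-module $N$ with $\pdim N = d$, the proposition gives that $\gExt^i_{\Grj A}(N, \Grj A) = 0$ for $i \neq d$ and $\gExt^d_{\Grj A}(N, \Grj A)$ is a graded simple right module. Since $E(\Grj A)$ is a finite-dimensional Koszul self-injective algebra with top homological degree $d$, a Nakayama-permutation argument forces $\pdim N = d$ for every graded simple $N$ and shows that $N \mapsto \gExt^d_{\Grj A}(N, \Grj A)$ realizes the bijection between graded simple $\Grj A$-modules and graded simple $(\Grj A)^{\op}$-modules; Theorem \ref{equivalent definitions of AS regular} then yields generalized AS regularity of $\Grj A$. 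The reverse implication is immediate from the proposition. The main technical obstacle will be carefully matching the two gradings (homological on $E(\Grj A)$ versus $J$-adic on $\Grj A$) in the identification $\gExt^\bullet_{\Grj A}(N,A/J) \cong E(\Grj A) e_N$, and verifying the Nakayama-permutation step that upgrades the per-simple conclusion of the proposition to the bijection in Definition \ref{definition of GAS regluar}(3).
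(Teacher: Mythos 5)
Your route is exactly the paper's: the published proof is a one\-line citation of Theorem \ref{A is generalized AS regular iff GrA is}, Theorem \ref{A sqk implies Gr A Koszul} and Proposition \ref{Classical case about the relation of Yoneda and generalized AS regular}, and you reconstruct precisely that reduction. The details you supply are genuinely needed and correct: the identification $E(A)\cong E(\Grj A)$ (via $E(E(E(A)))\cong \Grj E(A)\cong E(A)$, or directly from Corollary \ref{minimal projecitve resolution of filtration module}) is indispensable because Proposition \ref{Classical case about the relation of Yoneda and generalized AS regular} speaks about the Yoneda algebra of the quiver algebra $\Grj A$ while statement (2) concerns $E(A)$; and the identification $\gExt^\bullet_{\Grj A}(N,A/J)\cong E(\Grj A)e_N$ with the indecomposable projectives (using that $A$ is basic) is the right way to convert ``injective for every simple $N$'' into self\-injectivity.

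The one step that does not go through as you state it is the claim that self\-injectivity of $E(\Grj A)$ forces $\pdim N = d$ for \emph{every} graded simple $N$ via a Nakayama\-permutation argument. Proposition \ref{Classical case about the relation of Yoneda and generalized AS regular} is a statement per simple module, with $d=\pdim N$ allowed to vary with $N$, and self\-injectivity alone does not equalize these numbers: if $B_1$, $B_2$ are basic classically Koszul AS regular algebras of global dimensions $1$ and $2$ (say $\Grj A$ for $A=k[x]\times k[x,y]$), then $E(B_1\times B_2)=E(B_1)\times E(B_2)$ is a product of Frobenius algebras, hence self\-injective, while the simples over the two factors have projective dimensions $1$ and $2$, so condition (2) of Definition \ref{definition of GAS regluar} fails for the uniform $d=2$. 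The Nakayama permutation only pairs $\mathrm{top}(Ee_j)$ with $\soc(Ee_j)$, which sits in degree $\pdim S_j$; it does not force these top degrees to agree across blocks. So either an indecomposability (connected quiver) hypothesis is needed, or the uniform vanishing must be extracted from some other input. To be fair, the paper's own proof is silent on exactly this point, so the gap is inherited rather than introduced; but as written your ``Nakayama\-permutation argument forces $\pdim N=d$'' is a step that would fail, and you should either add the hypothesis that $A/J$ is simple Artinian up to one block per connected component with equal dimensions, or restrict to the case where $\Grj A$ is indecomposable as a ring. Once uniform projective dimension is secured, your remaining bookkeeping (simple socles giving the bijection of Definition \ref{definition of GAS regluar}(3), then Theorem \ref{equivalent definitions of AS regular}) is fine.
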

\begin{proof}
It follows from Theorem \ref{A is generalized AS regular iff GrA is}, Proposition \ref{Classical case about the relation of Yoneda and generalized AS regular} and Theorem \ref{A sqk implies Gr A Koszul}.
\end{proof}

\section*{Acknowledgements} This research is partially supported by  the National Key Research and Development Program of China (Grant No. 2020YFA0713200) and the National Science Foundation of China (Grant No. 11771085).

\thebibliography{plain}
\bibitem[AF]{AF} F. W. Anderson and K. R. Fuller, Rings and Categories of Modules, GTM 13, New York Springer-Verlag, 1973
\bibitem[Ber]{Ber} R. Berger, Koszulity for nonquadratic algebras, J. Algebra 239 (2001), 705-734.
\bibitem[BGS1]{BGS1}A. Beilinson, V. Ginzburg, and W. Soergel, Koszul duality patterns in representation theory, J. Amer. Math. Soc. 9 (1996), 473--527.
\bibitem[BGS2]{BGS2} A. Beilinson, V. Ginzburg, and V. Schechtman, Koszul duality, J. Geom. Phys. 5 (1998), 317--350.
\bibitem[CPS]{CPS} E. Cline, B. Parshall, and L. Scott, Graded and ungraded Kazhdan-Lusztig theories. Algebraic groups and Lie groups, 105--125, Austral. Math. Soc. Lect. Ser., 9, Cambridge Univ. Press, Cambridge, 1997.
\bibitem[Fr{\"o}]{Fr} R. Fr{\"o}berg, Koszul algebras, in: Advances in Commutative Ring Theory (Fez, 1997), Lecture Notes in Pure and Appl. Math., 205, Dekker, New York, 1999, 337--350.
\bibitem[GM1]{GM1} E. L. Green, R. Martin{\'e}z-Villa, Koszul and Yoneda algebras, Canad. Math. Soc. Conf. Proc. 18 (1996), 247--298.
\bibitem[GM2]{GM2} E. L. Green, R. Martin{\'e}z-Villa, Koszul and Yoneda algebras \uppercase\expandafter{\romannumeral2}, Canad. Math. Soc. Conf. Proc. 24 (1998), 227--244.
\bibitem[GMMZ]{GMMZ} E. L. Green, E.N. Marcos, R. Martin{\'e}z-Villa, P. Zhang, D-Koszul algebras, J. Pure Appl. Algebra 193 (2004), 141--162.
\bibitem[GRS]{GRS} E. L. Green, Idun Reiten, and \O. Solberg, Dualities on generalized Koszul algebras, Mem. Amer. Math. Soc. 159 (2002), no. 754.
\bibitem[HY]{HY} J.-W. He, Y. Ye, On the Yoneda-Ext algebras of semiperfect algebras, Alg. Colloq. 15  (2008), 207--222.
\bibitem[Lam]{L} T. Y. Lam, A first course in noncommutative rings, Second edition, Springer, 2001.
\bibitem[Li1]{Li1} L.-P. Li, A generalized Koszul theory and its application, Trans. Amer. Math. Soc. 366 (2014), 931--977.
\bibitem[Li2]{Li2} L.-P. Li, A generalized Koszul theory and its relation to the classical theory, J. Algebra 420 (2014), 217--241.
\bibitem[L{\"u}]{L3} J.-F. L{\"u}, On modules with d-Koszul-type submodules, Acta Math. Sin. (Engl. Ser.) 25 (2009), 1015--1030.
\bibitem[LHL]{LHL} J.-F. L{\"u}, J.-W. He, D.-M. Lu, Piecewise-Koszul algebras, Sci. China Ser. A 50 (2007),  1795--1804.
\bibitem[LO]{LO}H.-S. Li, F. van Oystaeyen, Zariskian Filtrations, Kluwer Academic Publishers, K-Monographs in Mathematics, V. 2, Springer Science \& Business Media, B.V., Berlin, 1996.
\bibitem[Lof]{Lof}  C. L$\ddot{o}$fwall, On the subalgebra generated by the one-dimensional elements in the Yoneda ext-algebra, Lecture Notes in Mathematics, V. 1183, Springer, 1986, pp. 291--
338.
\bibitem[Mac]{ML} S. MacLane, Homology, Grundlehren der mathematischen Wissenschaften, vol. 114, Springer Verlag, 1963.
\bibitem[Ma]{Ma} D. Madsen, On a common generalization of Koszul duality and tilting equivalence, Adv. Math. 227 (2011),  2327--2348.
\bibitem[Man]{Man} Y. I. Manin, Some remarks on Koszul algebras and quantum groups,
Ann. Inst. Fourier 37 (1987), 191--205.
\bibitem[MM]{MM} H. Minamoto, I. Mori, The structure of AS-Gorenstein algebras, Adv. Math. 226 (2011), 4061--4095.
\bibitem[MOS]{MOS} V. Mazorchuk, S. Ovsienko, C. Stroppel, Quadratic duals, Koszul dual functors, and applications, Trans. Amer. Math. Soc. 361 (2009), 1129--1172.
\bibitem[MS]{MS} R. Martin{\'e}z-Villa, \O. Solberg, Artin-Schelter regular algebras and categories, J. Pure Appl. Algebra 215 (2011), 546--565.
\bibitem[MV1]{MV1} R. Martin{\'e}z-Villa, Graded, Selfinjective, and Koszul algebras, J. Algebra 215 (1999), 34--72.
\bibitem[MV2]{MV2}  R. Martin{\'e}z-Villa, Koszul algebras and the Gorenstein condition, Representations of algebras (S$\tilde{a}$o Paulo, 1999), 135--156, Lecture Notes in Pure and Appl. Math., 224, Dekker, New York, 2002.
\bibitem[MV3]{MV3} R. Martin{\'e}z-Villa, Introduction to Koszul algebras, Rev. Un. Mat. Argentina 48 (2007),  67--95.
\bibitem[MZ]{MZ} R. Martin{\'e}z-Villa, D. Zacharia, Approximations with modules having linear resolutions, J. Algebra 266 (2003), 671--697.
\bibitem[NO]{NO} C. N$\breve{\mathrm{a}}$st$\breve{\mathrm{a}}$sescu, F. van Oystaeyen, Graded ring theory, North-Holland Math. Library, 28. North-Holland Publishing Co., Amsterdam-New York, 1982.
\bibitem[Pri]{Pri} S. B. Priddy, Koszul resolutions, Trans. Amer. Math. Soc. 152 (1970), 39--60.
\bibitem[RR1]{RR1} M. L. Reyes, D. Rogalski, Graded twisted Calabi-Yau algebras are generalized Artin-Schelter regular, Nagoya Math. J. 245 (2022), 100--153.
\bibitem[RR2]{RR2} M. L. Reyes, D. Rogalski, Growth of graded twisted Calabi-Yau algebras, J. Algebra 539 (2019), 201--259.
\bibitem[Sj]{Sj} G. Sj{\"o}din, A set of generators for $\Ext_R^\bullet(k, k)$, Math. Scand. 38 (1976), 1--12.
\bibitem[Sm]{Sm} S. P. Smith, Some finite-dimensional algebras related to elliptic curves, Representation theory of algebras and related topics (Mexico City, 1994), 315--348, CMS Conf. Proc., 19, Amer. Math. Soc., Providence, RI, 1996.
\bibitem[Wei]{W} C. A. Weibel, An introduction to homological algebra, Cambridge University Press, 1994.
\bibitem[Wo]{Wo} D. Woodcock, Cohen-Macaulay complexes and Koszul rings, J. London Math. Soc. (2) 57 (1998), 398--410.
\end{document}